%
\documentclass[10pt]{article}
%
%
\pagestyle{plain}
\textwidth=150mm
\advance\oddsidemargin by -1.0cm
\advance\evensidemargin by -1.0cm
\textheight=225mm
\advance\topmargin by -1.0cm
%
%
\usepackage{latexsym,amssymb,yfonts} %
\usepackage{amsmath}
\usepackage{amsthm}
\usepackage{mathtools}
\usepackage{enumerate}
\usepackage{psfrag}
\usepackage{graphicx}
\usepackage[colorlinks=true,linkcolor=black,urlcolor=black,citecolor=black,bookmarks]{hyperref}
\usepackage{multirow}
\usepackage{booktabs}
\usepackage{arydshln}
%
%
%
%
\numberwithin{equation}{section}
%
%

\newtheorem{theorem}{Theorem}[subsection]
\newtheorem{prop}{Proposition}[subsection]

\newtheorem{cor}{Corollary}[subsection]

\newtheorem{lem}{Lemma}[subsection]

\newtheorem{lemma}{Lemma}[subsection]

\theoremstyle{definition}
\newtheorem{df}{Definition}[subsection]

\newtheorem{definition}{Definition}[subsection]

\newtheorem{ex}{Example}[subsection]

\theoremstyle{remark}
\newtheorem{rem}{Remark}[subsection]

\newtheorem*{notation}{Notation}
\newtheorem*{remark*}{Remark}

\numberwithin{table}{section}
%
%
\newcommand{\bdm}{\begin{displaymath}}
\newcommand{\edm}{\end{displaymath}}
\newcommand{\be}{\begin{equation}}
\newcommand{\ee}{\end{equation}}

\newcommand{\cyclic}[1]{\stackrel{\scriptsize #1}{\mathfrak{S}}}
\newcommand{\R}{\mathbb{R}}

\renewcommand{\H}{\mathcal{H}}
\newcommand{\V}{\mathcal{V}}

\newcommand{\so}{\ensuremath{\mathfrak{so}}}
\newcommand{\SO}{\ensuremath{\mathrm{SO}}}

\newcommand{\spin}{\ensuremath{\mathfrak{spin}}}
\newcommand{\ad}{\mathrm{ad}\,}
\newcommand{\dd}{\mathrm{d}}
\newcommand\extalg{%
  \newlength{\len}%
  \settoheight{\len}{V}%
  \mathbin{%
    \resizebox{0.93\len}{0.93\len}{$\wedge$}%
    \kern-0.1em%
  }}%
\newcommand{\intprod}{\mathbin{\hbox to 0.7ex{%
      \kern-0.3ex
      \vrule height0.0777ex width0.971ex depth0ex
      \kern-0.055ex
      \vrule height1.165ex width0.0777ex depth0ex\hss}}%
}%

\newcommand{\N}{\mathbb{N}}
\newcommand{\Cl}{C\!\ell}
%
%
\newcommand{\raa}[1]{\renewcommand{\arraystretch}{#1}}
%
\begin{document}

\title{Homogeneous non-degenerate  $3$-$(\alpha,\delta)$-Sasaki manifolds
and submersions over quaternionic K\"ahler spaces}

\author{Ilka Agricola, Giulia Dileo, and Leander Stecker}
\date{}

\maketitle

\begin{abstract}
We show that every $3$-$(\alpha,\delta)$-Sasaki manifold of dimension $4n + 3$ admits a locally defined
Riemannian submersion over a quaternionic K\"ahler manifold of scalar curvature $16n(n+2)\alpha\delta$.
In the non-degenerate case we describe all homogeneous $3$-$(\alpha,\delta)$-Sasaki manifolds
fibering over symmetric Wolf spaces and over their non-compact dual symmetric
spaces. If $\alpha\delta> 0$, this yields a complete classification of homogeneous
$3$-$(\alpha,\delta)$-Sasaki manifolds. For $\alpha\delta< 0$, we  provide a general construction of
homogeneous  $3$-$(\alpha,\delta)$-Sasaki manifolds fibering over non-symmetric Alekseevsky spaces, the lowest
possible dimension of such a manifold being $19$.
\end{abstract}


\tableofcontents


\pagestyle{headings}


\phantom{x}

\vspace{1cm}

\medskip
\noindent
{\small
{\em MSC (2010)}: primary 53B05, 53C15, 53C25, 53D10; secondary 53C27, 32V05, 22E25.

\noindent
{\em Keywords and phrases}: Almost $3$-contact metric manifold; $3$-Sasaki manifold;
$3$-$(\alpha,\delta)$-Sasaki manifold; Riemannian homogeneous space; canonical connection; Riemannian submersion;
quaternionic K\"ahler manifold; Wolf space; Alekseevsky space; Nomizu map.}


\section{Introduction and basic notions}
%
\subsection{Introduction}
%
Sasaki manifolds have been studied since the 1970s as an odd dimensional counterpart to K\"ahler geometry. Similarly, $3$-Sasaki manifolds are considered the $(4n+3)$-dimensional analogue to hyper-K\"ahler (hK) geometry. However, while these geometries are linked via the hK cone of a $3$-Sasaki manifold, $3$-Sasaki geometry also connects to another $4n$-dimensional geometry, namely quaternionic K\"ahler (qK) manifolds. Initially shown in the regular case by Ishihara and in full generality by C.~Boyer, K.~Galicki and B.~Mann in '94, every $3$-Sasaki manifold locally admits a fibration over a qK orbifold \cite{BGM94}. This led to the classification of all homogeneous $3$-Sasaki manifolds. The reverse construction is given by taking the Konishi bundle of a positive scalar curvature qK space, i.e.\ the orthonormal frame bundle of the quaternionic structure \cite{konishi}. For qK manifolds with negative scalar curvature one does not obtain a $3$-Sasaki manifold but a so-called pseudo $3$-Sasaki structure \cite{tanno96}. This notion, however, did not gather as much traction since it comes with a metric of semi-Riemannian signature $(4n, 3)$.

More recently the first two authors investigated Riemannian almost $3$-contact metric manifolds by means of connections with torsion \cite{AgrDil}. They found necessary and sufficient conditions for the existence of compatible connections. Along their investigations, they discovered the more specific class of $3$-$(\alpha,\delta)$-Sasaki manifolds connecting many examples on which partial results were known previously. In particular, they showed that pseudo $3$-Sasaki structures can be turned into \emph{negative}  $3$-$(\alpha,\delta)$-Sasaki manifolds (i.\,e.~with $\alpha\delta<0$).

This paper aims to connect both worlds and presents $3$-$(\alpha,\delta)$-Sasaki geometry as the go-to structure above any qK space. We quickly review all necessary notions involving $3$-$(\alpha,\delta)$-Sasaki structures in Section 1. Using results by R.~Cleyton, A.~Moroianu and U.~Semmelmann \cite{CleyMorSemm} we obtain a locally defined Riemannian submersion over a qK space establishing the canonical connection as the link between both geometries. This is done in Section 2. In the $3$-Sasaki case we recover the result of Boyer, Galicki, Mann.
We further show that the scalar curvature on the base is a positive multiple of $\alpha\delta$. Thus, for negative and degenerate $3$-$(\alpha,\delta)$-Sasaki manifolds we obtain submersions onto qK spaces of negative scalar curvature, respectively hK spaces. This suggests to investigate non-degenerate homogeneous $3$-$(\alpha,\delta)$-Sasaki manifolds by looking at homogeneous qK manifolds of non-vanishing scalar curvature. Section 3 is therefore devoted to a hands on construction of homogeneous $3$-$(\alpha,\delta)$-Sasaki spaces over all known homogeneous qK manifolds. This yields a construction over symmetric Wolf spaces, deforming the description given in \cite{Draperetall}
(see also \cite[Theorem 4]{Bielawski}), and by similar means their non-compact duals. Additionally, homogeneous $3$-$(\alpha,\delta)$-Sasaki manifolds over Alekseevsky spaces are constructed using a description of the latter given by V.\ Cort\'es in \cite{CortesNewConstr}. We provide detailed descriptions of the $7$-dimensional Aloff-Wallach space, its negative counterpart fibering over the $4$-dimensional Wolf space $\mathrm{SU}(3)/S(\mathrm{U}(2)\times\mathrm{U}(1))$, respectively its non-compact dual, as well as of the homogeneous $3$-$(\alpha,\delta)$-Sasaki space $\hat{\mathcal{T}}(1)$ in dimension $19$ sitting above the non-symmetric Alekseevsky space $\mathcal{T}(1)$. In Section 4 we compute the Nomizu map associated to the canonical connection, the necessary tool for any further investigation of these spaces. In the symmetric base case we find the Nomizu map of the Levi-Civita connection as well.
%

\subsection{Review of $3$-$(\alpha,\delta)$-Sasaki manifolds and their basic properties}
%
We review some basic definitions and properties on almost contact metric manifolds.
This serves mainly as a  reference.

An \emph{almost contact metric structure} on a $(2n+1)$-dimensional differentiable manifold
$M$ is a quadruple $(\varphi,\xi,\eta,g)$, where $\varphi$ is a $(1,1)$-tensor
field, $\xi$ a vector field, $\eta$ a $1$-form, $g$ a Riemannian metric, such that
\begin{gather*} 
\varphi^2=-I+\eta\otimes \xi,\quad  \eta(\xi)=1,\quad \varphi (\xi) =0,\quad \eta \circ \varphi =0,\\
g(\varphi X,\varphi Y)=g(X,Y)-\eta (X) \eta(Y)\quad \forall X,Y\in{\frak X}(M).
\end{gather*}
It follows that $\varphi$ has rank $2n$ and the tangent bundle of $M$ splits as
$TM=\mathcal{H}\oplus\langle\xi\rangle$, where $\H$ is the
$2n$-dimensional distribution defined by
$\H=\mathrm{Im}(\varphi)=\ker\eta=\langle \xi\rangle^\perp$. In particular, $\eta=g(\cdot,\xi)$.
The vector field $\xi$ is called the \emph{characteristic} or \emph{Reeb vector field}.
The almost contact metric structure is said to be
\emph{normal} if $ N_\varphi\coloneqq[\varphi,\varphi]+d\eta\otimes\xi$ vanishes,
where $[\varphi,\varphi]$ is the Nijenhuis torsion of $\varphi$ \cite{BLAIR}.

An \emph{$\alpha$-Sasaki manifold} is defined as a normal almost contact metric manifold
such that
$
d\eta\, =\, 2\alpha\Phi$, $\alpha\in\R^*,
$
where $\Phi$ is the fundamental $2$-form defined by
$\Phi(X,Y)=g(X,\varphi Y)$. For $\alpha=1$, this is a \emph{Sasaki manifold}. The $1$-form
$\eta$ of an $\alpha$-Sasaki structure is a \emph{contact form}, in the sense that
$\eta\wedge (d\eta)^n\ne 0$ everywhere on $M$. The Reeb vector field is always Killing.

An  \emph{almost $3$-contact metric manifold}  is a differentiable  manifold $M$ of dimension
$4n+3$ endowed with three almost contact metric structures $(\varphi_i,\xi_i,\eta_i,g)$,
$i=1,2,3$, sharing the same Riemannian metric $g$, and satisfying the following compatibility
relations
%
\bdm
\varphi_k=\varphi_i\varphi_j-\eta_j\otimes\xi_i=-\varphi_j\varphi_i+\eta_i\otimes\xi_j,\quad
\xi_k=\varphi_i\xi_j=-\varphi_j\xi_i, \quad
\eta_k=\eta_i\circ\varphi_j=-\eta_j\circ\varphi_i
\edm
%
for any even permutation $(ijk)$ of $(123)$ \cite{BLAIR}.
The tangent bundle of $M$ splits into the orthogonal sum $TM=\H\oplus\V$, where $\H$ and
$\V$ are respectively the \emph{horizontal} and the \emph{vertical} distribution, defined by
\[
\H \, \coloneqq\, \bigcap_{i=1}^{3}\ker\eta_i,\qquad
\V\, \coloneqq\, \langle\xi_1,\xi_2,\xi_3\rangle.
\]
In particular $\H$ has rank $4n$ and the three Reeb vector
fields $\xi_1,\xi_2,\xi_3$ are orthonormal.
The manifold is said to  be \emph{hypernormal} if each  almost contact metric structure
$(\varphi_i,\xi_i,\eta_i,g)$ is normal.
We denote an almost $3$-contact metric manifold by $(M,\varphi_i,\xi_i,\eta_i, g)$, understanding
that the index is running from $1$ to $3$.

One of the most interesting classes of almost $3$-contact metric manifolds is given by
\emph{$3$-$\alpha$-Sasaki manifolds}, for which each of the three structures is $\alpha$-Sasaki. For
$\alpha=1$, this is just the definition of a \emph{$3$-Sasaki manifold}. As a  comprehensive introduction
to Sasaki and $3$-Sasaki geometry, we refer to \cite{Boyer&Galicki}.
In the recent paper \cite{AgrDil} the new class of $3$-$(\alpha,\delta)$-Sasaki manifolds
was introduced, generalizing $3$-$\alpha$-Sasaki manifolds.
\begin{df}\label{df.3ad-Sasaki}
An almost $3$-contact metric manifold $(M,\varphi_i,\xi_i,\eta_i,g)$ is called a
\emph{$3$-$(\alpha,\delta)$-Sasaki manifold} if it satisfies
\begin{equation}\label{differential_eta}
d\eta_i=2\alpha\Phi_i+2(\alpha-\delta)\eta_j\wedge\eta_k
\end{equation}
for every even permutation $(ijk)$ of $(123)$, where $\alpha\neq 0$ and
$\delta$ are real constants.
A $3$-$(\alpha,\delta)$-Sasaki manifold is called \emph{degenerate} if
$\delta=0$ and \emph{non-degenerate} otherwise.
Non-degenerate $3$-$(\alpha,\delta)$-Sasaki manifolds will be distinguished into
\emph{positive} and \emph{negative} ones, depending on whether  $\alpha\delta>0$ or $\alpha\delta<0$.
\end{df}
\begin{rem}
Recall that the distinction into degenerate, positive, and negative
$3$-$(\alpha,\delta)$-Sasaki manifolds stems from their behaviour under
 \emph{$\H$-homothetic deformations} \cite[Section 2.3]{AgrDil}:
%
\bdm
\eta_i'=c\eta_i,\quad \xi_i'=\frac{1}{c}\xi_i,\quad \varphi_i'=\varphi_i,\quad
g'=ag+b\sum_{i=1}^3\eta_i\otimes\eta_i \qquad a>0,\, c^2=a+b>0.
\edm
The deformed structure $(\varphi',\xi_i',\eta',g')$ turns out to be $3$-$(\alpha',\delta')$-Sasaki with
$\alpha'=\alpha c/a$, $\delta'=\delta/c$.
In particular, $\H$-homothetic deformations preserve the class of degenerate
$3$-$(\alpha,\delta)$-Sasaki manifolds. In the non-degenerate case the sign of the
product $\alpha\delta$ is also preserved, which justifies the distinction between the positive and
negative case stated in the definition above. In fact a $3$-$(\alpha,\delta)$-Sasaki manifold
is positive if and only if it is $\H$-homothetic to a $3$-Sasaki manifold, and
negative if and only if it is $\H$-homothetic to a $3$-$(\tilde\alpha,\tilde\delta)$-Sasaki
manifold with $\tilde\alpha=-\tilde\delta=1$.
\end{rem}
We recall some basic properties of $3$-$(\alpha,\delta)$-Sasaki manifolds whose proofs can be found in
\cite{AgrDil}. Any $3$-$(\alpha,\delta)$-Sasaki manifold is shown to be hypernormal,
thus generalizing Kashiwada's theorem \cite{kashiwada}. Hence, for $\alpha=\delta$ one has a
$3$-$\alpha$-Sasaki manifold. Each Reeb vector field $\xi_i$ is Killing and it is an infinitesimal
automorphism of the horizontal distribution $\H$, i.e. $d\eta_i(X,\xi_j)=0$ for every
$X\in\H $ and $i,j=1,2,3$. The vertical distribution $\V$ is integrable with totally
geodesic leaves.
In particular, the commutators of the Reeb vector fields are purely vertical and for every even permutation $(ijk)$ of $(123)$ they are given by
\bdm
[\xi_i,\xi_j]=2\delta\xi_k.
\edm
Meanwhile, the vertical part of commutators of horizontal vector fields is encoded by the fundamental form,
as is shown in the following useful lemma:
\begin{lemma}\label{VComm}
For two horizontal vectors $X,Y\in\mathcal{H}$ we have
\[
[X,Y]_\mathcal{V}=-2\alpha\sum_{i=1}^3\Phi_i(X,Y)\xi_i.
\]
\end{lemma}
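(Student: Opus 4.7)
The plan is to extract the vertical component of $[X,Y]$ by pairing with each Reeb vector field $\xi_i$, using the fact that $\{\xi_1,\xi_2,\xi_3\}$ is an orthonormal basis of $\mathcal{V}$. Since $\eta_i = g(\,\cdot\,,\xi_i)$, the vertical part of $[X,Y]$ is
\[
[X,Y]_{\mathcal{V}} \;=\; \sum_{i=1}^{3} g([X,Y],\xi_i)\,\xi_i \;=\; \sum_{i=1}^{3}\eta_i([X,Y])\,\xi_i,
\]
so it suffices to show $\eta_i([X,Y]) = -2\alpha\,\Phi_i(X,Y)$ for each $i$.

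Both sides of the claimed identity are tensorial (pointwise) in $X$ and $Y$, so I may replace $X,Y$ by local extensions. The natural choice is to extend them to \emph{horizontal} vector fields near the given point, which is always possible since $\mathcal{H}$ is a smooth subbundle; then $\eta_i(X)=\eta_i(Y)=0$ identically. The standard formula for the exterior derivative of a $1$-form then gives
\[
d\eta_i(X,Y) \;=\; X\bigl(\eta_i(Y)\bigr) - Y\bigl(\eta_i(X)\bigr) - \eta_i\bigl([X,Y]\bigr) \;=\; -\eta_i\bigl([X,Y]\bigr).
\]

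Now I substitute the defining equation \eqref{differential_eta}. Because $X,Y \in \mathcal{H} = \bigcap_\ell \ker\eta_\ell$, the contribution $2(\alpha-\delta)\,(\eta_j\wedge\eta_k)(X,Y)$ vanishes, leaving $d\eta_i(X,Y) = 2\alpha\,\Phi_i(X,Y)$. Combined with the previous display, this yields $\eta_i([X,Y]) = -2\alpha\,\Phi_i(X,Y)$, and summing over $i$ gives the stated formula.

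There is no real obstacle here: the only point requiring a brief remark is the legitimacy of extending $X,Y$ to horizontal vector fields, which is routine since $\mathcal{H}$ is a smooth distribution and the resulting identity is $C^\infty(M)$-bilinear in $X,Y$. The computation uses only the definition of a $3$-$(\alpha,\delta)$-Sasaki structure and the relation $\eta_i = g(\,\cdot\,,\xi_i)$.
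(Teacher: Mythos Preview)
Your proof is correct and follows exactly the same approach as the paper: expand $[X,Y]_{\mathcal V}$ in the orthonormal basis $\{\xi_i\}$, use $d\eta_i(X,Y)=-\eta_i([X,Y])$ for horizontal $X,Y$, and then invoke \eqref{differential_eta} together with $\eta_j(X)=\eta_k(X)=0$. The paper compresses this into a single displayed line, while you spell out the tensoriality and extension argument, but the substance is identical.
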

\begin{proof}
Since the vertical distribution is spanned by the Reeb vector fields, we have
\[
[X,Y]_{\mathcal V}=\sum_{i=1}^3\eta_i([X,Y])\xi_i=-\sum_{i=1}^3d\eta_i(X,Y)\xi_i=-2\alpha\sum_{i=1}^3\Phi_i(X,Y)\xi_i.\qedhere
\]
\end{proof}
By the same argument $[X,Y]_\mathcal{V}=0$ if $X\in\mathcal{H}$ and $Y=\xi_j$, $j=1,2,3$, which is equivalent to the fact that $\mathrm{d}\eta_i(X,\xi_j)=0$, $i=1,2,3$. 

A remarkable property of $3$-$(\alpha,\delta)$-Sasaki manifolds is that they are
\emph{canonical almost $3$-contact metric manifolds}, in the sense of \cite{AgrDil}, which is equivalent
to the existence of a \emph{canonical connection}.

We recall here some basic facts about connections with totally skew-symmetric torsion---we
refer to \cite{Ag} for further details. A metric
connection $\nabla$ with torsion $T$ on a Riemannian manifold $(M,g)$ is said to have
\emph{totally skew-symmetric torsion},
or \emph{skew torsion} for short, if the $(0,3)$-tensor field $T$ defined by
\[T(X,Y,Z)=g(T(X,Y),Z)\]
is a $3$-form. The relation between $\nabla$ and the Levi-Civita connection
$\nabla^g$ is then given by
\bdm
\nabla_XY=\nabla^g_XY+\frac{1}{2}T(X,Y).
\edm
It is well-known that any Sasaki manifold $(M,\varphi,\xi,\eta,g)$ admits a \emph{characteristic connection},
i.\,e.~a unique metric connection $\nabla$ with skew torsion such that $\nabla\eta=\nabla\varphi=0$. Its torsion
is given by $T=\eta\wedge d\eta$ \cite{FrIv}. As a
consequence, a $3$-Sasaki manifold $(M,\varphi_i,\xi_i,\eta_i,g)$ cannot admit any metric connection
with skew torsion such that $\nabla\eta_i=\nabla\varphi_i=0$ for every $i=1,2,3$. By relaxing
the requirement on the parallelism of the structure tensor fields in a suitable way,
one can define a large class of almost $3$-contact metric
manifolds, called \emph{canonical}, including $3$-$(\alpha,\delta)$-Sasaki manifolds, and thus
$3$-Sasaki manifolds.

Any $3$-$(\alpha,\delta)$-Sasaki manifold $(M,\varphi_i,\xi_i,\eta_i,g)$ is canonical, in
the sense that it admits a unique metric connection $\nabla$ with skew torsion such that
\begin{equation}\label{canonical}
\nabla_X\varphi_i\, =\, \beta(\eta_k(X)\varphi _j -\eta_j(X)\varphi _k) \quad
\forall X\in{\frak X}(M)
\end{equation}
for every even permutation $(ijk)$ of $(123)$, where $\beta=2(\delta-2\alpha)$. The
covariant derivatives  of the other structure tensor fields are given by
\[\nabla_X\xi_i=\beta(\eta_k(X)\xi_j-\eta_j(X)\xi_k),\qquad\nabla_X\eta_i=\beta(\eta_k(X)\eta_j-\eta_j(X)\eta_k).\]
If $\delta=2\alpha$, then $\beta=0$ and the canonical connection parallelizes all the structure tensor fields.
Any $3$-$(\alpha,\delta)$-Sasaki manifold with $\delta=2\alpha$ is called \emph{parallel}. Notice that this is
a positive $3$-$(\alpha,\delta)$-Sasaki manifold.

The torsion $T$ of the canonical connection is given by
\begin{equation}\label{torsion01}
T\ =\ 2\alpha\sum_{i=1}^3\eta_i\wedge\Phi_i-2(\alpha-\delta)\eta_{123}\ =\
2\alpha \sum_{i=1}^3\eta_i\wedge \Phi^{\mathcal H}_i+2(\delta-4 \alpha)\,\eta_{123},
\end{equation}
where $\Phi^{\mathcal H}_i=\Phi_i+\eta_{jk}\in\Lambda^2({\mathcal H})$ is the horizontal part of the
fundamental $2$-form $\Phi_i$. Here we put $\eta_{jk}\coloneqq\eta_j\wedge\eta_k$ and $\eta_{123}\coloneqq\eta_1\wedge\eta_2\wedge\eta_3$.
In particular, for every $X,Y\in\frak{X}(M)$,
\begin{equation}\label{torsion02}
T(X,Y)=2\alpha\sum_{i=1}^3\{\eta_i(Y)\varphi_iX-\eta_i(X)\varphi_iY+\Phi_i(X,Y)\xi_i\}-2(\alpha-\delta)
\cyclic{i,j,k}\eta_{ij}(X,Y)\xi_k.
\end{equation}
The symbol $\cyclic{i,j,k}$ means the sum over all even permutations of
$(123)$.
The torsion of the canonical connection satisfies $\nabla T=0$.
The curvature properties of $3$-$(\alpha,\delta)$-Sasaki manifolds will be discussed in detail in a separate
publication \cite{ADS21}. We cite from there
without proof the following special result that will be needed in the following
section. It is a side result of a lengthy and non-trivial, but otherwise straightforward computation.
\begin{prop}[\cite{ADS21}]\label{curv-ids}
The curvature tensor $R$ of the canonical connection of a $3$-$(\alpha,\delta)$-Sasaki manifold
satisfies  for any $X,Y,Z\in\H $ and $i,j,k,l=1,2,3$ the identities
\begin{align}
R(X,\xi_i,Y,\xi_j)&=R(X,Y,Z,\xi_i)=R(\xi_i,\xi_j,\xi_k,X)=0,\label{0Curv}\\
R(\xi_i,\xi_j,\xi_k,\xi_l)&=-4\alpha\beta (\delta_{ik}\delta_{jl}-\delta_{il}\delta_{jk}),\label{1Curv}\\
R(\xi_i,\xi_j,X,Y)&=2\alpha\beta \Phi_k(X,Y),\label{offdiag}\\
R(X,Y,Z,\varphi_iZ)+R(X,Y,\varphi_jZ,\varphi_kZ)&=2\alpha\beta\Phi_i(X,Y)\|Z\|^2,\label{2Curv}
\end{align}
where in the last two identities $(ijk)$ is an even permutation of $(123)$.
\end{prop}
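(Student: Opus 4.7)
The plan is to compute $R$ directly from its definition, leveraging four structural ingredients: (a) the parallelism formulas $\nabla_X\xi_i=\beta(\eta_k(X)\xi_j-\eta_j(X)\xi_k)$ and $\nabla_X\varphi_i=\beta(\eta_k(X)\varphi_j-\eta_j(X)\varphi_k)$, which imply $\nabla_Z\xi_i=\nabla_Z\varphi_i=0$ for $Z\in\H$ and $\nabla_{\xi_l}\xi_m=\beta\sum_n\epsilon_{lmn}\xi_n$ on $\V$; (b) the bracket identities $[\xi_i,\xi_j]=2\delta\xi_k$ and $[X,Y]_\V=-2\alpha\sum_l\Phi_l(X,Y)\xi_l$ from \autoref{VComm}; (c) the fact that $[X,\xi_i]\in\H$ whenever $X\in\H$, which is immediate from $\xi_i$ being Killing together with $d\eta_j(\xi_i,\cdot)|_\H=0$; and (d) the reduced first Bianchi identity $\mathfrak{S}_{A,B,C}R(A,B,C,D)=\mathfrak{S}_{A,B,C}T(T(A,B),C,D)$ afforded by $\nabla T=0$.

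\textbf{Identities \eqref{0Curv} and \eqref{1Curv}.} For $X,Y,Z\in\H$, ingredient (a) collapses $R(X,Y)\xi_i$ to $-\nabla_{[X,Y]_\V}\xi_i\in\V$, yielding $R(X,Y,Z,\xi_i)=0$. For $R(X,\xi_i,Y,\xi_j)=0$ I check that each term of $R(X,\xi_i)\xi_j$ vanishes: $\nabla_X(\nabla_{\xi_i}\xi_j)=0$ since $\nabla_{\xi_i}\xi_j\in\V$ and $\nabla_X$ kills vertical vectors for horizontal $X$; $\nabla_{\xi_i}(\nabla_X\xi_j)=0$ since $\nabla_X\xi_j=0$; and $\nabla_{[X,\xi_i]}\xi_j=0$ by (a) and (c). For \eqref{1Curv}, substitution of $\nabla_{\xi_l}\xi_m$ and $[\xi_l,\xi_m]$ into the definition and contraction of $\epsilon$-symbols give $R(\xi_l,\xi_m)\xi_i=\beta(\beta-2\delta)(\delta_{il}\xi_m-\delta_{im}\xi_l)$; using $\beta=2(\delta-2\alpha)$, which implies $\beta-2\delta=-4\alpha$, this matches the stated formula. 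Since $R(\xi_i,\xi_j)\xi_k$ is vertical, the remaining case $R(\xi_i,\xi_j,\xi_k,X)=0$ of \eqref{0Curv} follows by pairing with $X\in\H$.

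\textbf{Identity \eqref{offdiag}.} Applying (d) to $(\xi_i,\xi_j,X,Y)$ with $X,Y\in\H$ and using the vanishings just established to eliminate the two cyclic companions reduces the identity to evaluating
\[
R(\xi_i,\xi_j,X,Y)=T(T(\xi_i,\xi_j),X,Y)+T(T(\xi_j,X),\xi_i,Y)+T(T(X,\xi_i),\xi_j,Y).
\]
Reading off $T(\xi_i,\xi_j)=2(\delta-4\alpha)\xi_k$, $T(\xi_j,X)=-2\alpha\varphi_jX$ and $T(X,\xi_i)=2\alpha\varphi_iX$ from \eqref{torsion02}, and using the quaternionic relation $\varphi_i\varphi_j=\varphi_k$ on $\H$, the right-hand side collapses to $4\alpha(\delta-2\alpha)\Phi_k(X,Y)=2\alpha\beta\Phi_k(X,Y)$.

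\textbf{Identity \eqref{2Curv} --- the main obstacle.} For $X,Y\in\H$ the Ricci identity applied to the $(1,1)$-tensor $\varphi_j$ (the choice of $\varphi_j$ rather than $\varphi_i$ is essential) together with (a) and (b) yields
\[
R(X,Y)\varphi_jZ-\varphi_jR(X,Y)Z=-\nabla_{[X,Y]_\V}\varphi_j=2\alpha\beta(\Phi_i(X,Y)\varphi_k-\Phi_k(X,Y)\varphi_i)Z.
\]
Taking the $g$-inner product with $\varphi_kZ$ and invoking, for $Z\in\H$, the horizontal identities $\varphi_j\varphi_kZ=\varphi_iZ$, $\|\varphi_kZ\|^2=\|Z\|^2$, and $g(\varphi_iZ,\varphi_kZ)=0$ (the last since $\varphi_iZ=\varphi_k\varphi_jZ$ on $\H$, so $g(\varphi_iZ,\varphi_kZ)=g(Z,\varphi_jZ)=0$) produces
\[
R(X,Y,\varphi_jZ,\varphi_kZ)+R(X,Y,Z,\varphi_iZ)=2\alpha\beta\Phi_i(X,Y)\|Z\|^2,
\]
which is the claim. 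The technical heart of the proposition lies in this quaternionic bookkeeping: only the right permutation isolates the Ricci identity on $\varphi_j$ in such a way that the cross term $g(\varphi_iZ,\varphi_kZ)$ vanishes, and the expansion of $(\nabla_{[X,Y]_\V}\varphi_j)(Z)$ via $\nabla_{\xi_l}\varphi_j=\beta(\delta_{il}\varphi_k-\delta_{kl}\varphi_i)$ must be carried out with care.
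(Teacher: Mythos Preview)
Your argument is correct. Note that the paper does not actually supply a proof of this proposition: it is quoted from \cite{ADS21} with the comment that it ``is a side result of a lengthy and non-trivial, but otherwise straightforward computation,'' so there is no in-paper proof to compare against. Your proof is a clean and efficient realization of that computation, and its organization---direct evaluation of $R(\cdot,\cdot)\xi_i$ for \eqref{0Curv} and \eqref{1Curv}, the reduced first Bianchi identity (available since $\nabla T=0$) for \eqref{offdiag}, and the Ricci identity for the tensor $\varphi_j$ for \eqref{2Curv}---is exactly the kind of structural shortcut one would hope for.

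One small correction that does not affect your conclusion: on $\H$ the quaternionic relation reads $\varphi_i=\varphi_j\varphi_k$ (for $(ijk)$ an even permutation), not $\varphi_i=\varphi_k\varphi_j$ as you wrote; indeed $\varphi_k\varphi_j=-\varphi_i$ on $\H$. Either identity yields $g(\varphi_iZ,\varphi_kZ)=0$, since in the first case $g(\varphi_j(\varphi_kZ),\varphi_kZ)=0$ by skew-symmetry of $\varphi_j$, and in the second the sign is irrelevant. You might also remark explicitly, when pairing $\varphi_j(R(X,Y)Z)$ with $\varphi_kZ$, that $R(X,Y)Z\in\H$ (which you have already established via $R(X,Y,Z,\xi_i)=0$), so that the horizontal identity $\varphi_j\varphi_k=\varphi_i$ applies cleanly.
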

%
%
%
%
%
%
%
%

\section{The Riemannian submersion over a quaternionic K\"ahler base}
%
\subsection{The canonical submersion}
%
In \cite{CleyMorSemm} the authors discuss the geometry of Riemannian manifolds admitting metric connections $\nabla^{\tau}$ with parallel skew torsion $\tau$ and reducible holonomy. This applies, in particular, to the canonical connection of $3$-$(\alpha,\delta)$-Sasaki manifolds. We shortly recall their notation.

Suppose the tangent space $TM$ decomposes under the action of the holonomy group $\mathrm{Hol}$ of $\nabla^\tau$ into a sum of irreducible representations $\mathfrak{v}_1,\dots,\mathfrak{v}_r,\mathfrak{h}_1,\dots,\mathfrak{h}_s$. Here an irreducible submodule is called vertical, adequately denoted by $\mathfrak{v}_j$, if the subspace of $\mathfrak{hol}$ acting purely on $\mathfrak{v}_j$ is trivial. Conversely, a subspace $\mathfrak{h}_a$ is called horizontal if the subspace $\mathfrak{k}_a=\mathfrak{so}(\mathfrak{h}_a)\cap\mathfrak{hol}\neq \{0\}$ of $\mathfrak{hol}$ acting purely on $\mathfrak{h}_a$ is non-trivial.

We need a slight generalization of the results obtained in \cite{CleyMorSemm}.
Suppose the tangent space decomposes into $TM=\mathfrak{v}_1\oplus\dots\oplus\mathfrak{v}_r\oplus\mathfrak{h}_1\oplus\dots\oplus\mathfrak{h}_s$ as before. Let $TM=\mathcal{V}_\Gamma\oplus\mathcal{H}_\Gamma$ be a decomposition such that
\begin{equation}\label{tangentdecomposition}
\mathcal{H}_{\Gamma}\coloneqq\bigoplus_{a=1}^s\mathfrak{h}_a\oplus\bigoplus_{j\in\Gamma_0\setminus\Gamma}\mathfrak{v}_j,
\qquad\qquad \mathcal{V}_{\Gamma}\coloneqq\bigoplus_{j\in\Gamma}\mathfrak{v}_j,
\end{equation}
for some subset $\Gamma\subset\Gamma_0=\{1,\dots,r\}$.
Suppose further that for this decomposition the projection of $\tau$ onto the space $\mathcal{H}_\Gamma\otimes\Lambda^2\mathcal{V}_\Gamma$ satisfies
\begin{equation}\label{projecttau}
0=\mathrm{pr}_{\mathcal{H}_\Gamma\otimes\Lambda^2\mathcal{V}_\Gamma}\tau\in\mathcal{H}_\Gamma\otimes\Lambda^2\mathcal{V}_\Gamma\subset\Lambda^3(\mathcal{H}_\Gamma\oplus\mathcal{V}_\Gamma).
\end{equation}
This condition turns out to be sufficient to prove Lemma 3.7-3.10 and Remark 3.11 from \cite{CleyMorSemm}. We obtain

\begin{cor}\label{CMSThm}
Suppose the decomposition $TM=\mathcal{V}_\Gamma\oplus\mathcal{H}_\Gamma$ from \eqref{tangentdecomposition} fulfills condition \eqref{projecttau}. Then
\begin{enumerate}[a)]
\item the distribution $\mathcal{V}_\Gamma$ is the vertical distribution of a locally defined Riemannian submersion $(M,g)\overset{\pi}{\longrightarrow} (N,g_N)$ with totally geodesic leaves,
\item there exists a $3$-form $\sigma\in\Lambda^3N$ satisfying $\pi^*\sigma=\mathrm{pr}_{\Lambda^3\mathcal{H}_\Gamma}\tau$,
\item $\nabla^\sigma\coloneqq\nabla^{g_N}+\frac 12\sigma$ defines a connection with parallel skew torsion $\sigma$ on $N$. In particular, we have
\begin{equation}\label{pinabla}
\nabla^\sigma_XY=\pi_*(\nabla^\tau_{\overline{X}}\overline{Y}),
\end{equation}
for the horizontal lifts $\overline{X},\overline{Y}\in TM$ of the vectors fields $X,Y\in TN$.
\end{enumerate}
\end{cor}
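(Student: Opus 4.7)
The plan is to verify that the single hypothesis \eqref{projecttau} suffices to adapt the chain of arguments from Lemmas~3.7--3.10 and Remark~3.11 of \cite{CleyMorSemm} to the coarser splitting $TM=\V_\Gamma\oplus\H_\Gamma$. The key structural input is that both $\V_\Gamma$ and $\H_\Gamma$ are sums of irreducible $\mathrm{Hol}(\nabla^\tau)$-modules, hence $\nabla^\tau$-parallel distributions; in particular $\nabla^\tau$ differentiates vertical (resp.\ horizontal) fields into vertical (resp.\ horizontal) fields. I will write $T$ for the vector-valued torsion of $\nabla^\tau$, so that $g(T(U,V),W)=\tau(U,V,W)$ and $\nabla^g=\nabla^\tau-\tfrac12 T$.

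For part a), I would first establish involutivity and total geodesy of $\V_\Gamma$ using
\[
[V_1,V_2] \;=\; \nabla^\tau_{V_1}V_2 - \nabla^\tau_{V_2}V_1 - T(V_1,V_2)
\]
for $V_1,V_2\in\V_\Gamma$. The $\nabla^\tau$-terms lie in $\V_\Gamma$ by parallelism, while for $X\in\H_\Gamma$ one has $g(T(V_1,V_2),X)=\tau(V_1,V_2,X)$, which is a component of $\mathrm{pr}_{\H_\Gamma\otimes\Lambda^2\V_\Gamma}\tau$ and so vanishes by \eqref{projecttau}. Hence $T(V_1,V_2)\in\V_\Gamma$, giving $[\V_\Gamma,\V_\Gamma]\subset\V_\Gamma$ and also $\nabla^g_{V_1}V_2\in\V_\Gamma$. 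A standard computation of $(\mathcal{L}_Vg)(X,Y)$ for $V\in\V_\Gamma$ and $X,Y\in\H_\Gamma$, using $\nabla^\tau g=0$ together with the same torsion identity, then shows that the induced transverse metric is $\V_\Gamma$-invariant. Hence $g$ descends and we obtain the locally defined Riemannian submersion.

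For part b), set $\sigma_0\coloneqq\mathrm{pr}_{\Lambda^3\H_\Gamma}\tau$. Because $\tau$ is $\nabla^\tau$-parallel and the projection onto the horizontal irreducibles is $\mathrm{Hol}(\nabla^\tau)$-equivariant, $\sigma_0$ is $\nabla^\tau$-parallel. By construction $i_V\sigma_0=0$ for $V\in\V_\Gamma$, and $\mathcal{L}_V\sigma_0=0$ follows from Cartan's formula together with $\nabla^\tau\sigma_0=0$ and the parallelism of the splitting; the only mixed projection of $\tau$ that could interfere is $\mathrm{pr}_{\H_\Gamma\otimes\Lambda^2\V_\Gamma}\tau$, and this is killed by \eqref{projecttau}. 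Therefore $\sigma_0$ is basic and descends to a unique $3$-form $\sigma$ on $N$ with $\pi^{\ast}\sigma=\sigma_0$.

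For part c), take horizontal lifts $\overline{X},\overline{Y}$ of $X,Y\in TN$ and decompose $\nabla^\tau_{\overline{X}}\overline{Y}=\nabla^g_{\overline{X}}\overline{Y}+\tfrac12 T(\overline{X},\overline{Y})$. Applying $\pi_\ast$: the O'Neill $A$-tensor contribution inside $\nabla^g_{\overline{X}}\overline{Y}$ is vertical and thus disappears, so $\pi_\ast\nabla^g_{\overline{X}}\overline{Y}=\nabla^{g_N}_XY$; meanwhile, the horizontal component of $T(\overline{X},\overline{Y})$ projects to the vector-valued form associated with $\sigma$. This yields \eqref{pinabla}, and $\nabla^\sigma\sigma=0$ follows by pushing $\nabla^\tau\sigma_0=0$ down to $N$. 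The main obstacle I anticipate lies in part b): one must carefully check that among all possible projections of $\tau$ onto the mixed components of $\Lambda^3(\V_\Gamma\oplus\H_\Gamma)$, only $\mathrm{pr}_{\H_\Gamma\otimes\Lambda^2\V_\Gamma}\tau$ need be assumed to vanish---the $\Lambda^2\H_\Gamma\otimes\V_\Gamma$ component is harmlessly absorbed into the $A$-tensor---so that \eqref{projecttau} really is the minimal sufficient condition enabling the coarser splitting to work.
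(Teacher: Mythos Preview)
Your proposal is correct and aligned with the paper's approach. The paper does not give an independent argument: it simply asserts that condition \eqref{projecttau} ``turns out to be sufficient to prove Lemma~3.7--3.10 and Remark~3.11 from \cite{CleyMorSemm}'' and states the corollary, adding only the remark that \eqref{pinabla} follows from the standard identity $\nabla^{g_N}_XY=\pi_*(\nabla^g_{\overline X}\overline Y)$ for Riemannian submersions. Your sketch is precisely the verification the paper leaves implicit, and your derivation of \eqref{pinabla} via $\nabla^\tau=\nabla^g+\tfrac12 T$ together with the $A$-tensor being vertical matches the paper's one-line justification.
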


Equation \eqref{pinabla} is not stated explicitely in \cite{CleyMorSemm} but follows directly from $\nabla^{g_N}_{X}{Y}=\pi_*(\nabla^{g}_{\overline{X}}\overline{Y})$ for Riemannian submersions \cite[Prop. 13]{Petersen}.
To a Riemannian submersion one assigns the O'Neill tensors
\[
\mathcal{A}_XY=(\nabla^g_{X_{\mathcal{H}}}Y_{\mathcal{H}})_{\mathcal{V}}+(\nabla^g_{X_{\mathcal{H}}}Y_{\mathcal{V}})_{\mathcal{H}},
\qquad
\mathcal{T}_XY=(\nabla^g_{X_{\mathcal{V}}}Y_{\mathcal{H}})_{\mathcal{V}}+(\nabla^g_{X_{\mathcal{V}}}Y_{\mathcal{V}})_{\mathcal{H}}.
\]
Here the subscripts denote projection on the respective subspaces. For the submersion above $\mathcal{A}$ and $\mathcal{T}$ simplify:
\begin{lemma}\label{ATensor}
The O'Neill tensors $\mathcal{A}$ and $\mathcal{T}$ associated to the submersion defined by $TM=\mathcal{V}_\Gamma\oplus\mathcal{H}_\Gamma$ are given by
\[
g(\mathcal{A}_XY,Z)=-\frac{1}{2}(\tau(X_{\mathcal{H}_\Gamma},Y_{\mathcal{H}_\Gamma},Z_{\mathcal{V}_\Gamma})+\tau(X_{\mathcal{H}_\Gamma},Y_{\mathcal{V}_\Gamma},Z_{\mathcal{H}_\Gamma})),\qquad \mathcal{T}=0.
\]
\end{lemma}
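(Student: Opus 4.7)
\smallskip

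\noindent\textbf{Proof plan.}
The whole computation hinges on two observations: first, the identity
\[
\nabla^g_X Y \;=\; \nabla^\tau_X Y \,-\, \tfrac{1}{2}\tau^\sharp(X,Y),
\qquad g\bigl(\tau^\sharp(X,Y),Z\bigr)=\tau(X,Y,Z),
\]
which rewrites every Levi-Civita derivative in terms of $\nabla^\tau$; and second, the fact that by construction the decomposition $TM=\mathcal{V}_\Gamma\oplus\mathcal{H}_\Gamma$ in \eqref{tangentdecomposition} is a sum of holonomy-irreducible modules of $\nabla^\tau$, hence $\nabla^\tau$ \emph{preserves} the two distributions: $\nabla^\tau_X Y$ is vertical if $Y\in\mathcal{V}_\Gamma$ and horizontal if $Y\in\mathcal{H}_\Gamma$, for any $X$.

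\smallskip

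\noindent I would then compute $g(\mathcal{A}_XY,Z)$ by reducing to the two genuine cases. Only $X_{\mathcal{H}_\Gamma}$ enters the definition of $\mathcal{A}$, so
\[
g(\mathcal{A}_XY,Z)\;=\;g\bigl(\nabla^g_{X_{\mathcal{H}_\Gamma}}Y_{\mathcal{H}_\Gamma},Z_{\mathcal{V}_\Gamma}\bigr)\,+\,g\bigl(\nabla^g_{X_{\mathcal{H}_\Gamma}}Y_{\mathcal{V}_\Gamma},Z_{\mathcal{H}_\Gamma}\bigr).
\]
In the first summand $\nabla^\tau_{X_{\mathcal{H}_\Gamma}}Y_{\mathcal{H}_\Gamma}$ is horizontal and therefore $g$-orthogonal to $Z_{\mathcal{V}_\Gamma}$, so only the torsion term survives and yields $-\tfrac12 \tau(X_{\mathcal{H}_\Gamma},Y_{\mathcal{H}_\Gamma},Z_{\mathcal{V}_\Gamma})$. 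The second summand is handled identically, using that $\nabla^\tau_{X_{\mathcal{H}_\Gamma}}Y_{\mathcal{V}_\Gamma}$ is vertical. Adding the two gives exactly the stated formula.

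\smallskip

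\noindent For $\mathcal{T}=0$ one may quote totally geodesic leaves from Corollary \ref{CMSThm}(a), but a parallel argument makes the role of \eqref{projecttau} transparent. Writing
\[
g(\mathcal{T}_XY,Z)\;=\;g\bigl(\nabla^g_{X_{\mathcal{V}_\Gamma}}Y_{\mathcal{V}_\Gamma},Z_{\mathcal{H}_\Gamma}\bigr)\,+\,g\bigl(\nabla^g_{X_{\mathcal{V}_\Gamma}}Y_{\mathcal{H}_\Gamma},Z_{\mathcal{V}_\Gamma}\bigr),
\]
the $\nabla^\tau$-pieces drop as before since $\nabla^\tau$ preserves the splitting, leaving the torsion contributions $-\tfrac12\tau(X_{\mathcal{V}_\Gamma},Y_{\mathcal{V}_\Gamma},Z_{\mathcal{H}_\Gamma})$ and $-\tfrac12\tau(X_{\mathcal{V}_\Gamma},Y_{\mathcal{H}_\Gamma},Z_{\mathcal{V}_\Gamma})$. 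Both lie in $\mathcal{H}_\Gamma\otimes\Lambda^2\mathcal{V}_\Gamma$ after using the skew-symmetry of $\tau$, and hence vanish by hypothesis \eqref{projecttau}.

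\smallskip

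\noindent The only potentially delicate point is the invariance of the splitting $\mathcal{V}_\Gamma\oplus\mathcal{H}_\Gamma$ under $\nabla^\tau$; this is immediate from the fact that each summand is a sum of holonomy-irreducible submodules, so I do not expect a real obstacle. Everything else is a direct expansion using $\nabla^g=\nabla^\tau-\tfrac12\tau^\sharp$ together with the orthogonality of the two distributions.
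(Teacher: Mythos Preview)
Your proposal is correct and follows essentially the same route as the paper: both arguments use that $\mathcal{H}_\Gamma$ and $\mathcal{V}_\Gamma$ are $\nabla^\tau$-parallel (being sums of holonomy submodules), rewrite $\nabla^g=\nabla^\tau-\tfrac12\tau^\sharp$, and let the $\nabla^\tau$-terms drop by orthogonality, with $\mathcal{T}=0$ then following from condition~\eqref{projecttau}. The paper's version is simply more compressed.
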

\begin{proof}
Since $\mathcal{H}_\Gamma$ and $\mathcal{V}_\Gamma$ are $\nabla^\tau$-holonomy invariant
$(\nabla^\tau_XY_{\mathcal{H}_\Gamma})_{\mathcal{V}_\Gamma}=(\nabla^\tau_XY_{\mathcal{V}_\Gamma})_{\mathcal{H}_\Gamma}=0$. Thus, $g(\nabla^g_{X}Y_{\mathcal{H}_\Gamma},Z_{\mathcal{V}_\Gamma})=-\frac 12 \tau(X,Y_{\mathcal{H}_\Gamma},Z_{\mathcal{V}_\Gamma})$ and $g(\nabla^g_{X}Y_{\mathcal{V}_\Gamma},Z_{\mathcal{H}_\Gamma})=-\frac 12 \tau(X,Y_{\mathcal{V}_\Gamma},Z_{\mathcal{H}_\Gamma})$. The first expression follows directly.
The identity $\mathcal{T}=0$ is then an immediate consequence of condition \eqref{projecttau}.
\end{proof}
The vanishing of $\mathcal{T}$ does not come as a surprise since it is equivalent to the fibers being totally geodesic.

\medskip
We now discuss the situation for $3$-$(\alpha,\delta)$-Sasaki manifolds. By \eqref{canonical} the holonomy representation of the canonical connection $\nabla$ of a $3$-$(\alpha,\delta)$-Sasaki manifold splits into the horizontal and vertical subspaces $\mathcal{H}$ and $\mathcal{V}$. In the non-parallel case $\mathcal{V}$ is irreducible, in the parallel case it decomposes into $3$ trivial $1$-dimensional representations. In either case the curvature properties stated in \autoref{curv-ids} allow us to prove:

\begin{lem}\label{Vvertical}
The vertical distribution $\mathcal{V}$ of a $3$-$(\alpha,\delta)$-Sasaki manifold is vertical with respect to the above notation.
\end{lem}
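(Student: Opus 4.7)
The plan is to prove the stronger statement that any $A \in \mathfrak{hol}$ satisfying $A|_{\mathcal{H}} = 0$ must be zero, which yields $\mathfrak{so}(\mathcal{V}) \cap \mathfrak{hol} = \{0\}$ and hence shows that every irreducible $\mathfrak{hol}$-summand of $\mathcal{V}$ is vertical in the sense of \cite{CleyMorSemm}.

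The key device is a parallel bundle map $\iota : \mathcal{V} \to \End(\mathcal{H})$ defined pointwise by $\iota(\xi_i) = \varphi_i|_{\mathcal{H}}$. To prove its parallelism I would compare the two structure equations recalled in Section 1, namely $\nabla_X \xi_i = \beta(\eta_k(X)\xi_j - \eta_j(X)\xi_k)$ and $\nabla_X \varphi_i = \beta(\eta_k(X)\varphi_j - \eta_j(X)\varphi_k)$: upon restricting the second to $\mathcal{H}$, the two right-hand sides agree via $\iota$. Since $\varphi_1|_{\mathcal{H}}, \varphi_2|_{\mathcal{H}}, \varphi_3|_{\mathcal{H}}$ are linearly independent in $\End(\mathcal{H})$ (they span the $\mathfrak{sp}(1)$-summand of the almost quaternionic structure on $\mathcal{H}$), the map $\iota$ is also injective.

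Parallelism implies that $\iota$ is equivariant for the holonomy action. The natural $\mathfrak{hol}$-action on $\End(\mathcal{H})$ is by commutator, so $A \cdot \iota(\xi_i) = [A|_{\mathcal{H}}, \varphi_i|_{\mathcal{H}}] = 0$ whenever $A|_{\mathcal{H}} = 0$. Injectivity of $\iota$ then forces $A\xi_i = 0$ for $i = 1, 2, 3$, so $A|_{\mathcal{V}} = 0$ and finally $A = 0$. This settles the non-parallel case, where $\mathcal{V}$ is irreducible; in the parallel case the submodules $\langle\xi_i\rangle$ are one-dimensional and verticality is automatic from $\mathfrak{so}(\langle\xi_i\rangle) = \{0\}$.

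The curvature identities of \autoref{curv-ids} are not used directly in this plan; they enter implicitly through the preceding paragraph, where the irreducible decomposition of $\mathcal{V}$ is established. An alternative route, closer in spirit to the authors' hint, would apply Ambrose--Singer: one projects each $R(U,V)$ onto $\mathfrak{so}(\mathcal{V})$ and observes, using \autoref{curv-ids} together with the exchange symmetry $R(A,B,C,D) = R(C,D,A,B)$ available because $\nabla T = 0$, that this projection is proportional to the $\mathfrak{sp}(1)$-component of the $\mathcal{H}$-part of $R(U,V)$. I expect the only real bookkeeping obstacle, in either approach, to be keeping straight the coupling between the $\mathfrak{sp}(1)$-structure on $\mathcal{V}$ and the one on $\mathcal{H}$, which is exactly the content of the map $\iota$.
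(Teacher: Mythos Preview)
Your argument is correct and is genuinely different from the paper's. The paper proceeds via Ambrose--Singer: it runs through the curvature identities of \autoref{curv-ids} (cited from \cite{ADS21}) to check, case by case, that no curvature operator $R(X,Y)$ can act nontrivially on $\mathcal{V}$ while annihilating $\mathcal{H}$. Your route bypasses curvature entirely by exploiting the parallel injective bundle map $\iota:\mathcal{V}\to\End(\mathcal{H})$, $\xi_i\mapsto\varphi_i|_{\mathcal{H}}$, whose equivariance forces $\mathfrak{so}(\mathcal{V})\cap\mathfrak{hol}=\{0\}$ directly from the structure equations for $\nabla\xi_i$ and $\nabla\varphi_i$. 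The advantage of your approach is self-containment: you avoid importing the curvature computations of \cite{ADS21}, and the statement $\mathfrak{so}(\mathcal{V})\cap\mathfrak{hol}=\{0\}$ is actually stronger than what is needed, making the case distinction (parallel vs.\ non-parallel) superfluous. The paper's route, on the other hand, makes the precise coupling between the $\mathcal{V}$-action and $\mathcal{H}$-action of holonomy quantitatively explicit, which is informative in its own right. Your aside about the alternative Ambrose--Singer argument using pair symmetry $R(A,B,C,D)=R(C,D,A,B)$ (valid here since $\nabla T=0$) is exactly how the paper handles the $X,Y\in\mathcal{H}$ case via \eqref{offdiag} and \eqref{2Curv}.
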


\begin{proof}
By the Ambrose-Singer Theorem the holonomy algebra $\mathfrak{hol}$ of the holonomy group $\mathrm{Hol}(p)$ at a point $p$ is given by
\[
\mathfrak{hol}=\{P^{-1}_{\gamma}\circ R(P_\gamma X,P_\gamma Y)\circ P_{\gamma}\ |\ \gamma\ \text{some path from $p$ to $q$,}\ X,Y\in T_pM\}\subset\mathfrak{so}(T_pM)
\]
where $P_{\gamma}$ denotes parallel transport along $\gamma$ and $R(X,Y)\in\so(T_qM)$ the curvature operator. The horizontal and vertical distribution are invariant under parallel transport with respect to the canonical connection. Thus, we may assume $\gamma$ to be trivial when investigating the holonomy action on these distributions. By \eqref{0Curv} we know that the holonomy is only non-trivial if $X,Y\in\mathcal{V}$ or $X, Y\in\mathcal{H}$. In the first case \eqref{1Curv} and \eqref{offdiag} show that every element of $\mathfrak{hol}$ acting non-trivially on $\mathcal{V}$ must also act non-trivially on $\mathcal{H}$. The action of an element $R(X,Y)$, $X,Y\in\mathcal{H}$, on $\mathcal{V}$ is again given by \eqref{offdiag}. Any such element of $\mathfrak{hol}$ acts non-trivially on $\mathcal{V}$ if $\beta\neq 0$ and $\Phi_i(X,Y)\neq 0$ for some $i=1,2,3$. In this case $R(X,Y)$ is also a non-trivial operator on $\mathcal{H}$ by \eqref{2Curv}.
\end{proof}
\begin{prop}\label{prop.canonicalsubm}
The decomposition $TM=\mathcal{H}\oplus\mathcal{V}$ of a $3$-$(\alpha,\delta)$-Sasaki manifold $M$ satisfies the conditions in \autoref{CMSThm}. In particular, there exists a locally defined Riemannian submersion $\pi\colon M\to N$ such that
\begin{equation}\label{nablanablag}
\nabla^{g_N}_XY=\pi_*(\nabla_{\overline{X}}\overline{Y}).
\end{equation}
\end{prop}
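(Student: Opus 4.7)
The plan is to verify both hypotheses of \autoref{CMSThm} for the canonical connection $\nabla$ with torsion $\tau=T$ and the splitting $TM=\mathcal{H}\oplus\mathcal{V}$, then simply read off \eqref{nablanablag} as a special case of \eqref{pinabla}. There are two items to check: (i) that the decomposition $\mathcal{V}\oplus\mathcal{H}$ matches the abstract decomposition \eqref{tangentdecomposition} for a suitable $\Gamma$, and (ii) that the projection condition \eqref{projecttau} holds.

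For (i), note that by the structure equations \eqref{canonical} together with $\nabla_X\xi_i=\beta(\eta_k(X)\xi_j-\eta_j(X)\xi_k)$, the distributions $\mathcal{H}$ and $\mathcal{V}$ are each $\nabla$-parallel, hence holonomy-invariant. Decomposing $TM$ into irreducible $\mathrm{Hol}(\nabla)$-modules, every summand lies entirely in one of them. By \autoref{Vvertical}, every irreducible submodule of $\mathcal{V}$ is vertical in the sense of \cite{CleyMorSemm} (in the parallel case $\beta=0$ one simply notes that the three trivial one-dimensional summands spanned by the $\xi_i$ are vacuously vertical). Letting $\Gamma$ be the set of indices of the vertical irreducibles occurring in $\mathcal{V}$ gives $\mathcal{V}_\Gamma=\mathcal{V}$ and $\mathcal{H}_\Gamma=\mathcal{H}$.

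For (ii), I would simply inspect \eqref{torsion01}: writing
\[
T\ =\ 2\alpha\sum_{i=1}^3\eta_i\wedge\Phi^{\mathcal{H}}_i\ +\ 2(\delta-4\alpha)\,\eta_{123},
\]
the first summand lies in $\mathcal{V}^*\otimes\Lambda^2\mathcal{H}^*$ (since $\eta_i\in\mathcal{V}^*$ and $\Phi^{\mathcal{H}}_i\in\Lambda^2\mathcal{H}^*$) and the second in $\Lambda^3\mathcal{V}^*$. Consequently the component of $T$ in $\mathcal{H}^*\otimes\Lambda^2\mathcal{V}^*$ vanishes identically, which is exactly \eqref{projecttau}. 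Applying \autoref{CMSThm} produces the locally defined Riemannian submersion $\pi\colon M\to N$ with totally geodesic leaves, and formula \eqref{nablanablag} is then \eqref{pinabla} in this case.

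The only slightly delicate step is the matching of $\mathcal{V}$ with $\mathcal{V}_\Gamma$, because the notion of a vertical submodule from \cite{CleyMorSemm} is non-trivial to verify in principle; but this is precisely the content of \autoref{Vvertical}, whose proof has already exploited the curvature identities in \autoref{curv-ids}. Once that lemma is in hand, everything else is an immediate reading of the torsion formula.
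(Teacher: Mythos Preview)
Your argument is essentially the paper's own proof, and steps (i) and (ii) are handled correctly. There is, however, one small gap in the last sentence. Formula \eqref{pinabla} reads $\nabla^\sigma_XY=\pi_*(\nabla_{\overline X}\overline Y)$ with $\nabla^\sigma=\nabla^{g_N}+\tfrac12\sigma$, whereas \eqref{nablanablag} has $\nabla^{g_N}$ on the left. These coincide only if $\sigma=0$, i.e.\ if $\mathrm{pr}_{\Lambda^3\mathcal H}T=0$. You have in fact already shown this: your decomposition of $T$ into a piece in $\mathcal V^*\otimes\Lambda^2\mathcal H^*$ and a piece in $\Lambda^3\mathcal V^*$ leaves no $\Lambda^3\mathcal H^*$ component, hence $\pi^*\sigma=0$ and $\nabla^\sigma=\nabla^{g_N}$. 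But you should say so explicitly rather than asserting that \eqref{nablanablag} ``is'' \eqref{pinabla}; the paper makes precisely this observation in its last sentence of the proof.
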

\begin{definition}
We will call $\pi\colon M\to N$ the \emph{canonical submersion} of a $3$-$(\alpha,\delta)$-Sasaki manifold.
\end{definition}
\begin{proof}[Proof (of \autoref{prop.canonicalsubm})]
By \eqref{canonical} and \autoref{Vvertical} the decomposition $TM=\mathcal{V}\oplus\mathcal{H}$ is of type \eqref{tangentdecomposition} with respect to the canonical connection $\nabla$. By \eqref{torsion01} the projection of the torsion onto $\mathcal{H}\otimes\Lambda^2\mathcal{V}$ vanishes, satisfying \eqref{projecttau}. Therefore the conditions of \autoref{CMSThm} are satisfied. Moreover, \eqref{torsion01} shows that the projection of $\tau$ onto $\Lambda^3\mathcal{H}$ vanishes so the connection $\nabla^\sigma$ in \eqref{pinabla} for the canonical submersion is the Levi-Civita connection $\nabla^{g_N}$ on $N$.
\end{proof}

We observe that the canonical submersion is, indeed, an almost contact metric $3$-submersion in the sense of \cite{Watson84}, although we never make explicity use of this property (our formulas are much more detailed than the general results obtained therein).
\subsection{The quaternionic K\"ahler structure on the base}
%
We give a preliminary lemma needed to prove that the base of the canonical submersion admits a qK structure. Recall that a basic vector field on $M$ is a horizontal vector field which is projectable, that is $\pi$-related to some vector field defined on $N$. If $X\in TN$, the horizontal lift of $X$ is the unique basic vector field $\overline{X}\in TM$ such that $\pi_* \overline{X}=X$.
\begin{lemma}\label{Vnabla}
For any vertical vector field $X\in\mathcal{V}$ and for any basic vector field $Y\in\mathcal{H}$ we have
\[
(\nabla_{X}Y)_{\mathcal{H}}=-2\alpha\sum_{i=1}^3\eta_i(X)\varphi_i Y.
\]
\end{lemma}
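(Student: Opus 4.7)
The plan is to split the canonical connection into its Levi-Civita part and half its torsion, $\nabla_X Y = \nabla^g_X Y + \tfrac12 T(X,Y)$, and to analyse the horizontal component of each summand separately.

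For the torsion term I would read off $T(X,Y)$ directly from \eqref{torsion02}. Since $Y\in\mathcal H$, the terms $\eta_i(Y)\varphi_iX$ and $\eta_{ij}(X,Y)\xi_k$ vanish, and the term $\Phi_i(X,Y)\xi_i=g(X,\varphi_iY)\xi_i$ vanishes as well because $\varphi_iY\in\mathcal H$ is orthogonal to the vertical $X$. Only one contribution survives, namely
\[
T(X,Y)=-2\alpha\sum_{i=1}^3\eta_i(X)\varphi_iY,
\]
which is automatically horizontal, so $\tfrac12 (T(X,Y))_{\mathcal H}=-\alpha\sum_i\eta_i(X)\varphi_iY$.

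For the Levi-Civita term I need $(\nabla^g_XY)_{\mathcal H}$, and this is precisely where the basic hypothesis on $Y$ is used. Since $X$ projects to $0$ and $Y$ is $\pi$-related to a vector field on $N$, the bracket $[X,Y]$ is $\pi$-related to $0$, hence vertical. Consequently
\[
(\nabla^g_XY)_{\mathcal H}=(\nabla^g_YX+[X,Y])_{\mathcal H}=(\nabla^g_YX)_{\mathcal H}=\mathcal A_YX,
\]
by the definition of the O'Neill $\mathcal A$-tensor for the canonical submersion. Invoking \autoref{ATensor} with $Y\in\mathcal H$, $X\in\mathcal V$ and any $Z\in\mathcal H$ gives
\[
g(\mathcal A_YX,Z)=-\tfrac12\,\tau(Y,X,Z)=\tfrac12\,T(X,Y,Z)=-\alpha\sum_{i=1}^3\eta_i(X)\,g(\varphi_iY,Z),
\]
so $\mathcal A_YX=-\alpha\sum_i\eta_i(X)\varphi_iY$.

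Adding the two contributions yields the asserted formula. I do not anticipate a real obstacle: the only conceptual point is to recognise that the basic hypothesis on $Y$ is exactly what allows the identification of $(\nabla^g_XY)_{\mathcal H}$ with $\mathcal A_YX$; the remainder is a one-line application of \autoref{ATensor} combined with the explicit expression \eqref{torsion02} for the torsion.
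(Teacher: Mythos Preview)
Your argument is correct. Both you and the paper start from the splitting $\nabla_X Y=\nabla^g_X Y+\tfrac12 T(X,Y)$ and compute the torsion half in the same way, obtaining $-\alpha\sum_i\eta_i(X)\varphi_iY$. The difference lies in how the Levi-Civita half is handled: the paper computes $g(\nabla^g_X Y,Z)$ directly from the Riemannian-submersion identity $g(\nabla^g_X Y,Z)=-\tfrac12 g([Y,Z],X)$ (for $X$ vertical and $Y,Z$ basic) and then invokes \autoref{VComm} to evaluate $[Y,Z]_{\mathcal V}$, whereas you use torsion-freeness to swap to $(\nabla^g_Y X)_{\mathcal H}=\mathcal A_Y X$ and then feed this into \autoref{ATensor}. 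Your route has the virtue of staying entirely inside machinery already set up in the paper (no external submersion formula is needed), at the mild cost of an extra step through $\nabla^g_Y X$; the paper's route is more direct but imports the identity from Petersen and needs a second basic vector field~$Z$. Both end up computing the same quantity $\tfrac12 T(X,Y,Z)$ for the Levi-Civita contribution, so the two arguments are close cousins rather than genuinely different strategies.
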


\begin{proof}
We first use the identity $g(\nabla^{g}_{X}Y,Z)=-\frac 12 g([Y,Z],X)$ for any vector fields $X\in\mathcal{V}, Y,Z\in\mathcal{H}$, with $Y$ and $Z$ projectable, of a Riemannian submersion \cite[Proposition $13$]{Petersen}. Note that the horizontal and vertical distributions of the Riemannian submersion agree with the same notion in the $3$-$(\alpha,\delta)$-Sasaki setting. Further, we make use of \autoref{VComm} to obtain
\[
g(\nabla^g_{\xi_i}Y,Z)=-\frac 12 g([Y,Z],\xi_i)=\alpha\Phi_i(Y,Z).
\]
Therefore
\begin{align*}
g(\nabla_{X}Y,Z)&=g(\nabla^g_{X}Y,Z)+\frac 12 T(X,Y,Z)=\sum_{i=1}^3\eta_i(X)(\alpha\Phi_i(Y,Z)+\alpha\Phi_i(Y,Z))\\
&=-2\alpha\sum_{i=1}^3\eta_i(X)g(\varphi_i Y,Z).\qedhere
\end{align*}
\end{proof}

\begin{theorem}\label{qkbase}
The base $N$ of the canonical submersion $\pi\colon M\to N$ of any $3$-$(\alpha,\delta)$-Sasaki manifold $M$ carries a quaternionic K\"ahler structure given by
\[
\check{\varphi_i}=\pi_*\circ\varphi_i\circ s_*,\quad i=1,2,3,
\]
where $s\colon U\to M$ is any local smooth section of $\pi$. The covariant derivatives of the almost complex structures $\check{\varphi}_i$ are given by
\[
\nabla^{g_N}_X\check{\varphi}_i=2\delta(\check{\eta}_k(X)\check{\varphi}_j-\check{\eta}_j(X)\check{\varphi}_k),
\]
where $\check{\eta}_i(X)=\eta_i(s_*X)$ for $i=1,2,3$.
\end{theorem}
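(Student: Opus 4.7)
The plan is to first transfer the endomorphisms $\varphi_i$ from $M$ to $N$, check that they give an almost quaternionic Hermitian structure, and then compute the covariant derivative directly via \eqref{nablanablag}. Since $\varphi_i$ preserves $\mathcal{H}$ and sends $\mathcal{V}$ into $\mathcal{V}$, the vertical component of $s_*X$ is killed by $\pi_*\circ\varphi_i$, so $\check{\varphi}_i X = \pi_*(\varphi_i\,\overline{X}|_{s(p)})$. Because $\pi_*\colon\mathcal{H}_{s(p)}\to T_pN$ is an isometry intertwining $\varphi_i|_{\mathcal{H}}$ with $\check{\varphi}_i$, the quaternionic identities $\check{\varphi}_i^2=-\Id$, $\check{\varphi}_i\check{\varphi}_j=\check{\varphi}_k$ (for an even permutation $(ijk)$) and the $g_N$-compatibility follow immediately from the corresponding properties of the $\varphi_i$ on $\mathcal{H}$.

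For the covariant derivative, \eqref{canonical} gives $(\nabla_{\overline X}\varphi_i)=0$ whenever $\overline X$ is horizontal, and \autoref{prop.canonicalsubm} yields $\nabla_{\overline X}\overline Y=\overline{\nabla^{g_N}_X Y}$. Combining these two facts,
\[
(\nabla^{g_N}_X\check{\varphi}_i)(Y)\;=\;\pi_*\bigl[\nabla_{\overline X}\bigl(\overline{\check{\varphi}_i Y}-\varphi_i\overline Y\bigr)\bigr]_{s(p)}.
\]
Writing $E := \overline{\check{\varphi}_i Y}-\varphi_i\overline Y$, one sees that at every $q\in U$ both summands of $E|_{s(q)}$ equal the horizontal lift of $\check{\varphi}_i Y|_q$, so $E$ vanishes identically on $s(U)$.

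The central trick is the decomposition of the horizontal lift at $s(p)$,
\[
\overline X|_{s(p)}\;=\;s_*X\;-\;\sum_{l=1}^3\check{\eta}_l(X)\,\xi_l,
\]
obtained by imposing horizontality (all $\eta_m$ vanish) on a vector projecting onto $X$. Since $E$ vanishes along $s(U)$ and $s_*X$ is tangent to $s(U)$, $\nabla_{s_*X}E|_{s(p)}=0$, leaving $\nabla_{\overline X}E|_{s(p)}=-\sum_l\check{\eta}_l(X)\nabla_{\xi_l}E|_{s(p)}$. The vertical derivatives are computed using $\nabla_{\xi_l}\varphi_i=\beta(\delta_{kl}\varphi_j-\delta_{jl}\varphi_k)$ from \eqref{canonical}, together with the identity $\nabla_{\xi_l}\overline Z=-2\alpha\,\varphi_l\overline Z$ valid for every basic horizontal $\overline Z$ (which follows from \autoref{Vnabla} once one checks, via $\nabla_{\xi_l}\xi_m\in\mathcal{V}$, that this derivative is itself horizontal).

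The hard part is the combinatorial bookkeeping of the six resulting terms. Expanding with $\varphi_i\varphi_j=\varphi_k+\eta_j\otimes\xi_i$ and its analogues, the $l=i$ contributions from $\nabla_{\xi_i}(\varphi_i\overline Y)$ and $\nabla_{\xi_i}\overline{\check{\varphi}_i Y}$ both equal $2\alpha\overline Y$ and cancel in $\nabla_{\xi_i}E$; for $l=j$ and $l=k$ the surviving coefficient is $\beta+4\alpha=2\delta$ (using $\beta=2(\delta-2\alpha)$), producing $\nabla_{\xi_j}E|_{s(p)}=2\delta\,\varphi_k\overline Y$ and $\nabla_{\xi_k}E|_{s(p)}=-2\delta\,\varphi_j\overline Y$. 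Summing and projecting by $\pi_*$ then gives exactly
\[
\nabla^{g_N}_X\check{\varphi}_i\;=\;2\delta\bigl(\check{\eta}_k(X)\check{\varphi}_j-\check{\eta}_j(X)\check{\varphi}_k\bigr).
\]
Since the right-hand side lies in $\mathrm{span}\{\check{\varphi}_j,\check{\varphi}_k\}$, the rank-$3$ subbundle $\langle\check{\varphi}_1,\check{\varphi}_2,\check{\varphi}_3\rangle\subset\End(TN)$ is $\nabla^{g_N}$-parallel, which together with the almost quaternionic Hermitian structure from the first paragraph is precisely the defining property of a quaternionic K\"ahler manifold.
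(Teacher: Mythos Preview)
Your proof is correct and follows essentially the same route as the paper: both use \eqref{nablanablag}, the defining relation \eqref{canonical}, \autoref{Vnabla}, the decomposition of $\overline{X}|_{s(p)}$ into $s_*X$ plus a vertical piece, and the identity $\beta+4\alpha=2\delta$. The one organisational difference is that the paper carries $s_*Y$ throughout and computes the $s_*X$-directional term explicitly via $(\nabla_{s_*X}\varphi_i)(s_*Y)$, whereas you package everything into the auxiliary field $E=\overline{\check{\varphi}_iY}-\varphi_i\overline{Y}$ and use its vanishing along $s(U)$ to kill the $s_*X$-derivative at once; the $\beta$-contribution then reappears through $(\nabla_{\xi_l}\varphi_i)$ instead. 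This is a tidy shortcut, but the substance is identical.
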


\begin{proof}
Let $s$ be a local section of the canonical submersion  $\pi\colon M\to N$, hence $\pi_*\circ s_*=\mathrm{id}$ and $\operatorname{Im}(s_*\circ\pi_*-\mathrm{id})\subset\mathcal{V}$ on the image $s(N)\subset M$. Define
\[
\check{\varphi}_i=\pi_*\circ \varphi_i\circ s_*
\]
for $i=1,2,3$. The horizontal and vertical distributions, $\mathcal{H}$ and $\mathcal{V}$, are invariant under $\varphi_i$. Thus, $\pi_*\circ\varphi_i=\check{\varphi_i}\circ\pi_*$ on $s(N)$. This yields
\begin{align*}
\check{\varphi}_i\check{\varphi}_j&=\check{\varphi}_i\circ (\pi_*\circ \varphi_j\circ s_*)
=\pi_*\circ(\varphi_i \varphi_j) \circ s_*.
\end{align*}
Now use that $(\varphi_i|_\mathcal{H})^2=-\mathrm{id}|_\mathcal{H}$ and $(\varphi_i|_\mathcal{H})(\varphi_j|_\mathcal{H})=\pm\varphi_k|_\mathcal{H}$ with sign $\pm$ depending on whether $(ijk)$ is an even or odd permutation of $(123)$. This shows $\check{\varphi}_i^2=-\mathrm{id}$ and $\check{\varphi}_i\check{\varphi}_j=\pm\check{\varphi}_k$.

Finally, by means of \eqref{nablanablag} and \eqref{canonical}, we show that the quaternionic structure is parallel. First
\begin{align*}
(\nabla^{g_N}_X\check{\varphi}_i)Y
&=(\nabla^{g_N}_X(\check{\varphi}_iY))-(\check{\varphi}_i(\nabla^{g_N}_XY))
=\pi_*\nabla_{\overline{X}}\overline{(\check{\varphi}_iY)}-\check{\varphi}_i\left(\pi_*(\nabla_{\overline{X}}\overline{Y})\right)\\
&=\pi_*\nabla_{\overline{X}}\overline{\left(\pi_*\left(\varphi_i( s_*Y)\right)\right)}-\pi_*\left(\varphi_i\left( s_*\left( \pi_*\left(\nabla_{\overline{X}}\overline{Y}\right)\right)\right)\right).
\end{align*}
By the properties of any Riemannian submersion we have that
$\overline{\left(\pi_*\left(\varphi_i( s_*Y)\right)\right)}=(\varphi_i( s_*Y))_{\H}$ wherever the right side is defined,
that is on the image $s(N)\subset M$. Thus, we take the covariant derivatives in the direction of $s_*X$ resulting
in a vertical correction term $\hat{X}=\overline{X}-s_*X\in\mathcal{V}$. Recall that $\nabla$ and $\varphi_i$ preserve
the horizontal and the vertical distribution. Using \autoref{Vnabla}, we obtain
\begin{align*}
\nabla_{\overline{X}}\overline{\left(\pi_*\left(\varphi_i( s_*Y)\right)\right)}
&=\nabla_{s_*X}(\varphi_i( s_*Y))_{\H}+\nabla_{\hat{X}}(\overline{\left(\pi_*\left(\varphi_i( s_*Y)\right)\right)})\\
&=\left(\nabla_{s_*X}\left(\varphi_i( s_*Y)\right)\right)_{\mathcal{H}}-2\alpha\sum_{l=1}^3\eta_l(\hat{X})\varphi_l \left(\varphi_i( s_*Y)\right)_{\H}.
\end{align*}
For the second summand, the horizontal projection is given by
\begin{align*}
\left(\varphi_i\left( s_*\left( \pi_*\left(\nabla_{\overline{X}}\overline{Y}\right)\right)\right)\right)_{\mathcal{H}}
&=\varphi_i\left( s_*\left( \pi_*\left(\nabla_{\overline{X}}\overline{Y}\right)\right)\right)_{\mathcal{H}}\\
&=\varphi_i\left(\nabla_{s_*X}(s_*Y)\right)_{\mathcal{H}}+\varphi_i\nabla_{\hat{X}}(\overline{Y})\\
&=\left(\varphi_i\left(\nabla_{s_*X}(s_*Y)\right)\right)_{\mathcal{H}}-2\alpha\sum_{l=1}^3\eta_l(\hat{X})\varphi_i(\varphi_l (s_*Y))_{\H}.
\end{align*}
Recombining both identities we obtain
\begin{align*}
(\nabla^{g_N}_X\check{\varphi}_i)Y
&=\pi_*\big(\nabla_{s_*X}\left(\varphi_i( s_*Y)\right)-\varphi_i\left(\nabla_{s_*X}(s_*Y)\right)+2\alpha\sum_{l=1}^3\eta_l(\hat{X})(\varphi_i\varphi_l (s_*Y)_{\H}-\varphi_l\varphi_i(s_*Y)_{\H})\big)\\
&=\pi_*\big((\nabla_{s_*X}\varphi_i)s_*Y-2\alpha\sum_{l=1}^3\eta_l(s_*X)(\varphi_i\varphi_l (s_*Y)_{\H}-\varphi_l\varphi_i(s_*Y)_{\H})\big)\\
&=(\beta+4\alpha)\big((\eta_k(s_*X)\circ s)\pi_*(\varphi_j(s_*Y))-(\eta_j(s_*X)\circ s)\pi_*(\varphi_k(s_*Y))\big)\\
&=2\delta(\check\eta_k(X)\check{\varphi}_j-\check\eta_j(X)\check{\varphi}_k)Y.
\end{align*}
Here we used the defining identity \eqref{canonical} of the canonical connection for any even permutation
$(ijk)$ of $(123)$. Therefore, the quaternionic structure is parallel and $N$ is quaternionic K\"ahler.
\end{proof}
\begin{rem}
A priori the quaternionic structure may depend on the chosen section $s$. Indeed, the individual almost complex
structures $\check{\varphi}_i$ vary with $s$. However, following the work of P. Piccinni and I. Vaisman \cite{PicVai},
one can see that the quaternionic structure is preserved under the Bott connection $\mathring{D}\colon\V \times\H\to \H$
defined by $\mathring{D}_VX=[V,X]_{\mathcal{H}}$, since
\[
(\mathring{D}_{\xi_i}\varphi_j)X=[\xi_i,\varphi_j X]_{\mathcal{H}}-\varphi_j[\xi_i,X]_\mathcal{H}=((\mathcal{L}_{\xi_i}\varphi_j)X)_\mathcal{H}=2\delta\epsilon_{ijk}\varphi_kX.
\]
This implies that the quaternionic structure is projectable and, thus, independent of choices.
\end{rem}
\begin{cor}
A $3$-$(\alpha,\delta)$-Sasaki manifold fibers locally over a hyperk\"ahler manifold if it is degenerate.
\end{cor}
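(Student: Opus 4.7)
The plan is to simply specialize \autoref{qkbase} to the case $\delta = 0$ and observe that the resulting structure on the base is not merely quaternionic Kähler, but in fact hyperkähler. Recall that a hyperkähler manifold is a Riemannian manifold $(N, g_N)$ equipped with a compatible triple of almost complex structures $\check\varphi_1, \check\varphi_2, \check\varphi_3$ satisfying the quaternion relations, each of which is $\nabla^{g_N}$-parallel (as opposed to merely having a parallel quaternionic line bundle).

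First, I would invoke \autoref{prop.canonicalsubm} to obtain the canonical submersion $\pi\colon M \to N$, which exists regardless of the value of $\delta$. Next, I would apply \autoref{qkbase} to produce the triple $\check\varphi_i = \pi_* \circ \varphi_i \circ s_*$ on $N$, which already gives the quaternionic structure and the formula
\[
\nabla^{g_N}_X \check\varphi_i = 2\delta(\check\eta_k(X)\check\varphi_j - \check\eta_j(X)\check\varphi_k).
\]

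Setting $\delta = 0$, the right-hand side vanishes identically, so that $\nabla^{g_N}\check\varphi_i = 0$ for each $i = 1, 2, 3$. Together with the quaternion relations $\check\varphi_i^2 = -\mathrm{id}$ and $\check\varphi_i \check\varphi_j = \pm \check\varphi_k$ already established in the theorem, this is exactly the definition of a hyperkähler structure on $N$. There is no real obstacle here: the conclusion is an immediate corollary of the theorem, and the only thing to point out is that in the degenerate case the weak quaternionic Kähler parallelism is upgraded to the strong hyperkähler parallelism of each individual complex structure.
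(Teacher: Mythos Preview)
Your proposal is correct and matches the paper's approach: the corollary is stated without a separate proof, as it follows immediately from the covariant derivative formula in \autoref{qkbase} by setting $\delta=0$. Your write-up makes explicit exactly this step.
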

\begin{rem}
Apart from the degenerate case the induced quaternionic K\"ahler structure is hyperk\"ahler if and only if $s_*X\in \mathcal{H}$ for all $X\in TN$. Such a section exists if and only if the horizontal distribution is tangent to $s(N)$ and, thus, integrable. This is in contrast to \autoref{VComm} for any $3$-$(\alpha,\delta)$-Sasaki manifold.
\end{rem}
We can now relate the curvature of $N$ with that of $M$.
\begin{theorem}\label{scal}
Let $\pi\colon M\to N$ be the canonical submersion of a $3$-$(\alpha,\delta)$-Sasaki manifold. Then
\[
\mathrm{scal}_{g_N}=16n(n+2)\alpha\delta.
\]
\end{theorem}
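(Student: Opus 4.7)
My plan is to bypass the Ricci tensor of $M$ entirely and read $\mathrm{scal}_{g_N}$ off the $\mathfrak{sp}(1)$-curvature of the quaternionic K\"ahler base. Recall that on any quaternionic K\"ahler manifold $N^{4n}$ with local almost complex structures $\check\varphi_i$ and connection $1$-forms $\omega_i$ characterised by $\nabla^{g_N}\check\varphi_i = \omega_k\check\varphi_j - \omega_j\check\varphi_k$, the standard structure equation
\[
d\omega_i + \omega_j\wedge\omega_k \;=\; \frac{\mathrm{scal}_{g_N}}{4n(n+2)}\,\check\Omega_i,\qquad \check\Omega_i(X,Y)=g_N(X,\check\varphi_i Y),
\]
holds for every even permutation $(ijk)$ of $(123)$. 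By Theorem \ref{qkbase} the relevant connection 1-forms here are $\omega_i = 2\delta\,\check\eta_i$, so it suffices to compute $d\omega_i+\omega_j\wedge\omega_k$ explicitly and match coefficients.

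Pulling back \eqref{differential_eta} along the section $s$ immediately gives
\[
d\check\eta_i \;=\; 2\alpha\,s^*\Phi_i + 2(\alpha-\delta)\,\check\eta_j\wedge\check\eta_k .
\]
The only genuine calculation is the identity $s^*\Phi_i = \check\Omega_i - \check\eta_j\wedge\check\eta_k$. To establish it, I would decompose $s_*X = \overline{X}+\sum_l \check\eta_l(X)\,\xi_l$ into horizontal lift plus vertical correction, and use on the $\mathcal{H}$-part the relation $\pi_*\circ\varphi_i = \check\varphi_i\circ\pi_*$ on horizontal vectors (contributing $\check\Omega_i(X,Y)$); on the $\mathcal{V}$-part the compatibility identities $\varphi_i\xi_i = 0$, $\varphi_i\xi_j=\xi_k$, $\varphi_i\xi_k=-\xi_j$ for even permutations then produce $-(\check\eta_j\wedge\check\eta_k)(X,Y)$ after a short bilinear expansion.

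Substituting back, the two $\check\eta_j\wedge\check\eta_k$ contributions combine to
\[
d\check\eta_i \;=\; 2\alpha\,\check\Omega_i - 2\delta\,\check\eta_j\wedge\check\eta_k .
\]
Multiplying by $2\delta$ and adding $\omega_j\wedge\omega_k = 4\delta^2\,\check\eta_j\wedge\check\eta_k$ produces the clean cancellation
\[
d\omega_i+\omega_j\wedge\omega_k \;=\; 4\alpha\delta\,\check\Omega_i,
\]
whence comparison with the qK structure equation yields $\mathrm{scal}_{g_N} = 16n(n+2)\alpha\delta$.

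The main obstacle is the bookkeeping behind the identity $s^*\Phi_i = \check\Omega_i-\check\eta_j\wedge\check\eta_k$, which demands careful handling of the vertical correction in $s_*X$. Two consistency checks justify the approach: although the $\check\eta_i$ individually depend on the chosen section $s$, the combination $d\omega_i+\omega_j\wedge\omega_k$ above is section-independent, in agreement with the projectability of the quaternionic structure discussed in the Remark following Theorem \ref{qkbase}; and the specialization $\alpha=\delta=1$ recovers the expected value $\mathrm{scal}=16n(n+2)$ for the standard Fubini--Study metric on $\mathbb{HP}^n$ which sits below the round sphere $S^{4n+3}$.
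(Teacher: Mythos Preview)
Your argument is correct and takes a genuinely different route from the paper. The paper computes $\mathrm{scal}_{g_N}$ via the O'Neill machinery: it evaluates the tensor $\mathcal{A}$ explicitly, invokes the submersion identity $\mathrm{Ric}_g=\pi^*\mathrm{Ric}_{g_N}-2\sum_j g(\mathcal{A}_{\,\cdot\,}e_j,\mathcal{A}_{\,\cdot\,}e_j)$, and then plugs in the Ricci tensor of $M$ from \cite{AgrDil}. Your approach instead stays entirely on $N$: you read the $\mathfrak{sp}(1)$-connection forms $\omega_i=2\delta\check\eta_i$ off Theorem~\ref{qkbase}, compute their curvature $d\omega_i+\omega_j\wedge\omega_k$ by pulling back \eqref{differential_eta} through the section $s$, and match against the well-known quaternionic K\"ahler identity relating this $\mathfrak{sp}(1)$-curvature to $\mathrm{scal}_{g_N}/4n(n+2)$. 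Your calculation of $s^*\Phi_i=\check\Omega_i-\check\eta_j\wedge\check\eta_k$ via the decomposition $s_*X=\overline{X}+\sum_l\check\eta_l(X)\xi_l$ is correct. The trade-off is this: the paper's argument is self-contained modulo the Ricci formula already established in \cite{AgrDil}, whereas yours bypasses the Ricci tensor of $M$ entirely but imports a nontrivial quaternionic K\"ahler fact (the structure equation with the precise constant $\mathrm{scal}_{g_N}/4n(n+2)$) which you should cite explicitly---e.g.\ \cite[\S14]{BesseEinstein}---and which, strictly speaking, requires a separate argument when $n=1$.
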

\begin{proof}
By \autoref{ATensor} and \eqref{torsion01} for $X,Y\in\mathcal{H}$
\[
\mathcal{A}_XY=-\frac 12 T(X,Y)_\mathcal{V}=-\alpha\sum_{i=1}^3 \Phi_i(X,Y)\xi_i.
\]
Let $e_1,\dots, e_{4n}$ be a local adapted frame for $\mathcal{H}$, i.e. an orthonormal frame such that
$\varphi_1e_{4p+1}=e_{4p+2}$, $\varphi_2e_{4p+1}=e_{4p+3}$ and $\varphi_3e_{4p+1}=e_{4p+4}$, $0\leq p\leq n-1$. Then
\[
\sum_{i,j=1}^{4n}g(A_{e_i}e_j,A_{e_i}e_j)=\alpha^2\sum_{i,j=1}^{4n}\sum_{k=1}^3\Phi^2_k(e_i,e_j)=\alpha^2\cdot 3\cdot 4n=12n\alpha^2.
\]
From the O'Neill identities one obtains for submersions with $\mathcal{T}=0$ the Ricci curvature identity
\cite[Proposition 9.36]{BesseEinstein}
\[
\mathrm{Ric}_g(X,Y)=\pi^*\mathrm{Ric}_{g_N}(X,Y)-2\sum_{j=1}^{4n}g(A_Xe_j,A_Ye_j).
\]
The Ricci curvature of $M$ is
$\mathrm{Ric}_g=2\alpha(2\delta(n+2)-3\alpha)g+2(\alpha-\delta)((2n+3)\alpha-\delta)g|_{\mathcal{V}}$ by
\cite[Proposition 2.3.3]{AgrDil}. Combining both identities we have
\begin{align*}
\mathrm{scal}_{g_N}&=\sum_{i=1}^{4n}\mathrm{Ric}_{g_N}(\pi_*e_i,\pi_*e_i)=\sum_{i=1}^{4n}\mathrm{Ric}_g(e_i,e_i)
+2\sum_{i,j=1}^{4n}g(A_{e_i}e_j,A_{e_i}e_j)\\
&=4n\cdot 2\alpha(2\delta(n+2)-3\alpha)+24n\alpha^2=16n(n+2)\alpha\delta.\qedhere
\end{align*}
\end{proof}
\begin{rem}
In particular, we recover the scalar curvature result $\mathrm{scal}_{g_N}=16n(n+2)$ known in the $3$-Sasaki case \cite[Theorem 13.3.13]{Boyer&Galicki}.
\end{rem}
%
\section[Construction of homogeneous $3$-$(\alpha,\delta)$-Sasaki manifolds]{Construction of non-degenerate homogeneous $3$-$(\alpha,\delta)$-Sa\-saki manifolds}
%
For homogeneous $3$-$(\alpha,\delta)$-Sasaki manifolds the canonical submersion is invariant. Hence, the base
$N$ is a homogeneous qK space. In the non-degenerate case \autoref{scal} shows that $N$ is a homogeneous quaternionic
K\"ahler space of non-vanishing scalar curvature. There are two families of such spaces known: Compact qK symmetric
spaces, named Wolf spaces, their non-compact duals and Alekseevsky spaces. The latter are homogeneous qK spaces
admitting a solvable transitive group action. D.~Alekseevsky conjectured that all homogeneous qK spaces with
negative scalar curvature are Alekseevsky spaces \cite{Alekseevsky}. In particular, the class of non-compact qK
symmetric spaces is included in the class of Alekseevsky.
We will give independent constructions of homogeneous $3$-$(\alpha,\delta)$-Sasaki manifolds over symmetric base
spaces and such fibering over Alekseevsky spaces.
\subsection{Homogeneous $3$-$(\alpha,\delta)$-Sasaki manifolds over symmetric
quaternionic K\"ahler spaces}
%
%
Let $G/G_0$ be a real symmetric space, i.e. $\mathfrak g=\mathfrak g_0 \oplus\mathfrak g_1$ with
$[\mathfrak g_i,\mathfrak g_j]\subset \mathfrak g_{i+j}$ on the level of Lie algebras. Suppose there exists a
connected subgroup $H\subset G_0$ such that $\mathfrak g_0$ splits into a direct sum of Lie algebras
$\mathfrak g_0=\mathfrak{h}\oplus\mathfrak{sp}(1)$. Finally, assume that
$\mathfrak g_1^{\mathbb{C}}=\mathbb C^2\otimes_{\mathbb{C}} W$, for some $\mathfrak h^{\mathbb{C}}$-module $W$ of
$\dim_\mathbb{C}W=2n$, and the adjoint action of $\mathfrak g_0^\mathbb{C}$ is given by
\[
\mathfrak h^{\mathbb{C}} \oplus \mathfrak{sp}(1)^{\mathbb{C}} \ni (A,B)\cdot ((z_1,z_2)\otimes w)
=B(z_1,z_2)\otimes w + (z_1,z_2)\otimes Aw,
\]
where $\mathfrak{sp}(1)^{\mathbb{C}}=\mathfrak{su}(2)^{\mathbb{C}}=\mathfrak{sl}(2,\mathbb{C})$ acts by multiplication
on $\mathbb C^2$. We will call $(G,G_0,H)$ \emph{generalized 3-Sasaki data}.
\begin{rem}
\begin{enumerate}[a)]
\item For compact $G$ this is called $3$-Sasaki data in \cite[Definition 12, p. 12]{Draperetall}.
\item Consider the homogeneous space $M=G/H$. The assumptions above imply that
$\mathfrak{g}=\mathfrak{h}\oplus\mathfrak{m}$ with $\mathfrak{m}=\mathfrak{sp}(1)\oplus\mathfrak{g}_1$ is a
reductive decomposition. We rename the spaces $\V=\mathfrak{sp}(1)$ and $\H=\mathfrak{g}_1$
to express their role as vertical and horizontal subspaces of a $3$-$(\alpha,\delta)$-Sasaki manifold via
$T_pM\cong \mathfrak{m}$. For clarity we restate the bracket relations between all these spaces. We have
$\mathfrak{g}=\mathfrak{h}\oplus\V\oplus\H$, where $\mathfrak{h}$ and $\V$ are
commuting subalgebras. Thus they form the joint subalgebra
$\mathfrak{h}\oplus\V=\mathfrak{g}_0\subset\mathfrak{g}$. The full set of commutator relations is
\begin{gather*}
[\mathfrak{h},\mathfrak{h}]\subset\mathfrak{h}\qquad
[\V,\V]\subset\V\qquad
[\mathfrak{h},\V]=0\\
[\mathfrak{h},\H]\subset\H\qquad
[\V,\H]\subset\H\qquad
[\H,\H]\subset\V\oplus\mathfrak{h}.
\end{gather*}
In particular, both $\V$ and $\H$ are $\mathfrak{h}$-invariant.
\item Since $G/G_0$ is a symmetric space
there exists a dual symmetric space $G^*/G_0$ for every generalized $3$-Sasaki data $(G, G_0, H)$. The Lie algebras can then be identified as
\begin{equation}\label{subspaces}
\mathfrak{g}^*=\mathfrak{h}\oplus\V\oplus i\H\subset\mathfrak{g}^\mathbb{C}.
\end{equation}
It is then clear that $(G^*,G_0,H)$ is generalized $3$-Sasaki data as well. This yields pairs of compact and non-compact generalized $3$-Sasaki data. For clarity
we will denote the compact top Lie group by $G$ and the non-compact one by $G^*$.
\item By \cite{Draperetall} any $3$-Sasaki data gives rise to a homogeneous $3$-Sasaki manifold. They were
completely determined in \cite{BGM94} by the fact that they are fiber-bundles over the quaternionic K\"ahler base
space $G/G_0$. The non-compact $G^*$ are thus given as the isometry group of the non-compact
quaternionic K\"ahler
symmetric spaces \cite[p. 409]{BesseEinstein}. Alltogether, we obtain \autoref{table.3-sasaki-data}.

\smallskip

\raa{1.3}
\begin{table}[h]
\[\begin{array}{cccccc}
\toprule
\boldsymbol{G} & \boldsymbol{G^*} & \boldsymbol{H} & \boldsymbol{G_0} & \boldsymbol{\dim} &\\
\midrule
\mathrm{Sp}(n+1) & \mathrm{Sp}(n,1) & \mathrm{Sp}(n) & \mathrm{Sp}(n)\mathrm{Sp}(1) & 4n+3 & n\geq0\\
\midrule
\mathrm{SU}(n+2) & \mathrm{SU}(n,2) & S(\mathrm{U}(n)\times \mathrm{U}(1)) & S(\mathrm{U}(n)\times \mathrm{U}(2)) & 4n+3 &n\geq 1\\
\midrule
\mathrm{SO}(n+4) & \mathrm{SO}(n,4) & \mathrm{SO}(n)\times \mathrm{Sp}(1) & \mathrm{SO}(n)\mathrm{SO}(4) & 4n+3 & n\geq 3\\
\midrule
\mathrm{G}_2 & \mathrm{G}_2^2 & \mathrm{Sp}(1) & \mathrm{SO}(4) & 11&\\
\midrule
\mathrm{F}_4 & \mathrm{F}_4^{-20} & \mathrm{Sp}(3) &\mathrm{Sp}(3)\mathrm{Sp}(1) &31&\\
\midrule
\mathrm{E}_6 & \mathrm{E}_6^2 & \mathrm{SU}(6) & \mathrm{SU}(6)\mathrm{Sp}(1) & 43 & \\
\midrule
\mathrm{E}_7 & \mathrm{E}_7^{-5} & \mathrm{Spin}(12) & \mathrm{Spin}(12)\mathrm{Sp}(1) & 67 & \\
\midrule
\mathrm{E}_8 & \mathrm{E}_8^{-24} & \mathrm{E}_7 &\mathrm{E}_7\mathrm{Sp}(1) &115 &\\
\bottomrule
\end{array}
\]
\caption{Complete table of generalized $3$-Sasaki data}\label{table.3-sasaki-data}
\end{table}

\end{enumerate}
\end{rem}
\begin{theorem}\label{construction}
Consider some generalized $3$-Sasaki data $(G,G_0,H)$ and $0\neq\alpha,\delta\in\R$. Additionally suppose $\alpha\delta>0$ if
$G$ is compact and $\alpha\delta<0$ if $G$ is non-compact.

Let $\kappa(X,Y)=\mathrm{tr}(\mathrm{ad}(X)\circ\mathrm{ad}(Y))$ denote the Killing form on $\mathfrak g$.
Then define the inner product $g$ on the tangent space $T_pM=T_p(G/H)\cong\mathfrak{m}$ by
\begin{align*}
g\vert_{\V}&=\frac{-\kappa}{4\delta^2(n+2)},\qquad
g\vert_{\H}=\frac{-\kappa}{8\alpha\delta (n+2)},\qquad
\V\perp\H.
\end{align*}
Let $\xi_i=\delta \sigma_i\in\V=\mathfrak{sp}(1)$, where the $\sigma_i$ are the elements of
$\mathfrak{sp}(1)=\mathfrak{su}(2)$ given by
\[
\sigma_1=
\begin{pmatrix}
i &0\\ 0&-i
\end{pmatrix},\quad
\sigma_2=
\begin{pmatrix}
0 &-1\\ 1&0
\end{pmatrix},\quad
\sigma_3=
\begin{pmatrix}
0&-i\\ -i &0
\end{pmatrix}.
\]
Define endomorphisms $\varphi_i\in\mathrm{End}_{\mathfrak{h}}(\mathfrak{m})$ for $i=1,2,3$ by
\begin{align*}
\varphi_i\vert_{\V}&=\frac 1 {2\delta} \mathrm{ad}\,\xi_i,\qquad
\varphi_i\vert_{\H}=\frac 1 \delta \mathrm{ad}\,\xi_i.
\end{align*}
Together with $\eta_i=g(\xi_i,\cdot)$ the collection $(G/H,\varphi_i,\xi_i,\eta_i,g)$ defines a
homogeneous $3$-$(\alpha,\delta)$-Sasaki structure.
\end{theorem}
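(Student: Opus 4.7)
My plan is to verify directly the four defining properties of a $3$-$(\alpha,\delta)$-Sasaki structure for the invariant tensors specified: (i) positive-definiteness of $g$; (ii) the almost $3$-contact metric compatibility relations; (iii) the differential equation $d\eta_i=2\alpha\Phi_i+2(\alpha-\delta)\eta_j\wedge\eta_k$; and (iv) $G$-invariance of the whole structure. Point (iv) is essentially built into the construction: $g$ is $\mathrm{Ad}(H)$-invariant because $\kappa$ is, each $\xi_i\in\V$ is $\mathfrak{h}$-fixed because $[\mathfrak{h},\V]=0$, and the $\varphi_i$ are $\mathfrak{h}$-equivariant by hypothesis. Only (i)--(iii) need real work.

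For (i) I would first compute $\kappa(\sigma_i,\sigma_j)=-4(n+2)\delta_{ij}$ by splitting $\mathfrak{g}=\mathfrak{h}\oplus\V\oplus\H$: the $\mathrm{ad}\,\sigma_i$-action is zero on $\mathfrak{h}$, contributes $-8\delta_{ij}$ on $\V$ via the standard $\mathfrak{sp}(1)$-adjoint, and contributes $-4n\delta_{ij}$ on $\H^{\mathbb{C}}=\mathbb{C}^2\otimes W$ because $\sigma_i$ acts only on the $\mathbb{C}^2$-factor with $\mathrm{tr}(\sigma_i\sigma_j)=-2\delta_{ij}$. This immediately yields $g(\xi_i,\xi_j)=\delta_{ij}$, i.e.\ the Reeb fields are orthonormal. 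Positivity of $g|_{\V}$ is automatic since $\V$ lies in the maximal compact subalgebra in both cases; positivity of $g|_{\H}$ requires the sign hypothesis on $\alpha\delta$: in the compact case $-\kappa|_{\H}>0$ forces $\alpha\delta>0$, whereas in the non-compact dual $\H$ corresponds to $i\H\subset\mathfrak{g}^{\mathbb{C}}$ (see \eqref{subspaces}), flipping the sign of $\kappa|_{\H}$, so $\alpha\delta<0$ is exactly what is needed.

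The almost $3$-contact identities in (ii) reduce to matrix computations once one records the Pauli-like relations $\sigma_i^2=-I$, $\sigma_i\sigma_j=\sigma_k$ and $[\sigma_i,\sigma_j]=2\sigma_k$ for even $(ijk)$, which are direct from the given matrices. On $\H^{\mathbb{C}}=\mathbb{C}^2\otimes W$ the operator $\varphi_i=\mathrm{ad}\,\sigma_i$ is $\sigma_i\otimes\mathrm{id}_W$, so $\varphi_i^2|_{\H}=-I$ and $\varphi_i\varphi_j|_{\H}=\varphi_k$ are immediate; on $\V$ the factor $\tfrac{1}{2}$ in $\varphi_i|_{\V}=\tfrac{1}{2\delta}\mathrm{ad}\,\xi_i$ produces $\varphi_i\xi_j=\epsilon_{ijk}\xi_k$, from which the remaining compatibilities $\varphi_i^2=-I+\eta_i\otimes\xi_i$, $\varphi_i\varphi_j=\varphi_k-\eta_j\otimes\xi_i$, $\varphi_i\xi_i=\eta_i\circ\varphi_i=0$ follow by inspection. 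The Reeb commutators $[\xi_i,\xi_j]=\delta^2[\sigma_i,\sigma_j]=2\delta\xi_k$ reproduce the expected bracket law. Skew-symmetry of $\varphi_i$ with respect to $g$, and hence the metric compatibility $g(\varphi_iX,\varphi_iY)=g(X,Y)-\eta_i(X)\eta_i(Y)$, follows from $\mathrm{ad}$-invariance of $\kappa$; the extra factor $\tfrac{1}{2}$ appearing in $\varphi_i|_{\V}$ is precisely compensated by the factor of two between the scalings $1/(4\delta^2(n+2))$ on $\V$ and $1/(8\alpha\delta(n+2))$ on $\H$.

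Item (iii) is the technical heart of the proof. I would use the Maurer--Cartan-type formula $d\eta_i(X,Y)=-\eta_i([X,Y]_{\mathfrak{m}})$ valid for invariant forms on a reductive homogeneous space and split into cases. The mixed case $X\in\V$, $Y\in\H$ is trivial: $[\V,\H]\subset\H$ lies in the kernel of $\eta_i$, and $\Phi_i$ vanishes on mixed pairs, so both sides are zero. The purely vertical case follows by substituting $[\xi_j,\xi_k]=2\delta\xi_l$ together with the explicit action of $\varphi_i|_{\V}$ into both sides. The purely horizontal case for $X,Y\in\H$ is the main expected obstacle: here the identity reduces to $g([X,Y]_{\V},\xi_i)=-2\alpha\, g(\varphi_iX,Y)$, and I would establish it using $\mathrm{ad}$-invariance of the Killing form in the chain $\kappa([X,Y],\xi_i)=\kappa(Y,[\xi_i,X])=\delta\,\kappa(Y,\varphi_iX)$, combined with the normalizations of $g$ on $\V$ and on $\H$. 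The coefficient $2\alpha$ emerges precisely because the ratio of these two normalizations is $2\alpha/\delta$; this is the unique place where the specific numerical factors $4\delta^2(n+2)$ and $8\alpha\delta(n+2)$ in the definition of $g$ are indispensable, and it is the calculation that justifies the somewhat unmotivated-looking constants in the statement.
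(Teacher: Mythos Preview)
Your proposal is correct and follows essentially the same approach as the paper: both verify positivity of $g$ via the sign of $\kappa|_{\H}$, compute $\kappa(\sigma_i,\sigma_j)=-4(n+2)\delta_{ij}$ by decomposing the trace over $\mathfrak{h}\oplus\V\oplus\H$, derive the almost $3$-contact identities from the Pauli relations and the $\mathbb{C}^2\otimes W$ description of $\H^{\mathbb{C}}$, and establish $d\eta_i=2\alpha\Phi_i+2(\alpha-\delta)\eta_j\wedge\eta_k$ case by case via $d\eta_i(X,Y)=-\eta_i([X,Y])$ together with $\mathrm{ad}$-invariance of $\kappa$. Your aside that the factor $\tfrac{1}{2}$ in $\varphi_i|_{\V}$ is ``compensated'' by the ratio of the two metric scalings is harmless but unnecessary for skew-symmetry, since $\varphi_i$ preserves the splitting $\V\oplus\H$ and the check reduces to each summand separately.
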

Before we proceed with the proof, we collect some observations.
\begin{rem}
\begin{enumerate}[a)]\label{Rem_Constr}
\item In case $\alpha\delta>0$ the given $3$-$(\alpha,\delta)$-Sasaki structure is obtained via a $\H$-homothetic
deformation with parameters $a=\frac 1 {\alpha\delta}$, $b=\frac 1 {\delta ^2}-\frac 1 {\alpha\delta}$,
$c=\frac 1 \delta$ from the $3$-Sasaki structure given in \cite{Draperetall}.
\item Consider a homogeneous $3$-$(\alpha,\delta)$-Sasaki manifold $(G/H, \varphi_i, \xi_i, \eta_i, g)$ with
$\alpha\delta>0$ such that the iso\-tropy group $H$ is connected, i.e. $G/H\neq \mathbb{RP}^{4n+3}$. Then a $\H$-homothetic
deformation with $a=\alpha\delta$ and  $c=\delta$ induces a homogeneous $3$-Sasaki manifold with
 $G/H\neq \mathbb{RP}^{4n+3}$ and thus is given by the model in \cite{Draperetall}. By definition of $\mathcal H$-homothetic
deformations the above inverse deformation will restore the original objects. Thus, $(G/H, \varphi_i, \xi_i, \eta_i, g)$
is given by the construction in the theorem.
\item  Usually the real representation $\mathfrak{g}_1$ of $\mathfrak{h}$ will be irreducible and will only become
reducible when complexified, 
thus we cannot describe the action of $\V=\mathfrak{sp}(1)$ on $\H$ easily, but from the
complexified action we still find that the relations $\mathrm{ad}\,\xi_i^2=-\delta^2\mathrm{id}$ and
$\mathrm{ad}\,\xi_i\circ\mathrm{ad}\,\xi_j=\pm\delta\mathrm{ad}\,\xi_k$ when $(ijk)$ is an even, resp.\
odd permutation of $(123)$ hold on $\H$. 
\item The Riemannian metric on $\H$ is a fixed multiple of the Killing form on $\mathfrak{g}$ and thus
 the projection onto the symmetric orbit space
\[
G/H\rightarrow G/G_0
\]
is a Riemannian submersion. Indeed, this is the canonical submersion obtained in \autoref{qkbase}.
\item The real projective space $\mathbb{R}P^{4n+3}=\frac{\mathrm{Sp}(n+1)}{\mathrm{Sp}(n)\times\mathbb{Z}_2}$ and its non compact dual $\frac{\mathrm{Sp}(n,1)}{\mathrm{Sp}(n)\times\mathbb{Z}_2}$ also admit $3$-$(\alpha,\delta)$-Sasaki structures. They are obtained as the quotient of $S^{4n+3}=\frac{\mathrm{Sp}(n+1)}{\mathrm{Sp}(n)}$, resp.\ $\frac{\mathrm{Sp}(n,1)}{\mathrm{Sp}(n)}$ by the action of $\mathbb{Z}_2$ inside the fiber. Since the action is discrete these spaces cannot be discerned in the Lie algebra picture. Note that all relevant tensors are invariant under the $\mathbb{Z}_2$ action and thus local results obtained for $S^{4n+3}=\frac{\mathrm{Sp}(n+1)}{\mathrm{Sp}(n)}$, resp.\ $\frac{\mathrm{Sp}(n,1)}{\mathrm{Sp}(n)}$, remain true on $\mathbb{R}P^{4n+3}$ and its non compact dual.
\item Since the metric is a multiple of the Killing form and the Killing form is $\mathrm{ad}$-invariant $[X,\,\cdot\;]$ will be metric if it preserves $\mathcal{H}$ and $\mathcal{V}$. This is precisely the case if $X\in\mathcal{V}$. For $X\in\mathcal{H}$, we compute with $Y\in\mathcal{V}, Z\in\mathcal{H}$
\[
g([X,Y],Z)=\frac{-1}{8\alpha\delta (n+2)}\kappa([X,Y],Z)=\frac{1}{8\alpha\delta (n+2)}\kappa(Y,[X,Z])=-\frac{\delta}{2\alpha}g(Y,[X,Z]).
\]
Thus  $[X,\,\cdot\;]\in \mathfrak{so}(\mathfrak{m})$ if and only if $\delta=2\alpha$, i.e. we are in the parallel case. This is exactly the condition that our homogeneous space is naturally reductive. This can only occur if $\alpha\delta>0$, i.e. we are in the positive case.
\end{enumerate}
\end{rem}
\begin{proof}[Proof (of Theorem \ref{construction})]
If $G$ is compact, $\kappa<0$. If $G$ is of non-compact type, we have $\kappa|_\V<0$ while $
\kappa|_\H>0$ by \eqref{subspaces} . Thus, in both cases the given metric $g$ is indeed positive definite.

\autoref{Rem_Constr} shows $\mathrm{tr}(\mathrm{ad}^2\xi_i\vert_{\H})=\mathrm{tr}(-\delta^2\mathrm{id}\vert_{\H})=-4n\delta^2$. On $\V$ we have
\[
[\xi_i,[\xi_i,\xi_j]]=\pm2\delta[\xi_i,\xi_k]
=-4\delta^2\xi_j,
\]
whenever $(ijk)$ is an even, respectively odd, permutation of $(123)$. Thus, $\mathrm{tr}(\mathrm{ad}^2\xi_i\vert_{\V})=-8\delta^2$ and therefore
\[
g(\xi_i,\xi_i)= \frac{-\mathrm{tr}(\mathrm{ad}^2\xi_i)} {4\delta^2(n+2)}
=\frac{-\mathrm{tr}(\mathrm{ad}^2\xi_i\vert_{\H})-\mathrm{tr}(\mathrm{ad}^2\xi_i\vert_{\V})}{4\delta^2(n+2)}
=\frac{8\delta^2+4\delta^2n}{4\delta^2(n+2)}=1.
\]
On the contrary we have $\mathrm{tr}(\ad\xi_i\circ\ad\xi_j\vert_{\H})=\mathrm{tr}(\pm\ad\xi_k\vert_{\H})=0$
as its trace on the complexification vanishes. And similar $[\xi_i,[\xi_j,\xi_k]]=0$ if $(ijk)$ is any permutation
of $(123)$ or $[\xi_i,[\xi_j,\xi_k]]=4\delta^2\xi_j$ if $i=k\neq j$. In any case $\mathrm{tr}(\ad\xi_i\circ\ad\xi_j)=0$ and, hence, $g(\xi_i,\xi_j)=0$ if $i\neq j$.

Next we check that the endomorphisms $\varphi_i$ are metric almost complex structures on the complement to $\xi_i$. Note that they vanish on their corresponding $\xi_i$. Furthermore,
\begin{align*}
\varphi_i^2(\xi_j)&=\frac 1{4\delta^2}[\xi_i,[\xi_i,\xi_j]]=-\xi_j,\\
\varphi_i^2\vert_{\H}&=\frac 1{\delta^2}\ad\!^2\xi_i\vert_{\H}=\frac{-\delta^2}{\delta^2}\mathrm{id}=-\mathrm{id}.
\end{align*}
Since $\H$ and $\V$ are invariant under $\varphi_i$ we check orthogonality on each component individually. On $\H$ use the associativity of $\kappa$ to find
\[
\kappa(\varphi_iX,\varphi_iY)=-\kappa(X,\frac 1{\delta^2} \ad\!^2\xi_i Y)=\kappa(X,Y)
\]
and thus $g(\varphi_i X,\varphi_i Y)=\frac{-\kappa(\varphi_i X,\varphi_i Y)}{8\alpha\delta(n+2)}=\frac{-\kappa(X,Y)}{8\alpha\delta^2(n+2)}=g(X,Y)$. On $\V$ we have
\[
g(\varphi_i\xi_j,\varphi_i\xi_{j^\prime})=g(\frac 1{2\delta}\ad\xi_i(\xi_j),\frac 1{2\delta}\ad\xi_i(\xi_{j^\prime}))=g(\pm\xi_k,\pm\xi_{k^\prime})=g(\xi_j,\xi_{j^\prime})
\]
if $(ijk)$, $(ij^\prime k^\prime)$ are according permutations of $(123)$ and the left side vanishes whenever $j$ or $j^\prime$ equals $i$.

Next we check the compatibility conditions of the $3$ almost contact metric structures. Suppose $(ijk)$
is an even permutation of $(123)$ then $\varphi_i\xi_j=\xi_k$ and together with the invariance of $\H$ under $\varphi_i$ we conclude $\eta_i\circ\varphi_j=\eta_k$. Further, $\varphi_i\varphi_j\vert_{\H}=\frac 1{\delta^2} \ad\xi_i\circ\ad\xi_j\vert_{\H}=\frac 1\delta\ad\xi_k|_{\H}=\varphi_k|_{\H}$ and on $\V$ we have
\begin{align*}
\varphi_i\varphi_j\xi_i&=\frac1{4\delta^2}[\xi_i,[\xi_j,\xi_i]]=\xi_j=\varphi_k\xi_i=\varphi_k\xi_i+\eta_j(\xi_i)\xi_i,\\
\varphi_i\varphi_j\xi_j&=0=\xi_i-\xi_i=\varphi_k\xi_j-\eta_j(\xi_j)\xi_i,\\
\varphi_i\varphi_j\xi_k&=\frac 1{4\delta^2}[\xi_i,[\xi_j,\xi_k]]=\frac 1{2\delta}[\xi_i,\xi_i]=0=\varphi_k\xi_k+\eta_j(\xi_k)\xi_i.
\end{align*}
We have thus shown that the given structure is a homogeneous almost $3$-contact metric structure. It remains to show the $3$-$(\alpha,\delta)$-Sasaki condition $\dd\eta_i=2\alpha\Phi_i +2(\alpha-\delta)\eta_j\wedge\eta_k$, for any even permutation $(ijk)$ of $(123)$. We show this case by case. Note that the last summand vanishes whenever either entry is in $\H$. Let $X\in\H$. Then, since $\ad\xi_j X\in\H$,
\begin{align*}
\dd\eta_i(\xi_j,X)&=\xi_j(\eta_i(X))-X(\eta_i(\xi_j))-\eta_i(\ad\xi_j X)=-\eta_i(\ad\xi_j X)=0,\\
2\alpha\Phi_i(\xi_j,X)&=2\alpha g(\xi_j,\varphi_i X)=\frac {2\alpha}\delta g(\xi_j,\ad\xi_i X)=0.
\end{align*}
For $X,Y\in\H$ we use associativity of $\kappa$
\begin{align*}
\dd\eta_i(X,Y)&=X(\eta_i(Y))-Y(\eta_i(X))-\eta_i([X,Y])=-g(\xi_i,[X,Y])\\
&=\frac {1}{4\delta^2(n+2)}\kappa(\xi_i,[X,Y])=\frac {-1}{4\delta^2(n+2)}\kappa(\ad\xi_iY,X),\\
2\alpha\Phi_i(X,Y)&=2\alpha g(X,\varphi_i Y)=\frac{2\alpha}\delta g(X,\ad\xi_i Y)=\frac{-2\alpha}{8\alpha\delta^2(n+2)}\kappa(X,\ad\xi_i Y)\\
&=\frac{-1}{4\delta^2(n+2)}\kappa(X,\ad\xi_i Y).
\end{align*}
Finally, we have
\begin{equation}
\begin{aligned}\label{3adonV}
\dd\eta_i(\xi_j,\xi_k)&=\xi_j(\eta_i(\xi_k))-\xi_k(\eta_i(\xi_j))-\eta_i([\xi_j,\xi_k])=-\eta_i(2\delta\xi_i)=-2\delta,\\
2\alpha\Phi_i(\xi_j,\xi_k)&=2\alpha g(\xi_j,\varphi_i\xi_k)=- 2\alpha g(\xi_j,\xi_j)=- 2\alpha,\\
2(\alpha-\delta)\eta_{i+1}\wedge\eta_{i+2}(\xi_j,\xi_k)&=2(\alpha-\delta)=2\alpha-2\delta.\qedhere
\end{aligned}
\end{equation}
\end{proof}
%

\subsection{Negative homogeneous $3$-$(\alpha,\delta)$-Sasaki manifolds over Alekseevsky spaces}
%
In order to construct homogeneous $3$-$(\alpha,\delta)$-Sasaki manifolds we recall the setup in the unified construction of Alekseevsky spaces due to V.~Cort\'es \cite{CortesNewConstr}. Let $q\in \N$. Set $V=\R^{3,q}$ the real vector space with signature $(3,q)$. Let $\Cl^0(V)$ denote the even Clifford algebra over $V$. Depending on $q\mod 4$ there exist exactly one or two inequivalent
irreducible $\Cl^0(V)$-modules. Accordingly, let $l\in\N$, if $q\not\equiv 3\mod 4$, or $l^+,l^-\in\N$,
if $q\equiv 3\mod 4$. Then set
\[
\mathfrak{g}=\so(V)\oplus V\oplus \R D\oplus W,
\]
where $W$ is the sum of $l$ equivalent irreducible $\Cl^0(V)$-modules (or the sum of $l^+$, $l^-$
irreducible $\Cl^0(V)$-modules if there are two inequivalent ones) and $D$ a derivation with eigenvalue
decomposition $\so(V)\oplus V\oplus W$ and respective eigenvalues $(0,1,1/2)$.
The action of $\so(V)$ on $V$ is given by the standard representation and $\so(V)$
acts on $W$ via the isomorphism $\so(V)\cong\spin(V)\subset\Cl^0(V)$ $e\wedge e^\prime\mapsto -\frac{1}{2}ee^\prime$ if $e,e^\prime$ are orthogonal.
$V$ commutes with itself and $W$. Finally the commutators $[W,W]$ are given by some non-degenerate
$\so(V)$-equivariant map $\Pi\colon\Lambda^2W\to V$ where $\so(V)$ acts on $W$ as $\spin(V)$.

\begin{rem}
Note that $\Pi$ is unique up to rescaling along the irreducible summands of $W$ \cite[Theorem 5]{CortesNewConstr}.
This rescaling leads to an isomorphism of the Lie algebras $\mathfrak{g}(\Pi)$ and $\mathfrak{g}(\Pi^\prime)$
corresponding to two such maps $\Pi$ and $\Pi^\prime$. The isomorphism extends to an isomorphism of the
$3$-$(\alpha,\delta)$-Sasaki structures defined later on. Thus, we will ignore the ambiguity in $\Pi$ from here on.
\end{rem}

\begin{notation}
On $V=\R^{3,q}$ fix an ONB $\hat{e}_1,\hat{e}_2,\hat{e}_3,e_1,\dots,e_q$ with signature $(+,+,+,-,\dots, -)$.
Then with the identification $\so(V)\cong \Lambda^2 V$ we also obtain a standard basis of the space
$\so(V)$  given by $\{\hat{e}_i\wedge \hat{e}_j, \hat{e}_i\wedge e_k, e_k\wedge e_l\}_{{}^{i,j=1,2,3}_{k,l=1,\dots,3}}$.

Denote $\sigma_i=2\hat{e}_k\wedge\hat{e}_j$ for any even permutation $(ijk)$ of $(123)$. Using the identification
$\mathrm{End}(V)=V\otimes V^*$ this implies $[\sigma_i,\hat{e}_j]=2\hat{e}_k$ and $[\sigma_i,\sigma_j]=2\sigma_k$
where again $(ijk)$ is an even permutation of $(123)$.

We further set $\mathcal{V}=\so(3)\subset\so(3,q)$, $\mathcal{H}_0$ the subspace generated by the elements
$D$ and $\hat{e}_i+\sigma_i$ and $\mathcal{H}_1$ the subspace generated by $e_1,\dots, e_q\in V$ and $e_i\wedge \hat{e}_j\in \so(3,q)$.
\end{notation}

The $4$-dimensional spaces $\mathcal{H}_0$ and $\langle e_l, e_l\wedge\hat{e}_j\rangle\subset \mathcal{H}_1$
will form the quaternionic subspaces inside $\so(V)\oplus V\oplus \R D\subset\mathfrak{g}$.
Accordingly, we show that they have the only commutators with non-trivial $\mathcal{V}$-part.

\begin{lemma}\label{Commutators}
The only non-trivial projections on $\mathcal{V}$ of commutators are
\begin{gather*}
\pi_\mathcal{V}([\sigma_i,\sigma_j])=\pm2\sigma_k,\quad
\pi_\mathcal{V}([D,\hat{e}_i+\sigma_i])=-\sigma_i, \quad
\pi_\mathcal{V}([\hat{e}_i+\sigma_i,\hat{e}_j+\sigma_j])=\mp 2\sigma_k,\\
\pi_\mathcal{V}([e_l,\hat{e}_i\wedge e_l])=-\sigma_i,\quad
\pi_\mathcal{V}([\hat{e}_i\wedge e_l,\hat{e}_j\wedge e_l])=\pm\frac{1}{2}\sigma_k,\quad
\pi_\mathcal{V}([w_1,w_2])=\pi_\mathcal{V}(\Pi(w_1,w_2))
\end{gather*}
for all permutations $(ijk)$  of $(123)$ with $\pm$ indicating the sign of the permutation,
$l=1,\dots,q$ and $w_1,w_2\in W$.
\end{lemma}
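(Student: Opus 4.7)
The plan is to prove the lemma by systematic case analysis over all brackets of generators in $\mathfrak{g}=\mathfrak{so}(V)\oplus V\oplus\mathbb{R}D\oplus W$. First I would unpack the action of $\pi_{\mathcal{V}}$. By the definitions of $\mathcal{V}$, $\mathcal{H}_0$, $\mathcal{H}_1$ and of the isotropy (which contains $\mathfrak{so}(q)=\langle e_k\wedge e_l\rangle$), the projection sends every generator to zero \emph{except} for $\sigma_i\mapsto\sigma_i$ and, crucially, $\hat{e}_i=(\hat{e}_i+\sigma_i)-\sigma_i\mapsto -\sigma_i$. On $e_k$, $\hat{e}_i\wedge e_k$, $e_k\wedge e_l$, $D$, and on any $w\in W$ the projection vanishes.

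Next I would exploit the structural identities $[V,V]=0$, $[V,W]=0$, $[D,\mathfrak{so}(V)]=0$, $[\mathfrak{so}(V),W]\subset W$, $[D,W]\subset W$, and $[W,W]\subset V$ (with $[w_1,w_2]=\Pi(w_1,w_2)$), together with $[\mathfrak{so}(V),\mathfrak{so}(V)]\subset\mathfrak{so}(V)$, $[\mathfrak{so}(V),V]\subset V$, $[D,V]\subset V$, to discard every bracket whose output lies entirely in $W$, in $\mathcal{H}_0\oplus\mathcal{H}_1$, or in $\mathfrak{so}(q)\subset\mathfrak{h}$. This leaves only a handful of cases. The non-trivial ones give the six listed items: $[\sigma_i,\sigma_j]=\pm 2\sigma_k$ is read off from the stated $\mathfrak{sp}(1)$-relations; $[D,\hat{e}_i+\sigma_i]=\hat{e}_i$ from the eigenvalue data, projecting to $-\sigma_i$; the expansion $[\hat{e}_i+\sigma_i,\hat{e}_j+\sigma_j]=[\sigma_i,\hat{e}_j]+[\hat{e}_i,\sigma_j]+[\sigma_i,\sigma_j]=\pm 4\hat{e}_k\pm 2\sigma_k$ projects to $\mp 2\sigma_k$; the computation $[e_l,\hat{e}_i\wedge e_l]=-(\hat{e}_i\wedge e_l)(e_l)=\hat{e}_i$ exploits $\langle e_l,e_l\rangle=-1$ and projects to $-\sigma_i$; a direct $\mathfrak{so}(V)$-endomorphism calculation for $[\hat{e}_i\wedge e_l,\hat{e}_j\wedge e_l]$ shows it is proportional to $\hat{e}_i\wedge\hat{e}_j=\pm\tfrac12\sigma_k$; and $[w_1,w_2]=\Pi(w_1,w_2)\in V$ by definition of $\Pi$, giving item 6 tautologically.

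All remaining bracket types, for instance $[\sigma_i,\hat{e}_j+\sigma_j]$ (which cancels to zero after projection), $[\sigma_i,e_k]$, $[\sigma_i,\hat{e}_j\wedge e_k]$, $[\hat{e}_i+\sigma_i,\hat{e}_j\wedge e_k]$, $[\hat{e}_i+\sigma_i,e_k]$, $[\hat{e}_i+\sigma_i,w]$, $[\hat{e}_i\wedge e_k,\hat{e}_j\wedge e_l]$ with $k\neq l$, $[\hat{e}_i\wedge e_k,w]$, $[D,e_k]$, $[D,\hat{e}_i\wedge e_k]$, produce elements lying in $W$, $\mathcal{H}_1$, or $\mathfrak{so}(q)$, all of which project trivially to $\mathcal{V}$. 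The only real obstacle is bookkeeping: fixing a consistent convention for the identification $\Lambda^2V\cong\mathfrak{so}(V)$ and carefully tracking the parity of the permutation $(ijk)$ throughout. No conceptual difficulty is expected.
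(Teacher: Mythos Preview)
Your proposal is correct and follows essentially the same route as the paper: a systematic case-by-case enumeration of the commutators of the chosen generators of $\mathfrak{g}$, followed by reading off which ones have non-trivial $\mathcal{V}$-component via the decomposition $\hat{e}_i=(\hat{e}_i+\sigma_i)-\sigma_i$. The paper simply tabulates all the brackets explicitly, while you organise the argument slightly more conceptually by first isolating how $\pi_{\mathcal V}$ acts and then using the coarse structural relations to discard most cases; the computations for the six surviving items are identical.
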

\begin{proof}
The full list of commutators of basis vectors is
\begin{align*}
[\sigma_i,\sigma_j]&=\pm2\sigma_k, &
[\sigma_i,D]&=0, &
[\hat{e}_i+\sigma_i,\hat{e}_i\wedge e_l]&=e_l,&
[\sigma_i,\hat{e}_j+\sigma_j]&=\pm2(\hat{e}_k+\sigma_k),\\
[\sigma_i,e_l]&=0,&
[\sigma_i,\hat{e}_i\wedge e_l]&=0,&
[\hat{e}_i\wedge e_l, W]&=W, &
[\sigma_i,\hat{e}_j\wedge e_l]&=\pm2\hat{e}_k\wedge e_l,\\
[D,e_l]&=e_l,&
[D,\hat{e}_i\wedge e_l]&=0,&
[\hat{e}_i\wedge e_l,\hat{e}_j\wedge e_m]&=0,&
[D,\hat{e}_i+\sigma_i]&=\hat{e}_i=(\hat{e}_i+\sigma_i)-\sigma_i,\\
[D,W]&=W,&
[\sigma_i,\hat{e}_i+\sigma_i]&=0, &
[\hat{e}_i+\sigma_i,W]&=W,&
[\hat{e}_i+\sigma_i,\hat{e}_j\wedge e_l]&=\pm2\hat{e}_k\wedge e_l,\\
[\sigma_i,W]&= W,&
[\hat{e}_i+\sigma_i,e_l]&=0,&
[\hat{e}_i\wedge e_l,\hat{e}_i\wedge e_m]&=-e_l\wedge e_m,&
[\hat{e}_i\wedge e_l,\hat{e}_j\wedge e_l]&=-\hat{e}_i\wedge\hat{e}_j=\pm\frac{1}{2}\sigma_k\\
[e_l,e_m]&=0, &
[e_l, W]&=W, &
[e_l,\hat{e_i}\wedge e_m]&=0, &
[e_l,\hat{e_i}\wedge e_l]&=\hat{e_i}=(\hat{e_i}+\sigma_i)-\sigma_i
\end{align*}
and finally
\bdm
[\hat{e}_i+\sigma_i,\hat{e}_j+\sigma_j]\ = \
[\sigma_i,\hat{e}_j]-[\sigma_j,\hat{e}_i]+[\sigma_i,\sigma_j] \ = \
\pm4\hat{e}_k\pm2\sigma_k=\pm4(\hat{e}_k+\sigma_k)\mp2\sigma_k,
\edm
where $(ijk)$ is a permutation of $(123)$ with $\pm$ indicating the sign of the permutation and $l,m=1,\dots,q$ with $l\neq m$. For the commutator $[W,W]$ we have $[w_1,w_2]=\Pi(w_1,w_2)\in V\subset\mathcal{H}_0\oplus\mathcal{H}_1\oplus\mathcal{V}$.
\end{proof}

By \cite[Proposition 3]{CortesNewConstr} the adjoint action $\mathfrak{g}\curvearrowright \mathfrak{r}=\R D\oplus V\oplus W\subset \mathfrak{g}$ is faithful. Thus, $\mathfrak{g}$ is a subalgebra $\mathfrak{g}\subset \mathrm{der}(\mathfrak{r})$. Set $G$ the subgroup $G\subset \mathrm{Aut}(\mathfrak{r})$ with Lie Algebra $\mathfrak{g}$. Let $\mathfrak{h}=\so(q)\subset\so(V)\subset \mathfrak{g}$ and $H\subset G$ the corresponding connected subgroup. Then both $G$ and $H$ are closed subgroups of $\mathrm{Aut}(\mathfrak{r})$. This follows from \cite[Corollary 3]{CortesNewConstr} and the fact that $H$ is closed in $\mathrm{Spin}_0(V)\subset G$. In particular, $G/H$ is a homogeneous space. We now define the desired negative $3$-$(\alpha,\delta)$-Sasaki structure on $M=G/H$.

\begin{theorem}\label{ThmAlekseevsky}
Let $\alpha,\delta\in \R$ with $\alpha\delta<0$. Let $G,H$ with Lie algebras $\mathfrak{g},\mathfrak{h}$ as above. Then $\mathfrak{m}=\mathcal{V}\oplus\mathcal{H}_0\oplus\mathcal{H}_1\oplus W$ is a reductive complement to $\mathfrak{h}$ in $\mathfrak{g}$. Set
\[
\xi_1=\delta \sigma_1, \qquad\xi_2=\delta\sigma_2, \qquad\xi_3=\delta\sigma_3.
\]
Define the almost complex structures $\varphi_i\colon\mathfrak{m}\to\mathfrak{m}$ on $\mathcal{V}$, $\mathcal{H}_0$, $\mathcal{H}_1$ and $W$ individually.
For any permutation $(ijk)$ of $(123)$ with signature $\pm$ we set
\begin{gather*}
\tag{$\mathcal{V}$}\varphi_i(\sigma_j)=\pm\sigma_k,\qquad \varphi_i(\sigma_i)=0,\\
\tag{$\mathcal{H}_0$}\varphi_i(2D)=\hat{e}_i+\sigma_i,\qquad \varphi_i(\hat{e}_i+\sigma_i)=-2D,\qquad \varphi_i(\hat{e}_j+\sigma_j)=\pm(\hat{e}_k+\sigma_k),\\
\tag{$\mathcal{H}_1$}\varphi_i(e_l)=2\hat{e}_i\wedge e_l,\qquad \varphi_i(2\hat{e}_i\wedge e_l)=-e_l,\qquad \varphi_i(\hat{e}_j\wedge e_l)=\pm \hat{e}_k\wedge e_l,\\
\tag{$W$}\varphi_i|_W=\rho(\sigma_i),
\end{gather*}
where $\rho$ is the Clifford-multiplication on $W$.

Define a scalar product $g_{[e]}$ by declaring the following vectors to be an orthonormal basis of $\mathcal{V}\oplus \mathcal{H}_0\oplus \mathcal{H}_1$:
\begin{gather*}
\delta\sigma_i,
\sqrt{-4\alpha\delta}D,
\sqrt{-\alpha\delta}(\hat{e}_i+\sigma_i),
\sqrt{-4\alpha\delta}\,\hat{e}_i\wedge e_l,
\sqrt{-\alpha\delta}\,e_l.
\end{gather*}
On $W$ we set the scalar product
\[
g_{[e]}|_{W\times W}(s,t)=(-2\alpha\delta)^{-1}b(s,t)\coloneqq(-2\alpha\delta)^{-1}\langle \hat{e}_i,\Pi(\rho(\hat{e}_j\hat{e}_k)s,t)\rangle,
\]
where $\langle,\rangle$ is the scalar product on $V$ and $(ijk)$ is any even permutation of $(123)$. We set $W$ orthogonal to $\mathcal{V}\oplus\mathcal{H}_0\oplus\mathcal{H}_1$. Set $\eta_i=g(\xi_i,\,\cdot\,)$ the dual to $\xi_i$.

Then $(G/H,g,\xi_i,\eta_i,\varphi_i)$ defines a homogeneous $3$-$(\alpha,\delta)$-Sasaki manifold.
\end{theorem}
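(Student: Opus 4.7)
The plan is to follow the case-by-case verification scheme used for \autoref{construction}. First I check that $\mathfrak{m} = \mathcal{V} \oplus \mathcal{H}_0 \oplus \mathcal{H}_1 \oplus W$ is a reductive complement to $\mathfrak{h} = \mathfrak{so}(q)$ in $\mathfrak{g}$: since $\mathfrak{so}(q)$ commutes with $\mathcal{V} = \mathfrak{so}(3)$ and with $D$, fixes every $\hat{e}_i$ and hence every $\hat{e}_i + \sigma_i$, and acts by its defining representation on $\mathrm{span}(e_l)$ and by the induced lift on $\mathrm{span}(\hat{e}_i \wedge e_l)$, each summand is $\mathfrak{h}$-invariant. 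Positive definiteness of $g_{[e]}$ on $\mathcal{V} \oplus \mathcal{H}_0 \oplus \mathcal{H}_1$ is immediate from $-\alpha\delta > 0$; on $W$ it reduces to verifying that the bilinear form $b$ is symmetric, independent of the chosen even permutation $(ijk)$, and positive definite, all of which follow from the $\mathfrak{spin}(V)$-equivariance and non-degeneracy of the Cort\'es map $\Pi$ (cf.\ \cite[Theorem 5]{CortesNewConstr}).

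Next I verify that $(\varphi_i, \xi_i, \eta_i, g_{[e]})$ forms an almost $3$-contact metric structure by checking the algebraic identities on each $\varphi_i$-invariant summand of $\mathfrak{m}$. On $\mathcal{V}$, $\mathcal{H}_0$ and $\mathcal{H}_1$ the $\varphi_i$ are given by explicit quaternionic multiplication tables and the identities $\varphi_i^2 = -\mathrm{id} + \eta_i \otimes \xi_i$, $\varphi_i \varphi_j = \pm\varphi_k - \eta_j \otimes \xi_i$, together with the metric compatibility of $\varphi_i$, can be read off the chosen orthonormal basis. On $W$, one uses the Clifford relations $\hat{e}_a\hat{e}_b = -\hat{e}_b\hat{e}_a$, $\hat{e}_a^2 = -1$ together with the identification $\sigma_i = 2\hat{e}_k\wedge\hat{e}_j \mapsto -\hat{e}_k\hat{e}_j$ under $\mathfrak{so}(V) \cong \mathfrak{spin}(V)$ to deduce $\rho(\sigma_i) = \rho(\hat{e}_j\hat{e}_k)$ for any even $(ijk)$; from this, $\rho(\sigma_i)^2 = -\mathrm{id}$ and $\rho(\sigma_i)\rho(\sigma_j) = \rho(\sigma_k)$ follow at once, yielding the quaternionic structure on $W$. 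Metric compatibility $g_{[e]}(\varphi_i s, \varphi_i t) = g_{[e]}(s, t)$ on $W$ reduces to $\rho(\sigma_i)$-invariance of $b$, which follows from the $\mathfrak{so}(V)$-equivariance of $\Pi$ together with the fact that $\sigma_i \cdot \hat{e}_i = 0$.

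The main work is then the verification of the defining identity \eqref{differential_eta}. Using the Maurer--Cartan formula on the reductive homogeneous space, $d\eta_i(X, Y) = -\eta_i([X, Y])$ at the base point for $X, Y \in \mathfrak{m}$, so the claim reduces to computing $\pi_\mathcal{V}[X, Y]$ on all pairs of basis vectors via \autoref{Commutators}. The vertical-vertical case reproduces \eqref{3adonV} verbatim. Horizontal-vertical pairs vanish on both sides since $\mathrm{ad}\,\xi_j$ preserves each horizontal summand. For horizontal-horizontal pairs inside $\mathcal{H}_0$, $\mathcal{H}_1$, or between them, \autoref{Commutators} gives a vertical part proportional to $\sigma_k$, and the normalizations $\sqrt{-4\alpha\delta}$ and $\sqrt{-\alpha\delta}$ of the orthonormal basis have been chosen precisely so that these coefficients match $-2\alpha\Phi_i$ after dividing by $\delta$. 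The critical case is two vectors $w_1, w_2 \in W$: here $[w_1, w_2] = \Pi(w_1, w_2) \in V$, and the decomposition $\hat{e}_i = (\hat{e}_i + \sigma_i) - \sigma_i$ yields $\pi_\mathcal{V}[w_1, w_2] = -\sum_i \langle \hat{e}_i, \Pi(w_1, w_2)\rangle\,\sigma_i$; matching this against $-2\alpha\sum_i \Phi_i(w_1, w_2)\xi_i$ is precisely what the definition of $b$ achieves once one invokes $\rho(\sigma_i) = \rho(\hat{e}_j\hat{e}_k)$. This is the one step in which the Cort\'es map $\Pi$ enters essentially, and it is the principal technical obstacle; all remaining cases reduce to careful bookkeeping with \autoref{Commutators}.
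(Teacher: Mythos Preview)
Your proposal is correct and follows essentially the same case-by-case verification scheme as the paper's own proof. Two small points of care: positive definiteness of $b$ on $W$ is established in \cite[Theorem~1 and Proposition~9]{CortesNewConstr} rather than Theorem~5 (which concerns uniqueness of $\Pi$, not definiteness), and the paper also explicitly checks $H$-invariance of the $\varphi_i$ so that the almost contact structures descend to $G/H$---a step your outline leaves implicit.
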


\begin{proof}
We first note that the defined scalar product is positive definite and $\mathrm{Spin}(q)$-invariant. This is clear on $\mathcal{V}\oplus\mathcal{H}_0\oplus\mathcal{H}_1$ and it is shown for $b$ in \cite[Theorem 1 and Proposition 9]{CortesNewConstr}. Thus, the scalar product extends to an invariant Riemannian metric on $G/H$. The invariance under $H$ of the $\xi_i$ is obvious. For an invariant $3$-a.c.m. structure, it remains to check that the $\varphi_i$ are invariant as well. $\mathrm{Spin}(q)$ acts trivial on $\mathcal{V}\oplus\mathcal{H}_0$ and on $\mathcal{H}_1$ by its adjoint action on $e_l\in \R^q\subset V$. On $W$ it acts by Clifford multiplication with vectors in $\R^q$ twice, thus commuting with the Clifford multiplication defining the almost complex structures on $W$.

The endomorphisms $\varphi_i$ are compatible with the metric by definition on $\mathcal{V}\oplus\mathcal{H}_0\oplus\mathcal{H}_1$ and by $\mathrm{Spin}(q)\cdot \mathrm{Spin}(3)$-invariance of $b$ on $W$. Next we check the compatibility conditions of the $3$ almost contact structures. Again on $\mathcal{V}\oplus\mathcal{H}_0\oplus\mathcal{H}_1$ this is a direct consequence of the definition and on $W$ we have
\[
\rho(\sigma_i)\rho(\sigma_j)w=\hat{e}_k\cdot\hat{e}_j\cdot\hat{e}_i\cdot\hat{e}_k\cdot w=(-1)^2\hat{e}_j\cdot\hat{e}_k\cdot\hat{e}_k\cdot\hat{e}_i\cdot w=-\hat{e}_j\cdot\hat{e}_i\cdot w=\rho(\sigma_k)w.
\]

Finally we need to check the defining condition $\mathrm{d}\eta_i=2\alpha\Phi_i+2(\alpha-\delta)\eta_j\wedge\eta_k$. By bilinearity it suffices to check it for any pair of two basis vectors individually. On $\mathcal{V}\times\mathcal{V}$ this is exactly the same computation as in the $3$-$(\alpha,\delta)$-Sasaki structure over symmetric bases (compare \eqref{3adonV}). Apart from $\mathcal{V}\times\mathcal{V}$ the equation reduces to $\mathrm{d}\eta_i=2\alpha\Phi_i$. Note that the left hand side reduces to checking the commutators. From Lemma \ref{Commutators} and the definition of the $\varphi_i$ we see that both sides vanish for all mixed terms regarding the decomposition $\mathcal{V}\oplus\mathcal{H}_0\oplus\mathcal{H}_1\oplus W$ of the tangent space. Similarly on $\mathcal{H}_1$ if the index $l$ of $\hat{e}_i\wedge e_l$, respectively $e_l$, is not the same both sides vanish. On $\mathcal{H}_0\times\mathcal{H}_0$ we compute
\begin{align*}
\mathrm{d}\eta_i(D,\hat{e}_i+\sigma_i)&=-\eta_i([D,\hat{e}_i+\sigma_i])=-\eta_i(-\sigma_i)=\frac{1}{\delta}g(\delta\sigma_i,\delta\sigma_i)=\frac{1}{\delta},\\
2\alpha\Phi_i(D,\hat{e}_i+\sigma_i)&=2\alpha g(D,\varphi_i(\hat{e}_i+\sigma_i))=\frac{2\alpha}{-2\alpha\delta} g(\sqrt{-4\alpha\delta}\;D,-\sqrt{-\alpha\delta}\;2D)=\frac{1}{\delta}.
\end{align*}
In similar fashion for the remaining pairs in $\mathcal{H}_0\times\mathcal{H}_0$ and on $\mathcal{H}_1\times\mathcal{H}_1$ we have
\begin{align*}
\frac{2}{\delta}=-\eta_k(2\sigma_k)=\mathrm{d}\eta_k(\hat{e}_i+\sigma_i,\hat{e}_j+\sigma_j)&=2\alpha\Phi_k(\hat{e}_i+\sigma_i,\hat{e}_j+\sigma_j)=\frac{2}{\delta},\\
\frac{1}{\delta}=-\eta_i(-\sigma_i)=\mathrm{d}\eta_i(e_l,\hat{e}_i\wedge e_l)&=2\alpha\Phi_i(e_l,\hat{e}_i\wedge e_l)=\frac{1}{\delta},\\
\frac{1}{2\delta}=-\eta_k(-\frac 12\sigma_k)=\mathrm{d}\eta_k(\hat{e}_i\wedge e_l,\hat{e}_j\wedge e_l)&=2\alpha\Phi_k(\hat{e}_i\wedge e_l,\hat{e}_j\wedge e_l)=\frac{1}{2\delta}
\end{align*}
for any even permutation $(ijk)$ of $(123)$.
Finally, we look at $W\times W$. Let $w_1,w_2\in W$ and suppose $\Pi(w_1,w_2)=\sum_{r=1}^q a_re_r+\sum_{s=1}^3\hat{a}_s\hat{e}_s$. Then
\begin{align*}
\mathrm{d}\eta_i([w_1,w_2])&=-\eta_i(\Pi(w_1,w_2))=-\eta_i\left(\sum_{r=1}^q a_re_r+\sum_{s=1}^3\hat{a}_s\hat{e}_s\right)=-\eta_i\left(\sum_{s=1}^3\hat{a}_s((\hat{e}_s+\sigma_s)-\sigma_s)\right)=\frac{\hat{a}_i}{\delta}
\end{align*}
and
\begin{align*}
2\alpha\Phi_i(w_1,w_2)&=2\alpha g(w_1,\varphi_i w_2)=\frac{2\alpha}{-2\alpha\delta}\left\langle\hat{e}_i,\Pi\left(w_1,\hat{e}_j\hat{e}_k\hat{e}_j\hat{e}_k w_2\right)\right\rangle=\frac{(-1)^32\alpha}{-2\alpha\delta}\langle\hat{e_i},\Pi(w_1,w_2)\rangle\\
&=\frac{1}{\delta}\left\langle\hat{e}_i,\sum_{r=1}^q a_re_r+\sum_{s=1}^3\hat{a}_s\hat{e}_s\right\rangle=\frac{\hat{a}_i}{\delta}.
\end{align*}
This concludes the proof.
\end{proof}

\begin{remark*}
We try to motivate the definition. Recall that in \cite{CortesNewConstr} Cort\'es shows that $\so(V)\oplus V\oplus\R D$ is isomorphic to a subalgebra of $\so(4,q+1)=\Lambda^2(V\oplus \langle e^+,e^-\rangle)$, $e^+,e^-$ unit length vectors of corresponding signature, given by the inclusion
\[
\so(V)\mapsto\Lambda^2 V,\qquad V\mapsto V\wedge (e^+-e^-),\qquad D\mapsto e^+\wedge e^-.
\]
Now $\varphi_i$ is modeled  on $\so(V)\oplus V\oplus\R D$ after the adjoint action with $\hat{e}_j\wedge\hat{e}_k+\hat{e}_i\wedge e^+\in \so(3)_+\subset \so(3)_+\oplus\so(3)_-=\so(4)$ in the known $\SO(4,q+1)/\SO(q+1)\SO(3)$ setting. However, this does not exist as an inner derivative in $\mathfrak{g}$ unlike int the (semi-) simple case.

\end{remark*} 
%
\subsection{Examples}
%
We begin with an example of the construction over a symmetric Wolf space.

\begin{ex}
Our first example is the Aloff-Wallach space $W^{1,1}=\mathrm{SU(3)}/S^1=G/H$. In this case the
isotropy algebra $\mathfrak{h}$ inside $\mathfrak{g}=\mathfrak{su}(3)$ is the 1-dimensional space generated by
\[
h=\begin{bmatrix}
-i & 0&0\\
0&-i&0\\
0&0&2i
\end{bmatrix}.
\]
We locate the space $\mathfrak{sp}(1)=\mathfrak{su}(2)\subset\mathfrak{su}(3)$ as the upper left
2-by-2 block. One checks that this is a splitting of $\mathfrak{su}(3)$ as necessary. Then for
$\alpha,\delta>0$ the Reeb vector fields are given by
\begin{gather*}
\xi_1=\delta
\begin{bmatrix}
i&0&0\\
0&-i&0\\
0&0&0
\end{bmatrix},\qquad
\xi_2=\delta
\begin{bmatrix}
0&-1&0\\
1&0&0\\
0&0&0
\end{bmatrix},\qquad
\xi_3=\delta
\begin{bmatrix}
0&-i&0\\
-i&0&0\\
0&0&0
\end{bmatrix}.
\end{gather*}
On the horizontal subspace we choose a basis vector
\begin{align*}
\tilde{e}_1=\begin{bmatrix}
0&0&1\\
0&0&0\\
-1&0&0
\end{bmatrix}.
\end{align*}
Then we normalize it $g(\tilde{e}_1,\tilde{e}_1)=\frac{-6\mathrm{tr}(\tilde{e}_1\cdot
\tilde{e}_1)}{24\alpha\delta}=\frac 1 {2\alpha\delta}$, i.e. $e_1=\sqrt{2\alpha\delta}\cdot\tilde{e}_1$
and generate an adapted basis:
\begin{gather*}
e_2=\sqrt{2\alpha\delta}
\begin{bmatrix}
0&0&i\\
0&0&0\\
i&0&0
\end{bmatrix},\qquad
e_3=\sqrt{2\alpha\delta}
\begin{bmatrix}
0&0&0\\
0&0&1\\
0&-1&0
\end{bmatrix},\qquad
e_4=\sqrt{2\alpha\delta}
\begin{bmatrix}
0&0&0\\
0&0&-i\\
0&-i&0
\end{bmatrix}.
\end{gather*}
\end{ex}

\begin{ex}
Next consider the dual negative $3$-$(\alpha,\delta)$-Sasaki space $\mathrm{SU}(2,1)/S^1$.
We realize the Lie algebra
\[
\mathfrak{su}(2,1)=\mathfrak{g}^*=\mathfrak{h}\oplus\mathfrak{sp}(1)\oplus i\H\subset \mathfrak{su}(3)^\mathbb{C}
\]
as described in \eqref{subspaces}. Then as for the
Aloff-Wallach space we identify the $1$-dimensional isotropy $\mathfrak{h}$ generated by
\[
h=\begin{bmatrix}
-i & 0&0\\
0&-i&0\\
0&0&2i
\end{bmatrix}.
\]
Analogously the Reeb vector fields are given by
\begin{gather*}
\xi_1=\delta
\begin{bmatrix}
i&0&0\\
0&-i&0\\
0&0&0
\end{bmatrix},\qquad
\xi_2=\delta
\begin{bmatrix}
0&-1&0\\
1&0&0\\
0&0&0
\end{bmatrix},\qquad
\xi_3=\delta
\begin{bmatrix}
0&-i&0\\
-i&0&0\\
0&0&0
\end{bmatrix}.
\end{gather*}
On the horizontal subspace we choose
\[
\tilde{e}^*_1=i\tilde{e}_1=\begin{bmatrix}
0&0&i\\ 0&0&0\\ -i&0&0
\end{bmatrix}\subset i\H.
\]
We have
\[
g(\tilde{e}^*_1,\tilde{e}^*_1)=\frac{-i^2}{24\alpha\delta}\kappa(\tilde{e}_1,\tilde{e}_1)=-\frac{1}{2\alpha\delta}.
\]
Thus we find an adapted base of $\mathrm{SU}(2,1)/S^1$ by $e_1^*=i\sqrt{-2\alpha\delta}\tilde{e}_1$ and
\begin{gather*}
e^*_2=\sqrt{-2\alpha\delta}
\begin{bmatrix}
0&0&-1\\
0&0&0\\
-1&0&0
\end{bmatrix},\;
e^*_3=\sqrt{-2\alpha\delta}
\begin{bmatrix}
0&0&0\\
0&0&i\\
0&-i&0
\end{bmatrix},\;
e^*_4=\sqrt{-2\alpha\delta}
\begin{bmatrix}
0&0&0\\
0&0&1\\
0&1&0
\end{bmatrix}.
\end{gather*}
\end{ex}
We now discuss the lowest dimensional example $\hat{\mathcal{T}}(1)$ of a negative homogeneous $3$-$(\alpha,\delta)$-Sasaki manifold fibering over Alekseevsky space $\mathcal{T}(1)$ not obtained by the construction over symmetric spaces.
\raa{1.3}
\begin{table}[h!]
\[
\begin{array}{cclll}
\toprule
\text{Dimension} &  \text{parameters} & \mathfrak{g} &  \mathfrak{h} & \text{alternative description}\\
\midrule
7 & q=0,\;l=0 &  \so(3)\oplus\R^{3}\oplus\R D & 0 & \mathrm{Sp}(1,1)/\mathrm{Sp}(1)\\
\midrule
\multirow{2}{*}{$11$} & q=0,\;l=1 & \so(3)\oplus\R^{3}\oplus\R D\oplus W_0&0&\mathrm{Sp}(2,1)/\mathrm{Sp}(2)\\
&q=1,\;l=0 & \so(3,1)\oplus\R^{3,1}\oplus\R D&0&\mathrm{SU}(2,2)/\mathrm{S}(\mathrm{U}(2)\times\mathrm{U}(1))\\
\midrule
\multirow{3}{*}{$15$} & q=0,\;l=2 & \so(3)\oplus\R^{3}\oplus\R D\oplus 2W_0& 0&\mathrm{Sp}(3,1)/\mathrm{Sp}(3)\\
&q=1,\;l=1 & \so(3,1)\oplus\R^{3,1}\oplus\R D\oplus W_1&0&\mathrm{SU}(3,2)/\mathrm{S}(\mathrm{U}(3)\times\mathrm{U}(1))\\
&q=2,\;l=0 & \so(3,2)\oplus\R^{3,2}\oplus\R D&\so(2)&\mathrm{SO}_0(3,4)/\mathrm{SO}(3)\times\mathrm{SO}(3)\\
\midrule
\multirow{4}{*}{$19$} & q=0,\;l=3 & \so(3)\oplus\R^{3}\oplus\R D\oplus 3W_0& 0 &\mathrm{Sp}(4,1)/\mathrm{Sp}(4)\\
& q=1,\;l=2 & \so(3,1)\oplus\R^{3,1}\oplus\R D\oplus 2 W_1& 0&\mathrm{SU}(4,2)/\mathrm{S}(\mathrm{U}(4)\times\mathrm{U}(1))\\
&q=2,\;l=1 & \so(3,2)\oplus\R^{3,2}\oplus\R D\oplus W_2&\so(2)&\hat{\mathcal{T}}(1)\ \text{non symmetric base}\\
&q=3,\;l=0 & \so(3,3)\oplus\R^{3,3}\oplus\R D & \so(3) & \mathrm{SO}_0(4,4)/\mathrm{SO}(4)\times\mathrm{SO}(3)\\
\bottomrule
\end{array}
\]
\caption{$3$-$(\alpha,\delta)$-Sasaki manifolds over Alekseevsky spaces of
$\dim\leq 19$, see \autoref{Alekseevskydimleq19}.}\label{table.Alekseevsky}
\end{table}
\begin{rem}\label{Alekseevskydimleq19}
The first new example arising from the construction over Alekseevsky spaces appears only in
dimension $19$. Table \ref{table.Alekseevsky} lists all homogeneous $3$-$(\alpha,\delta)$-Sasaki manifolds
obtained by \autoref{ThmAlekseevsky} up to dimension $19$ and, if existing, the isomorphic ones
appearing in \autoref{table.3-sasaki-data}, i.e.\ obtained by \autoref{construction} over non-compact symmetric spaces.
The list gets more intricate with higher dimension, in particular, there appear two inequivalent even Clifford modules for $q=3$ beginning in $\dim 27$ and for $q\geq 4$ we have $\dim W_q>4$.
Further, observe that the symmetric base cases $\mathrm{SU}(2,1)/\mathrm{U}(1)$, $\mathrm{G}_2^{(2)}/\mathrm{SO}(3)$ are not obtained by this construction.
\end{rem}
We now give more concrete descriptions of the $\Cl^0(3,q)$-modules $W_q$ for $q=0,1,2$. Note that there are
choices to be made though these lead to isomorphisms of the modules since all these modules are unique.
Let $\R^{3,q}=\langle e_{\hat{1}},e_{\hat{2}},e_{\hat{3}},e_1,\dots, e_q\rangle$, where $e_{\hat{i}}$ have signature
$+1$ while $e_i$ have signature $-1$. Then we have $\Cl^0(3,0)=\mathbb{H}$,
$\Cl^0(3,1)=\mathcal{M}_2(\mathbb{C})$, $\Cl^0(3,2)=\mathcal{M}_4(\mathbb{R})$ realized as follows.
\autoref{table.q=0and1} lists the cases $q=0$ and $q=1$, while \autoref{table.q=2} is devoted to the case
$q=2$.

\bigskip

\raa{1.3}
\begin{table}[h]
\[\begin{array}{@{}lllll@{}}
\toprule
q=0 & \deg 0: &  \begin{bsmallmatrix}1\end{bsmallmatrix} &  \phantom x &+1\\
\cmidrule(l){2-5}
    & \deg 2: & e_{\hat{1}\hat{2}}=\begin{bsmallmatrix}i\end{bsmallmatrix}, \quad
e_{\hat{2}\hat{3}}=\begin{bsmallmatrix}j\end{bsmallmatrix}, \quad
e_{\hat{3}\hat{1}}=\begin{bsmallmatrix}k\end{bsmallmatrix} & & -1 \\
\midrule[\heavyrulewidth]
q=1 & \deg 0: & \begin{bsmallmatrix}1&\\&1\end{bsmallmatrix} &  & +1\\
\cmidrule(l){2-5}
    & \deg 2: & e_{\hat{1}\hat{2}}=\begin{bsmallmatrix}i&\\&-i\end{bsmallmatrix}, \quad
e_{\hat{2}\hat{3}}=\begin{bsmallmatrix}&i\\i&\end{bsmallmatrix}, \quad
e_{\hat{3}\hat{1}}=\begin{bsmallmatrix}&-1\\1&\end{bsmallmatrix}& & -1 \\
    &        &  e_{\hat{3}1}=\begin{bsmallmatrix}1&\\&-1\end{bsmallmatrix},\quad
e_{\hat{1}1}=\begin{bsmallmatrix}&1\\1&\end{bsmallmatrix}, \quad
e_{\hat{2}1}=\begin{bsmallmatrix}&i\\-i&\end{bsmallmatrix}
& & +1\\
\cmidrule(l){2-5}
    & \deg 4: & e_{\hat{1}\hat{2}\hat{3}1}=\begin{bsmallmatrix}i&\\&i\end{bsmallmatrix} & &  -1\\
\bottomrule
\end{array}
\]
\caption{Choice of $\Cl^0(3,q)$-representations  for $q=0$ and $q=1$ }\label{table.q=0and1}
\end{table}

\bigskip

The notation is as follow: We denote elements $e_{ij}=e_ie_j\in\Cl^0(V)$ and analogous for the action of elements in $\Cl^0(V)$ of higher degree. The last line denotes the square of elements in the respective row, which are invariant of choices unlike the matrices itself.

\bigskip
\begin{table}[h]
\[\begin{array}{@{}lll@{}}
\toprule
\deg 0: & \begin{bsmallmatrix}1&&&\\&1&&\\&&1&\\&&&1\end{bsmallmatrix} & +1\\ \addlinespace[1mm]
\midrule
\deg 2: & e_{\hat{1}\hat{2}}=\begin{bsmallmatrix}&-1&&\\1&&&\\&&&1\\&&-1&\end{bsmallmatrix},\
e_{\hat{2}\hat{3}}=\begin{bsmallmatrix}&&&-1\\&&1&\\&-1&&\\1&&&\end{bsmallmatrix}, \
e_{\hat{3}\hat{1}}=\begin{bsmallmatrix}&&-1&\\&&&-1\\1&&&\\&1&&\end{bsmallmatrix}, \
e_{12}=\begin{bsmallmatrix}&&1&\\&&&-1\\-1&&&\\&1&&\end{bsmallmatrix} & -1\\ \addlinespace[1mm]
\cmidrule(l){2-3}
     & e_{\hat{3}1}=\begin{bsmallmatrix}1&&&\\&1&&\\&&-1&\\&&&-1\end{bsmallmatrix}, \
e_{\hat{1}1}=\begin{bsmallmatrix}&&1&\\&&&1\\1&&&\\&1&&\end{bsmallmatrix}, \
e_{\hat{2}1}=\begin{bsmallmatrix}&&&-1\\&&1&\\&1&&\\-1&&&\end{bsmallmatrix}, \
e_{\hat{3}2}=\begin{bsmallmatrix}&&1&\\&&&-1\\1&&&\\&-1&&\end{bsmallmatrix}
 & +1\\ \addlinespace[1mm]
     &
e_{\hat{1}2}=\begin{bsmallmatrix}-1&&&\\&1&&\\&&1&\\&&&-1\end{bsmallmatrix}, \
e_{\hat{2}2}=\begin{bsmallmatrix}&-1&&\\-1&&&\\&&&-1\\&&-1&\end{bsmallmatrix}
 & \\ \addlinespace[1mm]
\midrule
\deg 4: & e_{\hat{1}\hat{2}\hat{3}1}=\begin{bsmallmatrix}&-1&&\\1&&&\\&&&-1\\&&1&\end{bsmallmatrix},\
e_{\hat{1}\hat{2}\hat{3}2}=\begin{bsmallmatrix}&&&1\\&&1&\\&-1&&\\-1&&&\end{bsmallmatrix}
 & -1\\ \addlinespace[1mm]
\cmidrule(l){2-3}
     & e_{\hat{1}\hat{2}12}=\begin{bsmallmatrix}&&&1\\&&1&\\&1&&\\1&&&\end{bsmallmatrix}, \
e_{\hat{2}\hat{3}12}=\begin{bsmallmatrix}&-1&&\\-1&&&\\&&&1\\&&1&\end{bsmallmatrix}, \
e_{\hat{3}\hat{1}12}=\begin{bsmallmatrix}1&&&\\&-1&&\\&&1&\\&&&-1\end{bsmallmatrix}
 & +1\\ \addlinespace[1mm]
\bottomrule
\end{array}
\]
\caption{Choice of $\Cl^0(3,q)$-representations for $q=2$}\label{table.q=2}
\end{table}
With this we can find the map $\Pi\colon \Lambda^2W_2\to \mathbb{R}^{3,2}$.
\begin{theorem}
Setting $W_2\cong \mathbb{R}^4=\langle E_1,E_2,E_3,E_4\rangle$ with the $\mathfrak{spin}(3,2)$-module
structure above the map $\Pi\colon \Lambda^2W_2\to \mathbb{R}^{3,2}$ given by
\begin{gather*}
\Pi(E_1\wedge E_2)=-\hat{e}_3-e_1,\quad \Pi(E_1\wedge E_3)=-\hat{e}_2,\quad \Pi(E_1\wedge E_4)=-\hat{e}_1+e_2,\\
\Pi(E_2\wedge E_3)=\hat{e}_1+e_2,\quad \Pi(E_4\wedge E_2)=\hat{e}_2,\quad \Pi(E_3\wedge E_4)=\hat{e}_3-e_1
\end{gather*}
is $\mathfrak{spin}(3,2)$-invariant and non-degenerate.
\end{theorem}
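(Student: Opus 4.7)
The plan is to dispose of both assertions by direct computation using the explicit $\Cl^0(3,2)$-representation tabulated in Table~\ref{table.q=2}.

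Non-degeneracy I would establish first, since it is essentially immediate. The six image vectors span $V=\R^{3,2}$, as one reads off from
\begin{align*}
\hat{e}_2 &= -\Pi(E_1\wedge E_3), \\
\hat{e}_1 &= \tfrac{1}{2}\bigl(\Pi(E_2\wedge E_3)-\Pi(E_1\wedge E_4)\bigr), \\
e_2 &= \tfrac{1}{2}\bigl(\Pi(E_2\wedge E_3)+\Pi(E_1\wedge E_4)\bigr), \\
\hat{e}_3 &= \tfrac{1}{2}\bigl(\Pi(E_3\wedge E_4)-\Pi(E_1\wedge E_2)\bigr), \\
e_1 &= -\tfrac{1}{2}\bigl(\Pi(E_1\wedge E_2)+\Pi(E_3\wedge E_4)\bigr).
\end{align*}
Hence $\Pi$ is surjective -- which is non-degeneracy in the sense of \cite{CortesNewConstr} -- and its kernel is automatically one-dimensional, visibly spanned by $E_1\wedge E_3-E_2\wedge E_4$.

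For $\mathfrak{spin}(3,2)$-invariance the identity to check is
\[
\Pi(\xi\cdot s,t)+\Pi(s,\xi\cdot t)\;=\;[\xi,\Pi(s,t)]
\]
for every $\xi\in\mathfrak{spin}(3,2)\subset\Cl^0(3,2)$ and every $s,t\in W_2$. On the left, $\xi$ acts on $W_2$ by left Clifford multiplication, i.e.~by the matrices of Table~\ref{table.q=2}; on the right one uses the standard $\mathfrak{so}(3,2)$-action on $V$ under the identification $e\wedge e'\mapsto -\tfrac{1}{2}ee'$ fixed in \autoref{ThmAlekseevsky}. By bilinearity it is enough to test the identity on a basis: the ten degree-two generators $e_{\hat i\hat j},\ e_{\hat i k},\ e_{kl}$ of $\mathfrak{spin}(3,2)$ listed in Table~\ref{table.q=2}, together with the six simple bivectors $E_a\wedge E_b$.

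The main obstacle is the sheer bookkeeping of the resulting finite check. Two economies keep it manageable. First, many of the generators $\xi$ act on $\{E_1,\dots,E_4\}$ either as a signed permutation or by annihilating some $E_a$, so whole blocks of the enumeration collapse to the trivial identity $0=0$. Second, the formulae for $\Pi$ exhibit a clear pairing structure: the three bivector pairs $(E_1\wedge E_2,E_3\wedge E_4)$, $(E_1\wedge E_3,E_2\wedge E_4)$ and $(E_1\wedge E_4,E_2\wedge E_3)$ map into the $\langle\hat e_3,e_1\rangle$--plane, the $\langle\hat e_2\rangle$--line and the $\langle\hat e_1,e_2\rangle$--plane respectively, and this grouping allows several checks to be dispatched simultaneously. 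What remains is a short list of genuinely distinct scalar identities, each obtained by reading a matrix entry from Table~\ref{table.q=2} and applying the elementary commutator rule $[e_a\wedge e_b,e_c]=\langle e_b,e_c\rangle e_a-\langle e_a,e_c\rangle e_b$ in $\mathfrak{so}(3,2)$.
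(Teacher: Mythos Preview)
Your approach is correct and essentially the same as the paper's: both dispose of non-degeneracy by observing that the six images span $V$, and both verify equivariance by direct case-checking against the matrices of Table~\ref{table.q=2}, exploiting that many cases collapse to $0=0$. The one economy the paper adds that you do not is that, since equivariance is a Lie-algebra condition, it suffices to test it on a \emph{generating set} of $\mathfrak{spin}(3,2)$ rather than on all ten degree-two elements; the paper works only with $e_{\hat 1\hat 2},\ e_{\hat 2\hat 3},\ e_{\hat 1 1},\ e_{12}$, which cuts the explicit verification by more than half.
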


Recall that the action of $\so(3,q)$ on the $\Cl^0(3,q)$-module $W$, and thereby $W\wedge W$, is given by the isomorphism
$\ad^{-1}\colon\so(3,q)\to\mathfrak{spin}(3,q)=\Cl^0(3,q)$, $e_i\wedge e_j\mapsto -\frac 12 e_{ij}$,
where $i,j\in\{\hat{1},\hat{2},\hat{3},1,\dots,q\}$.

\begin{proof}
Non-degeneracy is clear. It suffices to check the invariance on a generating set of $\Cl^0(3,2)$. One
such set is given by $e_{\hat{1}\hat{2}},e_{\hat{2}\hat{3}},e_{\hat{1}1},e_{12}$. Each of these map certain subspaces
of $W_2$ onto one another, hence their action on the exterior product of these subspaces vanishes. This yields the identities
\begin{align*}
-2\hat{e}_1\wedge\hat{e}_2(\hat{e}_3\pm e_1)&=0= \Pi(e_{\hat{1}\hat{2}}(E_1\wedge E_2))=\Pi(e_{\hat{1}\hat{2}}(E_3\wedge E_4)),\\
-2\hat{e}_2\wedge\hat{e}_3(\hat{e}_1\pm e_2)&=0= \Pi(e_{\hat{2}\hat{3}}(E_1\wedge E_4))=\Pi(e_{\hat{2}\hat{3}}(E_2\wedge E_3)),\\
-2\hat{e}_1\wedge e_1(\hat{e}_2)&=0= \Pi(e_{\hat{1}1}(E_1\wedge E_3))=\Pi(e_{\hat{1}1}(E_4\wedge E_2)),\\
-2e_1\wedge e_2(\hat{e}_2)&=0= \Pi(e_{12}(E_1\wedge E_3))=\Pi(e_{12}(E_4\wedge E_2)).
\end{align*}
The rest is just more computations. We start with $e_{\hat{1}\hat{2}}$:
\begin{align*}
\Pi(e_{\hat{1}\hat{2}}(E_1\wedge E_3))&=\Pi(E_2\wedge E_3)+\Pi(E_1\wedge -E_4)=\hat{e}_1+e_2+\hat{e}_1-e_2=2\hat{e}_1\\
&=-2\hat{e}_1\wedge\hat{e}_2(-\hat{e}_2)=-2\hat{e}_1\wedge\hat{e}_2(\Pi(E_1\wedge E_3)),\\
\Pi(e_{\hat{1}\hat{2}}(E_1\wedge E_4))&=\Pi(E_2\wedge E_4)+\Pi(E_1\wedge E_3)=-\hat{e}_2-\hat{e}_2=-2\hat{e}_2\\
&=-2\hat{e}_1\wedge\hat{e}_2(-\hat{e}_1+e_2)=-2\hat{e}_1\wedge\hat{e}_2(\Pi(E_1\wedge E_4)),\\
\Pi(e_{\hat{1}\hat{2}}(E_2\wedge E_3))&=\Pi(-E_1\wedge E_3)+\Pi(E_2\wedge -E_4)=\hat{e}_2+\hat{e}_2=2\hat{e}_2\\
&=-2\hat{e}_1\wedge\hat{e}_2(\hat{e}_1+e_2)=-2\hat{e}_1\wedge\hat{e}_2(\Pi(E_2\wedge E_3)),\\
\Pi(e_{\hat{1}\hat{2}}(E_4\wedge E_2))&=\Pi(E_3\wedge E_2)+\Pi(E_4\wedge -E_1)=-\hat{e}_1-e_2-\hat{e}_1+e_2=-2\hat{e}_1\\
&=-2\hat{e}_1\wedge\hat{e}_2(\hat{e}_2)=-2\hat{e}_1\wedge\hat{e}_2(\Pi(E_4\wedge E_2)).
\end{align*}
For $e_{\hat{2}\hat{3}}$:
\begin{align*}
\Pi(e_{\hat{2}\hat{3}}(E_1\wedge E_2))&=\Pi(E_4\wedge E_2)+\Pi(E_1\wedge -E_3)=\hat{e}_2+\hat{e}_2=2\hat{e}_2\\
&=-2\hat{e}_2\wedge\hat{e}_3(-\hat{e}_3-e_1)=-2\hat{e}_2\wedge\hat{e}_3(\Pi(E_1\wedge E_2)),\\
\Pi(e_{\hat{2}\hat{3}}(E_1\wedge E_3))&=\Pi(E_4\wedge E_3)+\Pi(E_1\wedge E_2)=-\hat{e}_3+e_1-\hat{e}_3-e_1=-2\hat{e}_3\\
&=-2\hat{e}_2\wedge\hat{e}_3(-\hat{e}_2)=-2\hat{e}_2\wedge\hat{e}_3(\Pi(E_1\wedge E_3)),\\
\Pi(e_{\hat{2}\hat{3}}(E_4\wedge E_2))&=\Pi(-E_1\wedge E_2)+\Pi(E_4\wedge -E_3)=\hat{e}_3+e_1+\hat{e}_3-e_1=2\hat{e}_3\\
&=-2\hat{e}_2\wedge\hat{e}_3(\hat{e}_2)=-2\hat{e}_2\wedge\hat{e}_3(\Pi(E_4\wedge E_2)),\\
\Pi(e_{\hat{2}\hat{3}}(E_2\wedge E_4))&=\Pi(E_3\wedge E_2)+\Pi(E_3\wedge -E_1)=-\hat{e}_2-\hat{e}_2=-2\hat{e}_2\\
&=-2\hat{e}_2\wedge\hat{e}_3(\hat{e}_3-e_1)=-2\hat{e}_2\wedge\hat{e}_3(\Pi(E_3\wedge E_4)).
\end{align*}
For $e_{\hat{1}1}$:
\begin{align*}
\Pi(e_{\hat{1}1}(E_1\wedge E_2))&=\Pi(E_3\wedge E_2)+\Pi(E_1\wedge E_4)=-\hat{e}_1-e_2-\hat{e}_1+e_2=-2\hat{e}_1\\
&=-2\hat{e}_1\wedge e_1(-\hat{e}_3-e_1)=-2\hat{e}_1\wedge e_1(\Pi(E_1\wedge E_2)),\\
\Pi(e_{\hat{1}1}(E_1\wedge E_4))&=\Pi(E_3\wedge E_4)+\Pi(E_1\wedge E_2)=\hat{e}_3-e_1-\hat{e}_3-e_1=-2e_1\\
&=-2\hat{e}_	1\wedge e_1(-\hat{e}_1+e_2)=-2\hat{e}_1\wedge e_1(\Pi(E_1\wedge E_4)),\\
\Pi(e_{\hat{1}1}(E_2\wedge E_3))&=\Pi(E_4\wedge E_3)+\Pi(E_2\wedge E_1)=-\hat{e}_3+e_1+\hat{e}_3+e_1=2e_1\\
&=-2\hat{e}_1\wedge e_1(\hat{e}_1+e_2)=-2\hat{e}_1\wedge e_1(\Pi(E_2\wedge E_3)),\\
\Pi(e_{\hat{1}1}(E_3\wedge E_4))&=\Pi(E_1\wedge E_4)+\Pi(E_3\wedge E_2)=-\hat{e}_1+e_2-\hat{e}_1-e_2=-2\hat{e}_1\\
&=-2\hat{e}_1\wedge e_1(\hat{e}_3-e_1)=-2\hat{e}_1\wedge e_1(\Pi(E_3\wedge E_4)).
\end{align*}
And finally for $e_{12}$:
\begin{align*}
\Pi(e_{12}(E_1\wedge E_2))&=\Pi(-E_3\wedge E_2)+\Pi(E_1\wedge E_4)=\hat{e}_1+e_2-\hat{e}_1+e_2=2e_2\\
&=-2 e_1\wedge e_2(-\hat{e}_3-e_1)=-2 e_1\wedge e_2(\Pi(E_1\wedge E_2)),\\
\Pi(e_{12}(E_1\wedge E_4))&=\Pi(-E_3\wedge E_4)+\Pi(E_1\wedge -E_2)=-\hat{e}_3+e_1+\hat{e}_3+e_1=2e_1\\
&=-2 e_1\wedge e_2(-\hat{e}_1+e_2)=-2 e_1\wedge e_2(\Pi(E_1\wedge E_4)),\\
\Pi(e_{12}(E_2\wedge E_3))&=\Pi(E_4\wedge E_3)+\Pi(E_2\wedge E_1)=-\hat{e}_3+e_1+\hat{e}_3+e_1=2e_1\\
&=-2 e_1\wedge e_2(\hat{e}_1+e_2)=-2 e_1\wedge e_2(\Pi(E_2\wedge E_3)),\\
\Pi(e_{12}(E_3\wedge E_4))&=\Pi(E_1\wedge E_4)+\Pi(E_3\wedge -E_2)=-\hat{e}_1+e_2+\hat{e}_1+e_2=2e_2\\
&=-2 e_1\wedge e_2(\hat{e}_3-e_1)=-2 e_1\wedge e_2(\Pi(E_3\wedge E_4)).\qedhere
\end{align*}
\end{proof}

To display the algebra $\mathfrak{g}$ with $q=2,\ l=1$ corresponding to $\hat{\mathcal{T}}(1)=G/H$ we
use the inclusion $\mathfrak{g}\subset\mathfrak{der}(\mathfrak{r})\subset\mathfrak{gl}(\R^{3,2}\oplus\R D\oplus W_2)$.
We give these elements as matrices with respect to the basis $\hat{e}_1,\hat{e}_2,
\hat{e}_3,e_1,e_2,D,E_1,E_2,E_3,E_4$ of $\mathfrak{r}$.\\

Recall $\mathfrak{g}=\so(3,2)\oplus\R^{3,2}\oplus\R D\oplus W_2$. We begin with $\so(3,2)$:

\scalebox{.85}{\parbox{\linewidth}{
\begin{gather*}
2\hat{e}_1\wedge\hat{e}_2=\begin{bsmallmatrix}
0&2&&&&0&&&&\\
-2&0&&&&0&&&&\\
&&0&&&0&&&&\\
&&&0&&0&&&&\\
&&&&0&0&&&&\\
0&0&0&0&0&0&0&0&0&0\\
&&&&&0&&&0&1\\
&&&&&0&&&-1&0\\
&&&&&0&0&1&&\\
&&&&&0&-1&0&&\\\end{bsmallmatrix},\quad
2\hat{e}_3\wedge\hat{e}_1=\begin{bsmallmatrix}
0&&-2&&&0&&&&\\
&0&&&&0&&&&\\
2&&0&&&0&&&&\\
&&&0&&0&&&&\\
&&&&0&0&&&&\\
0&0&0&0&0&0&0&0&0&0\\
&&&&&0&&&1&0\\
&&&&&0&&&0&1\\
&&&&&0&-1&0&&\\
&&&&&0&0&-1&&\\\end{bsmallmatrix},\\
2\hat{e}_2\wedge\hat{e}_3=\begin{bsmallmatrix}
0&&&&&0&&&&\\
&0&2&&&0&&&&\\
&-2&0&&&0&&&&\\
&&&0&&0&&&&\\
&&&&0&0&&&&\\
0&0&0&0&0&0&0&0&0&0\\
&&&&&0&&&0&1\\
&&&&&0&&&-1&0\\
&&&&&0&0&1&&\\
&&&&&0&-1&0&&\\\end{bsmallmatrix},
\end{gather*}
}}

\scalebox{.85}{\parbox{\linewidth}{
\begin{gather*}
2\hat{e}_1\wedge e_1=\begin{bsmallmatrix}
0&&&-2&&0&&&&\\
&0&&&&0&&&&\\
&&0&&&0&&&&\\
-2&&&0&&0&&&&\\
&&&&0&0&&&&\\
0&0&0&0&0&0&0&0&0&0\\
&&&&&0&&&-1&0\\
&&&&&0&&&0&-1\\
&&&&&0&-1&0&&\\
&&&&&0&0&-1&&\\\end{bsmallmatrix},\qquad
2\hat{e}_1\wedge e_2=\begin{bsmallmatrix}
0&&&&-2&0&&&&\\
&0&&&&0&&&&\\
&&0&&&0&&&&\\
&&&0&&0&&&&\\
-2&&&&0&0&&&&\\
0&0&0&0&0&0&0&0&0&0\\
&&&&&0&1&0&&\\
&&&&&0&0&-1&&\\
&&&&&0&&&-1&0\\
&&&&&0&&&0&1\\\end{bsmallmatrix},\\
2\hat{e}_2\wedge e_1=\begin{bsmallmatrix}
0&&&&&0&&&&\\
&0&&-2&&0&&&&\\
&&0&&&0&&&&\\
&-2&&0&&0&&&&\\
&&&&0&0&&&&\\
0&0&0&0&0&0&0&0&0&0\\
&&&&&0&&&0&1\\
&&&&&0&&&-1&0\\
&&&&&0&0&-1&&\\
&&&&&0&1&0&&\\\end{bsmallmatrix},\qquad
2\hat{e}_2\wedge e_2=\begin{bsmallmatrix}
0&&&&&0&&&&\\
&0&&&-2&0&&&&\\
&&0&&&0&&&&\\
&&&0&&0&&&&\\
&-2&&&0&0&&&&\\
0&0&0&0&0&0&0&0&0&0\\
&&&&&0&0&1&&\\
&&&&&0&1&0&&\\
&&&&&0&&&0&1\\
&&&&&0&&&1&0\\\end{bsmallmatrix},\\
2\hat{e}_3\wedge e_1=\begin{bsmallmatrix}
0&&&&&0&&&&\\
&0&&&&0&&&&\\
&&0&-2&&0&&&&\\
&&-2&0&&0&&&&\\
&&&&0&0&&&&\\
0&0&0&0&0&0&0&0&0&0\\
&&&&&0&-1&0&&\\
&&&&&0&0&-1&&\\
&&&&&0&&&1&0\\
&&&&&0&&&0&1\\\end{bsmallmatrix},\qquad
2\hat{e}_3\wedge e_2=\begin{bsmallmatrix}
0&&&&&0&&&&\\
&0&&&&0&&&&\\
&&0&&-2&0&&&&\\
&&&0&&0&&&&\\
&&-2&&0&0&&&&\\
0&0&0&0&0&0&0&0&0&0\\
&&&&&0&&&-1&0\\
&&&&&0&&&0&1\\
&&&&&0&-1&0&&\\
&&&&&0&0&1&&\\\end{bsmallmatrix},
\end{gather*}
}}

\scalebox{.85}{\parbox{\linewidth}{
\begin{gather*}
2e_1\wedge e_2=\begin{bsmallmatrix}
0&&&&&0&&&&\\
&0&&&&0&&&&\\
&&0&&&0&&&&\\
&&&0&-2&0&&&&\\
&&&2&0&0&&&&\\
0&0&0&0&0&0&0&0&0&0\\
&&&&&0&&&-1&0\\
&&&&&0&&&0&1\\
&&&&&0&1&0&&\\
&&&&&0&0&-1&&\\\end{bsmallmatrix}.
\end{gather*}
}}\\
Recall that $2e_1\wedge e_2$ generates the isotropy algebra $\mathfrak{h}$.

Next we describe the element $2D$:\\
\scalebox{.85}{\parbox{\linewidth}{
\[
2D=\begin{bsmallmatrix}
2&&&&&0&&&&\\
&2&&&&0&&&&\\
&&2&&&0&&&&\\
&&&2&&0&&&&\\
&&&&2&0&&&&\\
0&0&0&0&0&0&0&0&0&0\\
&&&&&0&1&&&\\
&&&&&0&&1&&\\
&&&&&0&&&1&\\
&&&&&0&&&&1\\\end{bsmallmatrix}.
\]
}}

The generators of $V=\R^{3,2}$ are:

\scalebox{.95}{\parbox{\linewidth}{
\begin{gather*}
\hat{e}_1=\begin{bsmallmatrix}
&&&&&-1&&&&\\
&&&&&0&&&&\\
&&&&&0&&&&\\
&&&&&0&&&&\\
&&&&&0&&&&\\
0&0&0&0&0&0&0&0&0&0\\
&&&&&0&&&&\\
&&&&&0&&&&\\
&&&&&0&&&&\\
&&&&&0&&&&\\\end{bsmallmatrix},\qquad
\hat{e}_2=\begin{bsmallmatrix}
&&&&&0&&&&\\
&&&&&-1&&&&\\
&&&&&0&&&&\\
&&&&&0&&&&\\
&&&&&0&&&&\\
0&0&0&0&0&0&0&0&0&0\\
&&&&&0&&&&\\
&&&&&0&&&&\\
&&&&&0&&&&\\
&&&&&0&&&&\\\end{bsmallmatrix},\qquad
\hat{e}_3=\begin{bsmallmatrix}
&&&&&0&&&&\\
&&&&&0&&&&\\
&&&&&-1&&&&\\
&&&&&0&&&&\\
&&&&&0&&&&\\
0&0&0&0&0&0&0&0&0&0\\
&&&&&0&&&&\\
&&&&&0&&&&\\
&&&&&0&&&&\\
&&&&&0&&&&\\\end{bsmallmatrix},\\
e_1=\begin{bsmallmatrix}
&&&&&0&&&&\\
&&&&&0&&&&\\
&&&&&0&&&&\\
&&&&&-1&&&&\\
&&&&&0&&&&\\
0&0&0&0&0&0&0&0&0&0\\
&&&&&0&&&&\\
&&&&&0&&&&\\
&&&&&0&&&&\\
&&&&&0&&&&\\\end{bsmallmatrix},\qquad
e_2=\begin{bsmallmatrix}
&&&&&0&&&&\\
&&&&&0&&&&\\
&&&&&0&&&&\\
&&&&&0&&&&\\
&&&&&-1&&&&\\
0&0&0&0&0&0&0&0&0&0\\
&&&&&0&&&&\\
&&&&&0&&&&\\
&&&&&0&&&&\\
&&&&&0&&&&\\\end{bsmallmatrix}.
\end{gather*}
}}

\pagebreak
Finally we have the $4$ basis elements of $W_2$:\\
\scalebox{.95}{\parbox{\linewidth}{
\begin{align*}
2E_1=\begin{bsmallmatrix}
&&&&&0&&&&-2\\
&&&&&0&&&-2&\\
&&&&&0&&-2&&\\
&&&&&0&&2&&\\
&&&&&0&&&&2\\
0&0&0&0&0&0&0&0&0&0\\
&&&&&-1&&&&\\
&&&&&0&&&&\\
&&&&&0&&&&\\
&&&&&0&&&&\\\end{bsmallmatrix},\quad &
2E_2=\begin{bsmallmatrix}
&&&&&0&&&2&\\
&&&&&0&&&&-2\\
&&&&&0&2&&&\\
&&&&&0&2&&&\\
&&&&&0&&&2&\\
0&0&0&0&0&0&0&0&0&0\\
&&&&&0&&&&\\
&&&&&-1&&&&\\
&&&&&0&&&&\\
&&&&&&&&&\\\end{bsmallmatrix}\\
2E_3=\begin{bsmallmatrix}
&&&&&0&&-2&&\\
&&&&&0&2&&&\\
&&&&&0&&&&2\\
&&&&&0&&&&-2\\
&&&&&0&&-2&&\\
0&0&0&0&0&0&0&0&0&0\\
&&&&&0&&&&\\
&&&&&0&&&&\\
&&&&&-1&&&&\\
&&&&&0&&&&\\\end{bsmallmatrix},\quad &
2E_4=\begin{bsmallmatrix}
&&&&&0&2&&&\\
&&&&&0&&2&&\\
&&&&&0&&&-2&\\
&&&&&0&&&2&\\
&&&&&0&-2&&&\\
0&0&0&0&0&0&0&0&0&0\\
&&&&&0&&&&\\
&&&&&0&&&&\\
&&&&&0&&&&\\
&&&&&-1&&&&\\\end{bsmallmatrix}
\end{align*}
}} 
\section{Nomizu maps of homogeneous $3$-$(\alpha,\delta)$-Sasaki manifolds}
\subsection{Nomizu map of the canonical connection}
By the Nomizu theorem invariant connections on reductive homogeneous spaces $M=G/H$ are in bijective correspondence with isotropy equivariant maps $\Lambda\colon \mathfrak{m}\times\mathfrak{m}\to\mathfrak{m}$, where $\mathfrak{m}$ is a reductive complement to the isotropy algebra $\mathfrak{h}\subset\mathfrak{g}$. For fundamental vector fields $X,Y$ considered as elements in $\mathfrak{m}$ this correspondence is given by $\Lambda^\nabla_XY=\nabla_{X_0}Y-[X,Y]_0$, compare \cite[Corollary 2.2, p.191]{KobNom}.
By \cite[Proposition X.2.3, p. 191]{KobNom} the torsion $T^\nabla$ of the connection corresponding to $\Lambda^\nabla$ is given by
\begin{equation}\label{lambdatorsion}
T^\nabla(X,Y)= \Lambda^\nabla_X Y-\Lambda^\nabla_Y X-[X,Y]_{\mathfrak{m}}.
\end{equation}

For the following theorem we need a similar statement to \autoref{VComm} for two fundamental vector fields. Note that even in the case when $X,Y\in\mathfrak{g}$ are horizontal in the origin they fail to be horizontal in other points. Yet we have
\begin{lem}\label{VCommFundamental}
Let $G\curvearrowright M$ act by automorphisms on a $3$-$(\alpha,\delta)$-Sasaki manifold $M$ from the left, $X,Y\in\mathfrak{g}$ with fundamental vector fields $\hat{X},\hat{Y}\in\mathfrak{X}(M)$. Then
\[
\widehat{[X,Y]}_{\mathcal{V}}=-\sum_{i=1}^3\big(2\alpha\Phi_i(\hat{X},\hat{Y})+2(\alpha-\delta)\eta_j\wedge\eta_k(\hat{X},\hat{Y})\big)\xi_i.
\]
\end{lem}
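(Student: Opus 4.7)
The plan is to exploit the $G$-invariance of the $3$-$(\alpha,\delta)$-Sasaki structure to convert $\eta_i(\widehat{[X,Y]})$ into $d\eta_i(\hat X,\hat Y)$, and then to invoke the defining equation \eqref{differential_eta}. Since $\widehat{[X,Y]}_{\mathcal{V}}=\sum_{i=1}^3\eta_i(\widehat{[X,Y]})\,\xi_i$, the statement reduces to computing the three scalars $\eta_i(\widehat{[X,Y]})$.

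First I would use the assumption that $G$ acts by automorphisms: in particular each $1$-form $\eta_i$ is $G$-invariant, so $\mathcal{L}_{\hat X}\eta_i=0$ for every fundamental vector field $\hat X$. By Cartan's magic formula this gives $\iota_{\hat X}d\eta_i=-d(\eta_i(\hat X))$, whence
\[
\hat Y(\eta_i(\hat X))=-d\eta_i(\hat X,\hat Y),\qquad \hat X(\eta_i(\hat Y))=d\eta_i(\hat X,\hat Y).
\]
Substituting these into the standard identity
\[
d\eta_i(\hat X,\hat Y)=\hat X(\eta_i(\hat Y))-\hat Y(\eta_i(\hat X))-\eta_i([\hat X,\hat Y])
\]
yields $\eta_i([\hat X,\hat Y])=d\eta_i(\hat X,\hat Y)$, the standard fact that on invariant $1$-forms the Lie bracket is detected by the exterior derivative.

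Next, for a left action the fundamental vector field map is an anti-homomorphism of Lie algebras, so $[\hat X,\hat Y]=-\widehat{[X,Y]}$, which turns the previous identity into $\eta_i(\widehat{[X,Y]})=-d\eta_i(\hat X,\hat Y)$. Inserting the $3$-$(\alpha,\delta)$-Sasaki relation \eqref{differential_eta} and summing over $i$ then delivers
\[
\widehat{[X,Y]}_{\mathcal{V}}=\sum_{i=1}^3\eta_i(\widehat{[X,Y]})\,\xi_i=-\sum_{i=1}^3\bigl(2\alpha\Phi_i(\hat X,\hat Y)+2(\alpha-\delta)\eta_j\wedge\eta_k(\hat X,\hat Y)\bigr)\xi_i,
\]
which is the claimed formula.

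The only real pitfall is bookkeeping with the sign convention for fundamental vector fields under a left action; everything else follows mechanically once $G$-invariance of the structure $1$-forms $\eta_i$ is in place. As a sanity check, when both $\hat X,\hat Y$ are horizontal the second term in \eqref{differential_eta} vanishes on $(\hat X,\hat Y)$ and one recovers exactly \autoref{VComm}, so the generalization is consistent with the horizontal case.
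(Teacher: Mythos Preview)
Your proof is correct and follows essentially the same route as the paper: both use the $G$-invariance of $\eta_i$ (i.e.\ $\mathcal{L}_{\hat X}\eta_i=0$) together with the standard formula for $d\eta_i$ to obtain $\eta_i([\hat X,\hat Y])=d\eta_i(\hat X,\hat Y)$, then apply $[\hat X,\hat Y]=-\widehat{[X,Y]}$ and the defining relation \eqref{differential_eta}. The only cosmetic difference is that you invoke Cartan's magic formula where the paper instead writes out $(\mathcal{L}_{\hat X}\eta_i)(\hat Y)=\hat X(\eta_i(\hat Y))-\eta_i([\hat X,\hat Y])=0$ directly.
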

\begin{proof}
Since $G$ is a group of automorphisms we have $L_{\hat{X}}\eta_i=0$ so
\begin{align*}
d\eta_i(\hat{X},\hat{Y}) &=\hat{X}(\eta_i(\hat{Y}))-\hat{Y}(\eta_i(\hat{X}))-\eta_i([\hat{X},\hat{Y}])\\
&=\eta_i([\hat{X},\hat{Y}])-\eta_i([\hat{Y},\hat{X}])-\eta_i([\hat{X},\hat{Y}])=\eta_i([\hat{X},\hat{Y}])=-\eta_i(\widehat{[X,Y]}).
\end{align*}
Using $\widehat{[X,Y]}_\mathcal{V}=\sum_{i=1}^3\eta_i(\widehat{[X,Y]})\xi_i$ and \eqref{differential_eta} yields the result.
\end{proof}

\begin{theorem}
The Nomizu map for the canonical connection $\Lambda^\nabla\colon \mathfrak{m}\times\mathfrak{m}\to\mathfrak{m}$ of a homogeneous $3$-$(\alpha,\delta)$-Sasaki manifold with $\mathfrak{m}=\mathcal{V}\oplus\mathcal{H}$ is given by
\[
\Lambda^\nabla_X Y=\begin{cases}
\Lambda^{g_N}_X Y & X,Y\in\mathcal{H}\\
\frac{\beta}{2\delta}[X,Y] & X,Y\in\mathcal{V}\\
[X,Y]-2\alpha\sum_{i=1}^3\eta_i(X)\varphi_i Y & X\in\mathcal{V},Y\in\mathcal{H}\\
0 & X\in\mathcal{H},Y\in\mathcal{V},\end{cases}
\]
where $\Lambda^{g_N}\colon\mathcal{H}\times\mathcal{H}\to\mathcal{H}$ is the Nomizu map of the Levi-Civita connection on the homogeneous base of the canonical submersion.
\end{theorem}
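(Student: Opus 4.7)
The plan is to verify the four cases separately, starting from those directly derivable from the parallelism formula \eqref{canonical} and then bootstrapping via the torsion identity \eqref{lambdatorsion} and the submersion relation \eqref{nablanablag}. Throughout I identify $\Lambda^{\nabla}_X Y$ with $(\nabla_{X^*}Y^*)_o$ for fundamental vector fields $X^*,Y^*$ associated to $X,Y\in\mathfrak{m}$, the agreement being guaranteed by the Nomizu correspondence after passing to a convenient $G$-invariant extension of $Y$ at the origin.

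First I dispose of the two cases where $Y\in\V$. The Reeb vector fields $\xi_j$ are $G$-invariant, so \eqref{canonical} computes $\nabla\xi_j$ globally. When $X\in\H$, $\eta_i(X)=0$ forces $\nabla_X\xi_j=0$ and hence $\Lambda^{\nabla}_X\xi_j=0$, which is Case 4. When $X=\xi_i\in\V$, combining \eqref{canonical} with the bracket $[\xi_i,\xi_j]=2\delta\,\epsilon_{ijm}\xi_m$ yields $\Lambda^{\nabla}_{\xi_i}\xi_j=\beta\,\epsilon_{ijm}\xi_m=\tfrac{\beta}{2\delta}[\xi_i,\xi_j]$, which is Case 2.

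Case 3 ($X\in\V$, $Y\in\H$) then follows from Case 4 together with the torsion identity \eqref{lambdatorsion}: one has $T(X,Y)=\Lambda^{\nabla}_X Y-\Lambda^{\nabla}_Y X-[X,Y]_{\mathfrak{m}}$, and $\Lambda^{\nabla}_Y X=0$ by Case 4. Specializing \eqref{torsion02} to such a pair, the only surviving contribution is $-2\alpha\sum_i\eta_i(X)\varphi_i Y$. In both the Wolf-type and the Alekseevsky-type constructions $\V$ commutes with $\mathfrak{h}$ and preserves $\H$, so $[X,Y]$ lies entirely in $\H\subset\mathfrak{m}$, giving $[X,Y]_{\mathfrak{m}}=[X,Y]$ and the claimed formula.

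Finally, Case 1 ($X,Y\in\H$) is obtained from the canonical submersion of \autoref{prop.canonicalsubm}. In the homogeneous setting $\mathfrak{g}_0\coloneqq\mathfrak{h}\oplus\V$ is a subalgebra whose connected subgroup $G_0$ realises $N$ locally as $G/G_0$ with reductive complement $\mathfrak{m}_N=\H$. Formula \eqref{nablanablag}, applied to horizontal lifts, translates $\Lambda^{\nabla}_X Y$ for horizontal arguments into the Nomizu map $\Lambda^{g_N}_X Y$ on $N$ under the identification $\H\cong T_{\pi(o)}N$; the consistency check is that \eqref{torsion02} restricted to $\H^3$ vanishes, so the two Koszul expressions coincide. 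The main obstacle I anticipate is the careful handling of the discrepancy between fundamental vector fields and their $G$-invariant (Reeb) or basic horizontal counterparts, which differ away from the origin. I would circumvent this by working with the equivalent Koszul-type identity
\[
2g(\Lambda^{\nabla}_X Y,Z)=g([X,Y]_{\mathfrak{m}},Z)-g([Y,Z]_{\mathfrak{m}},X)-g([X,Z]_{\mathfrak{m}},Y)+T(X,Y,Z),
\]
whose right-hand side depends only on the $\mathfrak{g}$-brackets, the invariant metric, and the known torsion, and so sidesteps any ambiguity coming from the choice of extension.
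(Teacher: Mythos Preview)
Your approach is correct but genuinely different from the paper's. The paper proceeds by \emph{verification}: it takes the candidate formula as given and checks the two properties that pin down the Nomizu map of a metric connection with prescribed skew torsion, namely (i) that the torsion computed via \eqref{lambdatorsion} reproduces the canonical torsion \eqref{torsion01}, case by case on $\mathcal H\times\mathcal H$, $\mathcal V\times\mathcal H$, $\mathcal V\times\mathcal V$ (invoking \autoref{VCommFundamental} for the first), and (ii) that each $\Lambda^\nabla_X$ lies in $\mathfrak{so}(\mathfrak m)$. You instead \emph{compute} $\Lambda^\nabla$ directly: Cases~2 and~4 from the explicit covariant derivative \eqref{canonical} of the $G$-invariant Reeb fields via the identity $\Lambda^\nabla_X Y=(\nabla_{\hat X}\tilde Y)_o$ for invariant $\tilde Y$; Case~3 from Case~4 and the torsion; Case~1 from the submersion relation \eqref{nablanablag}. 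Your route is more explanatory in that each line of the formula has a visible geometric origin, but it requires the bookkeeping you flag---in particular, Case~1 needs the extra observation that $(\Lambda^\nabla_X Y)_{\mathcal V}=0$ for $X,Y\in\mathcal H$, which follows from your Case~4 together with skewness of $\Lambda^\nabla_X$, before \eqref{nablanablag} can identify the $\mathcal H$-part with $\Lambda^{g_N}_X Y$. The paper's route sidesteps all fundamental-versus-invariant and horizontal-lift issues at the price of checking two characterizing properties rather than one; the Koszul-type identity you propose as a fallback is in fact a repackaging of exactly that metric-plus-torsion verification.
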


\begin{proof}
We first prove that the torsion of $\Lambda^\nabla$ given by \eqref{lambdatorsion} agrees with the canonical torsion
\[
T^\nabla=2\alpha\sum_{i=1}^3 \eta_i\wedge\Phi_i^\mathcal{H}+2(\delta-4\alpha)\eta_{123}.
\]
We begin with the case $X,Y\in \mathcal{H}$. Evaluating \autoref{VCommFundamental} in the origin $T_0M\cong\mathfrak{m}$ we obtain $[X,Y]_\mathcal{V}=-2\alpha\sum_{i=1}^3\Phi_i(X,Y)\xi_i$. Thus,
\begin{align*}
\Lambda^\nabla_X Y-\Lambda^\nabla_YX-[X,Y]&=\Lambda^{g_N}_X Y-\Lambda^{g_N}_Y X-[X,Y]_\mathcal{H}-[X,Y]_\mathcal{V}\\
&=T^{g_N}(X,Y)+2\alpha\sum_{i=1}^3\Phi_i(X,Y)\xi_i=0+2\alpha\sum_{i=1}^3\Phi_i(X,Y)\xi_i=T^\nabla(X,Y)
\end{align*}
as the torsion $T^{g_N}=0$ of the Levi-Civita connection on the base vanishes.
Suppose now that $X=\xi_i$ and $Y\in\mathcal{H}$. Then
\[
\Lambda^\nabla_X Y-\Lambda^\nabla_YX-[X,Y]=[X,Y]-2\alpha\varphi_i Y-[X,Y]=-2\alpha\varphi_i Y=T^\nabla(X,Y).
\]
Finally if both $X=\xi_i,Y=\xi_j\in\mathcal{V}$ and $(ijk)$ an even permutation of $(123)$ we have
\begin{align*}
\Lambda^\nabla_X Y-\Lambda^\nabla_YX-[X,Y]&=\frac{\beta}{2\delta}[X,Y]-\frac{\beta}{2\delta}[Y,X]-[X,Y]=(2-\frac{4\alpha}{\delta}-1)[\xi_i,\xi_j]\\
&=(1-\frac{4\alpha}{\delta})2\delta\xi_k=T^\nabla(X,Y).
\end{align*}
We further need to verify that $\Lambda^\nabla_X\in\mathfrak{so}(\mathfrak{m})$ for all $X\in \mathfrak{m}$, that is $g(\Lambda^\nabla_XY,Z)+g(Y,\Lambda_X^\nabla Z)=0$ for all $Y,Z\in\mathfrak{m}$. Suppose $X\in\mathcal{H}$ and $Y,Z\in\mathcal{H}$. Then $\Lambda^\nabla_X=\Lambda^{g_N}_X$ and $g|_\mathcal{H}=g_N$ thus $\Lambda^\nabla_X$ is metric as $\Lambda^{g_N}_X$ is. If $Y\in\mathcal{V}$ and $Z\in\mathcal{H}$ we find that $\Lambda^\nabla_XY=0$ by definition while $\Lambda^\nabla_X Z\in\mathcal{H}$ is orthogonal to $Y$. Analogously, if $Y,Z\in\mathcal{V}$ the Nomizu map $\Lambda^\nabla_X$ acts trivially on both sides.
Now suppose $X\in\mathcal{V}$. Then by \cite[Corollary 2.3.1]{AgrDil} $X$ is Killing as a linear combination of the $\xi_i$ and thus the Lie derivative $L_X\in\mathfrak{so}(\mathfrak{m})$. In particular, if $Y,Z\in\mathcal{V}$ then $\Lambda^\nabla_X Y=\frac{\beta}{2\delta}L_X Y$ is metric. If $Y\in\mathcal{H}$ we have
\[
g(\Lambda^\nabla_XY,Z)=g(L_X Y-2\alpha\sum\eta_i(X)\varphi_iY,Z)=-g(Y,L_X Z-2\alpha\sum\eta_i(X)\varphi_i Z).
\]
If $Z\in \mathcal{H}$ the right hand side is just $-g(Y,\Lambda^\nabla_X Z)$ by definition while for $Z\in\mathcal{V}$
\[
L_X Z-2\alpha\sum\eta_i(X)\varphi_i Z\in\mathcal{V}
\]
is perpendicular to $Y$. Hence, $g(\Lambda^\nabla_X Y,Z)=0=-g(Y,\Lambda^\nabla_X Z)$.
\end{proof}
\begin{rem}
For symmetric spaces the Levi-Civita connection corresponds to the trivial Nomizu map $\Lambda^{g_N}=0$. The Nomizu map of the Alekseevsky base is given in \cite[Lemma 5, p. 35]{CortesNewConstr}.
\end{rem}
%
\subsection{Nomizu maps in the symmetric base case}
In the case of a positive homogeneous $3$-$(\alpha,\delta)$-Sasaki manifold or its non-compact sibling the Nomizu map $\Lambda^\nabla$ simplifies drastically.
\begin{prop}\label{lemma_canonical}
The canonical connection $\nabla$ of a homogeneous $3$-$(\alpha,\delta)$-Sasaki manifold over a Wolf space or its non-compact dual corresponds to the map
\[
\Lambda^\nabla_X=\begin{cases}
0, & X\in \mathcal{H}\\
\frac{\beta}{2\delta}\ad X, & X\in\mathcal{V}.
\end{cases}\]
\end{prop}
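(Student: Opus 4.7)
The plan is to specialize the general Nomizu-map formula from the previous theorem to the homogeneous models constructed in Theorem \ref{construction} (Wolf spaces and their non-compact duals). All four cases of the piecewise formula collapse to the two-line expression in the proposition; three of them are essentially immediate and only one needs a one-line identification.

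First I would address the horizontal-horizontal case. For $X,Y\in\mathcal{H}$ the previous theorem gives $\Lambda^\nabla_XY=\Lambda^{g_N}_XY$, where $\Lambda^{g_N}$ is the Nomizu map of the Levi-Civita connection on the base $N=G/G_0$. Since $G/G_0$ is a symmetric space with reductive decomposition $\mathfrak{g}=\mathfrak{g}_0\oplus\mathfrak{g}_1$ and $\mathcal{H}\cong\mathfrak{g}_1$, the bracket satisfies $[\mathcal{H},\mathcal{H}]\subset\mathfrak{g}_0$, so $[X,Y]_{\mathfrak{g}_1}=0$ and the standard formula for the Levi-Civita Nomizu map on a symmetric reductive space yields $\Lambda^{g_N}=0$. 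Thus $\Lambda^\nabla_XY=0$ on $\mathcal{H}\times\mathcal{H}$, in accordance with $\Lambda^\nabla_X=0$ for $X\in\mathcal{H}$. The case $X\in\mathcal{H}$, $Y\in\mathcal{V}$ is already zero by the previous theorem, finishing the $X\in\mathcal{H}$ branch.

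For the two vertical cases I would invoke the key algebraic feature of the construction, namely $\ad\xi_i|_{\mathcal{H}}=\delta\varphi_i$ and $\ad\xi_i|_{\mathcal{V}}=2\delta\varphi_i$, recorded in Theorem \ref{construction}. If $X,Y\in\mathcal{V}$, the previous theorem gives $\Lambda^\nabla_XY=\tfrac{\beta}{2\delta}[X,Y]=\tfrac{\beta}{2\delta}(\ad X)(Y)$, matching the claim on the nose. If $X\in\mathcal{V}$, $Y\in\mathcal{H}$, I would expand $X=\sum_i\eta_i(X)\xi_i$ and use $\ad\xi_i|_{\mathcal{H}}=\delta\varphi_i$ to compute
\[
[X,Y]=\delta\sum_{i=1}^{3}\eta_i(X)\varphi_iY.
\]
Plugging this into the previous theorem gives
\[
\Lambda^\nabla_XY=[X,Y]-2\alpha\sum_{i=1}^{3}\eta_i(X)\varphi_iY=(\delta-2\alpha)\sum_{i=1}^{3}\eta_i(X)\varphi_iY=\frac{\beta}{2\delta}[X,Y],
\]
where the last equality uses $\beta=2(\delta-2\alpha)$ and reintroduces the $\delta$ swallowed by $\varphi_i$. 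This is exactly $\frac{\beta}{2\delta}(\ad X)(Y)$, completing the vertical branch.

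I expect no genuine obstacle: the proof is essentially a bookkeeping exercise that the two defining properties of the construction (the base being symmetric, so $\Lambda^{g_N}=0$; and $\ad\xi_i$ acting as a rescaling of $\varphi_i$ on $\mathcal{H}$) are precisely what turn the piecewise formula of the general theorem into a single clean expression $\tfrac{\beta}{2\delta}\ad X$ on $\mathcal{V}$. The only step that requires a moment of attention is the bookkeeping of the factor $\delta$ between $\ad\xi_i$ and $\varphi_i$ in the mixed case $X\in\mathcal{V},Y\in\mathcal{H}$, and the use of $\beta=2(\delta-2\alpha)$ to reconcile the two expressions.
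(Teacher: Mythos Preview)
Your proposal is correct and follows essentially the same route as the paper: both reduce to the general Nomizu-map theorem, invoke $\Lambda^{g_N}=0$ because the base is symmetric, and handle the mixed case $X\in\mathcal{V},\,Y\in\mathcal{H}$ via the identity $\varphi_i|_{\mathcal H}=\tfrac{1}{\delta}\ad\xi_i$ together with $\beta=2(\delta-2\alpha)$. The only cosmetic difference is that the paper rewrites $2\alpha\sum_i\eta_i(X)\varphi_i|_{\mathcal H}$ directly as $\tfrac{2\alpha}{\delta}\ad X$ and factors $(1-\tfrac{2\alpha}{\delta})$, whereas you first express $[X,Y]$ in terms of $\varphi_i$; the computations are equivalent.
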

\begin{proof}
In the case of a Riemannian symmetric space the Levi--Civita connection agrees with the Ambrose--Singer connection. Thus, $\Lambda^{g_N}\equiv 0$. Now let $X\in\mathcal{V}$ and $Y\in\mathcal{H}$. Then
\[
2\alpha\sum_{i=1}^3 \eta_i(X) \varphi_i|_{\mathcal{H}}=\frac {2\alpha}\delta\sum_{i=1}^3 \eta_i(X) \ad{\xi_i}=\frac {2\alpha}\delta \ad X.
\]
It follows $\Lambda^\nabla_XY=(1-\frac{2\alpha}{\delta})[X,Y]=\frac{\beta}{2\delta}[X,Y]$.
\end{proof}
\begin{rem}
As noted in \autoref{Rem_Constr} the homogeneous $3$-$(\alpha,\delta)$-Sasaki space is naturally reductive if and only if $\beta=0$. In this case the Ambrose-Singer connection is metric. In fact, \autoref{lemma_canonical} shows that in this case the canonical and Ambrose-Singer connections agree.
\end{rem}
\begin{prop}
The Levi--Civita connection corresponds to the map
\[
\Lambda^g_X Y= \begin{cases}
\frac 12 [X,Y]_{\mathfrak{m}} & X,Y\in\mathcal{V} \text{ or } X,Y\in\mathcal{H}\\
(1-\frac\alpha\delta)[X,Y] & X\in\mathcal{V},Y\in\mathcal{H}\\
\frac \alpha\delta[X,Y] & X\in\mathcal{H},Y\in\mathcal{V}.\end{cases}
\]
\end{prop}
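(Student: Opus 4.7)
The plan is to exploit the fundamental identity $\nabla^g_X Y = \nabla_X Y - \tfrac12 T(X,Y)$ relating the Levi-Civita and canonical connections, which on the level of Nomizu maps reads
\[
\Lambda^g_X Y \; = \; \Lambda^\nabla_X Y - \tfrac12\, T(X,Y).
\]
I would then plug in Proposition \ref{lemma_canonical} for $\Lambda^\nabla$ and the explicit torsion formula \eqref{torsion02} and check the four cases separately, using the construction of $\varphi_i$ via $\tfrac1\delta\ad\xi_i$ on $\mathcal H$ from Theorem \ref{construction} to convert everything into Lie brackets.

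First, for $X,Y\in\mathcal H$: by Proposition \ref{lemma_canonical} $\Lambda^\nabla_XY=0$, and \eqref{torsion02} collapses to $T(X,Y)=2\alpha\sum_i\Phi_i(X,Y)\xi_i$ since $\eta_i(X)=\eta_i(Y)=0$. On the other hand, since the base $G/G_0$ is symmetric we have $[\mathcal H,\mathcal H]\subset\mathcal V\oplus\mathfrak h$, so $[X,Y]_{\mathfrak m}=[X,Y]_{\mathcal V}$. The computation $\mathrm d\eta_i(X,Y)=-\eta_i([X,Y])=2\alpha\Phi_i(X,Y)$ already carried out in the proof of Theorem \ref{construction} gives $[X,Y]_{\mathcal V}=-2\alpha\sum_i\Phi_i(X,Y)\xi_i$, whence $\Lambda^g_XY=-\alpha\sum_i\Phi_i(X,Y)\xi_i=\tfrac12[X,Y]_{\mathfrak m}$ as required. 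For $X=\xi_i\in\mathcal V,Y\in\mathcal H$, formula \eqref{torsion02} simplifies to $T(\xi_i,Y)=-2\alpha\varphi_iY$ (the $\Phi_l$-terms vanish because $\varphi_lY\in\mathcal H\perp\xi_i$), while Proposition \ref{lemma_canonical} yields $\Lambda^\nabla_{\xi_i}Y=[\xi_i,Y]-2\alpha\varphi_iY$; using $\varphi_iY=\tfrac1\delta[\xi_i,Y]$ gives $\Lambda^g_{\xi_i}Y=[\xi_i,Y]-\alpha\varphi_iY=(1-\tfrac\alpha\delta)[\xi_i,Y]$. The case $X\in\mathcal H,Y=\xi_i$ is entirely symmetric: $\Lambda^\nabla_XY=0$, $T(X,\xi_i)=2\alpha\varphi_iX$, and $\varphi_iX=-\tfrac1\delta[X,\xi_i]$ delivers $\Lambda^g_X\xi_i=\tfrac\alpha\delta[X,\xi_i]$.

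The remaining case $X,Y\in\mathcal V$ requires slightly more bookkeeping. Taking $X=\xi_i$, $Y=\xi_j$ with $(ijk)$ an even permutation, a short calculation from \eqref{torsion02} (or equivalently from the $3$-form expression \eqref{torsion01} evaluated on the triple $(\xi_i,\xi_j,\xi_k)$) gives $T(\xi_i,\xi_j)=(2\delta-8\alpha)\xi_k$, and Proposition \ref{lemma_canonical} gives $\Lambda^\nabla_{\xi_i}\xi_j=\tfrac{\beta}{2\delta}[\xi_i,\xi_j]=\tfrac{2(\delta-2\alpha)}{2\delta}\cdot 2\delta\xi_k=2(\delta-2\alpha)\xi_k$. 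Subtracting yields $\Lambda^g_{\xi_i}\xi_j=\delta\xi_k=\tfrac12[\xi_i,\xi_j]_{\mathfrak m}$, as claimed.

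I expect no serious obstacle; the only mildly delicate point is keeping the torsion signs straight in the $\mathcal V\times\mathcal V$ case, where several contributions to the $3$-form $T$ conspire to give the clean coefficient $\delta$. Once the identifications $\ad\xi_i|_{\mathcal H}=\delta\varphi_i|_{\mathcal H}$ from Theorem \ref{construction} and $[X,Y]_{\mathcal V}=-2\alpha\sum\Phi_i(X,Y)\xi_i$ for horizontal $X,Y$ are in hand, the verification is purely mechanical case distinction.
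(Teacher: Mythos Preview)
Your proposal is correct and follows essentially the same route as the paper: both start from $\Lambda^g_X Y = \Lambda^\nabla_X Y - \tfrac12 T(X,Y)$, plug in the canonical Nomizu map from Proposition~\ref{lemma_canonical} and the torsion formula, and check the four cases. The only cosmetic difference is that for $X,Y\in\mathcal H$ the paper verifies $\tfrac12 T(X,Y)=-\tfrac12[X,Y]_{\mathfrak m}$ by a direct Killing-form manipulation, whereas you invoke the equivalent identity $\mathrm d\eta_i(X,Y)=-\eta_i([X,Y])=2\alpha\Phi_i(X,Y)$ already established in the proof of Theorem~\ref{construction}.
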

\begin{proof}
Note that the correspondence is $\Lambda^\nabla_X Y=\nabla_X Y-[X,Y]$, for $X,Y\in\mathfrak{m}$, and the canonical connection is given by $\nabla=\nabla^g +\frac 12 T$ where the canonical torsion is given by \eqref{torsion01}, or equivalently \eqref{torsion02}.
Thus we have $\Lambda^g_X Y=\nabla^g_X Y-[X,Y]=\nabla_X Y-[X,Y] -\frac 12 T(X,Y)=\Lambda^\nabla_X Y-\frac 12 T(X,Y)$. Again we look at each case individually. Let $X,Y\in \mathcal{H}$. Then
\begin{align*}
\frac 12 T(X,Y)&=\alpha\sum_{i=1}^3 \Phi_i(X,Y)\xi_i=\alpha \sum_{i=1}^3g(X,\varphi_i Y)\xi_i\\
&=\frac \alpha\delta \sum_{i=1}^3 g(X,[\xi_i,Y])\xi_i=\frac{\alpha}{8\alpha\delta^2(n+2)}\sum_{i=1}^3\kappa(X,[Y,\xi_i])\xi_i\\
&= \frac{1}{8\delta^2(n+2)}\sum_{i=1}^3\kappa([X,Y],\xi_i)\xi_i\\
&=-\frac 12 \sum_{i=1}^3 g([X,Y]_\mathfrak{m},\xi_i)\xi_i=-\frac 12 [X,Y]_\mathfrak{m},
\end{align*}
where we have used that $\kappa(\mathfrak{h},\mathcal{V})=0$ and $[X,Y]_\mathfrak{m}\in\mathcal{V}=\mathrm{span}\{\xi_1,\xi_2,\xi_3\}$. Thus
\[
\Lambda^g_X Y=\Lambda^\nabla_X Y -\frac 12 T(X,Y)=\frac 12 [X,Y]_\mathfrak{m}
\]
For vertical vectors $X=\xi_i,Y=\xi_j$, $(ijk)$ an even permutation of $(123)$, we find
\begin{align*}
\Lambda^g_{\xi_i} \xi_j&=\Lambda^\nabla_{\xi_i} \xi_j-(\delta-4\alpha)\eta_{123}(\xi_i,\xi_j,\cdot)=\frac {\beta}{2\delta} [\xi_i,\xi_j]- (\delta-4\alpha)\xi_k\\
&=\beta\xi_k+(\delta-4\alpha)\xi_k=\delta\xi_k=[\xi_i,\xi_j]
\end{align*}
and by linearity for arbitrary $X,Y\in\mathcal{V}$. Let $X\in\mathcal{V},Y\in\mathcal{H}$ then
\begin{align*}
T(X,Y)&=2\alpha\sum_{i=1}^3\eta_i\wedge\Phi_i^{\mathcal{H}}(X,Y,\cdot\,)=2\alpha\sum_{i=1}^3\eta_i(X)\Phi_i^{\mathcal{H}}(Y,\cdot\,)\\
&=-2\alpha\sum_{i=1}^3\eta_i(X)\varphi_iY=-\frac {2\alpha}{\delta}\left[\sum_{i=1}^3\eta_i(X)\xi_i,Y\right]=-\frac{2\alpha}\delta [X,Y]
\end{align*}
and thus
\begin{align*}
\Lambda^g_X Y&=\Lambda^\nabla_X Y-\frac 12 T(X,Y)=(1-\frac {2\alpha}{\delta})[X,Y]+\frac\alpha\delta[X,Y]=(1-\frac{\alpha}{\delta})[X,Y].
\end{align*}
For the final expression $X\in\mathcal{H},Y\in\mathcal{V}$ use the above identity for $T$ with $X\leftrightarrow Y$. Then
\begin{align*}
\Lambda^g_X Y=\Lambda^\nabla_X Y-\frac 12 T(X,Y)&=\frac 12 T(Y,X)=\frac\alpha\delta [X,Y]. \qedhere
\end{align*}
\end{proof}

The Nomizu maps allow for a detailed investigation of homogeneous $3$-$(\alpha,\delta)$-Sasaki manifolds. A thorough discussion of curvature operators and properties will be carried out in an upcoming publication \cite{ADS21}.

%
%

\subsection*{Declarations}
No funding was received to assist with the preparation of this manuscript.
The authors have no relevant financial or non-financial interests to disclose.
Data sharing is not applicable to this article as no datasets were generated or analysed during the current study.


{\small

}

%
\noindent
Ilka Agricola and Leander Stecker, Fachbereich Mathematik und Informatik, 
Philipps-Universit\"at Marburg,
Campus Lahnberge, 35032 Marburg, Germany. \texttt{agricola@mathematik.uni-marburg.de}, \texttt{stecker@mathematik.uni-marburg.de}

\bigskip\noindent
Giulia Dileo, Dipartimento di Matematica, Universit\`a degli Studi di Bari Aldo Moro,
Via E. Orabona 4, 70125 Bari, Italy.
\texttt{giulia.dileo@uniba.it}


\begin{thebibliography}{99}
\bibitem[Ag06]{Ag}
{I. Agricola},
\emph{The Srn\'i lectures on non-integrable geometries with torsion},
Arch. Math.(Brno) \textbf{42} (2006), suppl., 5--84.
%
\bibitem[AD20]{AgrDil}
{I. Agricola, G. Dileo},
\emph{Generalizations of
$3$-Sasakian manifolds and  skew torsion}, Adv. Geom. 20, No. 3, 331-374 (2020).
%
%
\bibitem[ADS21]{ADS21}
{I. Agricola, G. Dileo, L. Stecker},
\emph{Curvature properties of $3$-$(\alpha,\delta)$-Sasaki manifolds}, to appear.
%
\bibitem[Ale75]{Alekseevsky}
{D.\ V.\ Alekseevsky},
\emph{Classification of quaternionic spaces with a transitive solvable group of motions},
Math. USSR lzvestija 9 (2) (1975) 297-339.
%
%
\bibitem[Be87]{BesseEinstein}
A.~Besse, \emph{Einstein manifolds}, Ergebnisse der Mathematik und ihrer
Grenzgebiete Bd.~10, Springer-Verlag Berlin-Heidelberg 1987.
%
%
\bibitem[Bi96]{Bielawski}
R.~Bielawski,
\emph{On the hyperk\"ahler metrics associated to singularities of nilpotent varieties},
Ann. Global Anal. Geom. \textbf{14} (1996), 177-191.
%
\bibitem[Bl10]{BLAIR}
{D. E. Blair},
\emph{Riemannian geometry of contact and symplectic manifolds. Second Edition}.
Progress in Mathematics \textbf{203}, Birkh\"auser, Boston, 2010.
%
\bibitem[BG08]{Boyer&Galicki}
{C. Boyer, K. Galicki},
\emph{Sasakian geometry}. Oxford Mathematical Monographs, Oxford Univ Press, Oxford, 2008.
%
%
\bibitem[BGM94]{BGM94}
C. Boyer, K. Galicki, B.  Mann,
\emph{The geometry and topology of 3-Sasakian manifolds},
J. Reine Angew. Math. \textbf{455} (1994), 183--220.
%
%
\bibitem[Co00]{CortesNewConstr}
{V. Cort\'es},
\emph{A new construction of homogeneous quaternionic manifolds and related geometric structures},
Mem. Am. Math. Soc. \textbf{147} (2000) 700.
%
\bibitem[CMS18]{CleyMorSemm}
{R. Cleyton, A. Moroianu, U. Semmelmann},
\emph{Metric connections with parallel skew-symmetric torsion},
 Adv. Math. \textbf{378} (2021), 107519, 50 pp.
%
%
\bibitem[DOP18]{Draperetall}
C. Draper, M. Ortega, F.J. Palomo,
\emph{Affine connections on 3-Sasakian homogeneous manifolds},
 Math. Z. 294 (2020), no. 1-2, 817–868.
%
\bibitem[FI02]{FrIv}
{T. Friedrich, S. Ivanov},
\emph{Parallel spinors and connections with skew-symmetric torsion in string theory},
Asian J. Math. \textbf{6} (2002),  303--335.
%
%
%
%
\bibitem[Ka01]{kashiwada}
T. Kashiwada, \emph{On a contact $3$-structure},
Math. Z. \textbf{238} (2001), 829--832.
%
\bibitem[KN69]{KobNom}
S. Kobayashi and K. Nomizu, \emph{Foundations of differential geometry {II}},
  Wiley Classics Library, Wiley Inc., Princeton, 1969, 1996.
%
\bibitem[Ko75]{konishi}
M. Konishi, \emph{On manifolds with Sasakian $3$-structure
over quaternion Kaehler manifolds}, Kodai Math. Semin. Rep. \textbf{26} (1975), 194--200.
%
%
\bibitem[Pe06]{Petersen}
P. Petersen, \emph{Riemannian Geometry}, Graduate Texts in Mathematics, Springer, 2006.
%
\bibitem[PV01]{PicVai}
P. Piccinni, I. Vaisman, \emph{Foliations with transversal quaternionic structures}, Ann. Mat. Pura Appl.
(4) 180 (2001), 303--330.
%
%
%
%
\bibitem[Ta96]{tanno96}
S. Tanno, \emph{Remarks on a triple of $K$-contact structures},
T\^{o}hoku Math. J., II. Ser. \textbf{48} (1996),  519--531.
%
%
\bibitem[Wa84]{Watson84}
B. Watson, \emph{Almost contact $3$-submersions}, Internat. J. Math. Math. Sci. \textbf{7} (1984),
667--688.
\end{thebibliography}
\end{document}